\newtheorem{theorem}{Theorem}[section]
\newtheorem{lemma}[theorem]{Lemma}
\newtheorem{proposition}[theorem]{Proposition}
\newtheorem{corollary}[theorem]{Corollary}
\newtheorem{definition}[theorem]{Definition}
\newtheorem{remark}[theorem]{Remark}
\newtheorem{example}[theorem]{Example}
\newtheorem{assumption}[theorem]{Assumption}
\DeclareMathOperator{\Id}{Id}
\DeclareMathOperator{\Fix}{Fix}
\DeclareMathOperator{\prox}{prox}
\DeclareMathOperator{\gra}{gra}
\DeclareMathOperator{\zer}{zer}
\DeclareMathOperator{\dom}{dom}
\DeclareMathOperator*{\argmin}{argmin}
\DeclareMathOperator{\dist}{dist}
\newcommand{\setto}{\rightrightarrows}
\title{Linear convergence of relocated fixed-point iterations}
\author[$^\ddagger$]{Felipe Atenas}
\author[*,$^\dagger$]{Farhana A. Simi}
\author[*]{Matthew K. Tam}
\affil[$^\ddagger$]{Centro de Modelamiento Matemático (CNRS IRL2807), Universidad de Chile, Santiago, Chile. Email: 
\href{mailto:fatenas@cmm.uchile.cl}{fatenas@cmm.uchile.cl}}
\affil[*]{School of Mathematics and Statistics, University of Melbourne, Parkville VIC 3010, Australia. Email: 
\href{mailto:fsimi@student.unimelb.edu.au}{fsimi@student.unimelb.edu.au}, \href{mailto:matthew.tam@unimelb.edu.au}{matthew.tam@unimelb.edu.au}
}
\affil[$^\dagger$]{Department of Mathematics, University of Dhaka, Bangladesh. 
Email:~\href{mailto:simi_96@du.ac.bd}{\texttt{simi\_96@du.ac.bd}}}
\begin{document}

\maketitle
\begin{abstract}
We establish linear convergence of relocated fixed-point iterations as introduced by Atenas et al.\ (2026) \href{https://doi.org/10.1137/25M1776810}{DOI: 10.1137/25M1776810} assuming the algorithmic operator satisfies a linear error bound. In particular, this framework applies to the setting where the algorithmic operator is a contraction. As a key application of our framework, we obtain linear convergence of the relocated Douglas--Rachford algorithm for finding a zero in the sum of two monotone operators in a setting with Lipschitz continuity and strong monotonicity assumptions. We also apply the framework to deduce linear convergence of variable stepsize resolvent splitting algorithms for multioperator monotone inclusions.
\end{abstract}

\paragraph*{Keywords.} fixed-point iterations, error bound, linear convergence, resolvent splitting.
\paragraph*{MSC2020.} 47H04, 47H05, 47H09, 65K10.

\section{Introduction}

Many algorithms in optimization can be formulated and analyzed as \emph{fixed-point iterations}; see, for example, \cite{krasnosel1955two,halpern1967fixed,fukushima1981generalized,polyak2007newton,rockafellar1976monotone,combettes2004solving}. Given an initial point $x_0$ in a real Hilbert space $\mathcal{H}$, a real parameter $\gamma\in\mathbb{R}_{++}$ and an operator $T_{\gamma}\colon\mathcal{H}\to\mathcal{H}$, its fixed-point iteration generates a sequence $(x_n)_{n\in\mathbb{N}}$ according to
\begin{equation}\label{eq:fixed-point-iteration}
x_{n+1} := T_{\gamma}x_n\quad\forall n\in\mathbb{N}. 
\end{equation}
Under suitable conditions on $T_\gamma$, the fixed-point sequence $(x_n)_{n\in\mathbb{N}}$  converges to a fixed-point of $T_\gamma$. For example, when $T_{\gamma}$ is  \emph{nonexpansive} and \emph{asymptotically regular}, Opial~\cite{MR211301} established convergence of $(x_n)_{n\in\mathbb{N}}$ in the weak topology.  In many situations, whether or not $T_\gamma$ satisfies these suitable conditions, performance depends on an appropriate choice of the parameter $\gamma$.

For example, consider the structured minimization problem
\begin{equation}\label{eq:f+g}
    \min_{x \in \mathcal{H}} f(x) + g(x),
\end{equation}
where $f: \mathcal{H} \to \mathbb{R}$ is smooth and $g: \mathcal{H} \to \mathbb{R} \cup \{+\infty\}$ is nonsmooth. The \emph{proximal gradient method} defines the sequence $(x_n)_{n\in\mathbb{N}}$ via \eqref{eq:fixed-point-iteration} with 
\begin{equation}\label{eq:T PG}
T_\gamma := \prox_{\gamma g}\circ\big( \Id - \gamma \nabla f),
\end{equation}
where $\prox_{\gamma g}\colon\mathcal{H}\to\mathcal{H}$ denotes the \emph{proximity operator} of $\gamma g$ defined by
$$ \prox_{\gamma g}:= \argmin_{y \in \mathcal{H}} \left\{ g(y) + \frac{1}{2\gamma}\|y- \cdot\|^2\right\}. $$
In the standard convex setting when $\nabla f$ is assumed to be $L$-Lipschitz continuous, the proximal gradient method converges to a fixed-point of \eqref{eq:T PG} so long as $\gamma \in (0,2/L)$, and the fixed-points of \eqref{eq:T PG} are the minimizers of $f+g$. When $L$ is not known, a suitable $\gamma$  can be computed via a linesearch. However, this does not directly fit within \eqref{eq:fixed-point-iteration} as this requires $\gamma$ to vary across iterations. For other settings where convergence is known, see \cite{lee2016gradient,jia2023convergence}. 

When $f$ and $g$ in \eqref{eq:f+g} are both nonsmooth, the \emph{Douglas--Rachford} method \cite{douglas1956numerical,lions1979splitting} can be used. This method can be described as the fixed-point iteration~\eqref{eq:fixed-point-iteration} given by the \emph{Douglas--Rachford operator} defined by
\begin{equation}\label{DR operator sec 1}
     T_{\gamma}:=\Id-\prox_{\gamma f}+\prox_{\gamma g}(2\prox_{\gamma f} - \Id).
 \end{equation} 
In the convex setting, the fixed-point sequence $(x_n)_{n\in\mathbb{N}}$ for the Douglas--Rachford method converges (weakly) to a point $x$ which is a fixed-point of the Douglas--Rachford operator~\cite{douglas1956numerical,svaiter2011weak}. However, in contrast to the proximal gradient method, the fixed-point $x$ does not solve the original problem, rather its \emph{shadow} $\prox_{\gamma f}(x)$ does. In fact, the fixed-point set of the Douglas--Rachford operator is known to be dependent on the stepsize $\gamma\in\mathbb{R}_{++}$, and for different parameters can even be disjoint \cite[Remark~4.13]{atenas2025relocated}. As a consequence, the standard approach to analyzing  fixed-point iterations (such as Opial~\cite{MR211301}) cannot be applied to the Douglas--Rachford method or its derivatives~\cite{davis2017convergence,rieger2020backward} when $\gamma$ is not constant. 

To address the issue of analyzing  fixed-point iterations with varying stepsize parameters, Atenas, Bauschke, Dao and Tam \cite{atenas2025relocated} introduced the concept of \emph{relocated fixed-point iterations}.  This framework is able to deal with fixed-point iterations defined by a parameterized family of operators. More precisely, given a set of parameters $\Gamma \subseteq \mathbb{R}_{++}$ and a family of operators $(T_{\gamma})_{\gamma \in \Gamma}$, a {relocated fixed-point iteration} is defined as \begin{equation*}
x_{n+1} = \mathcal{Q}_{\gamma_{n+1}\gets \gamma_n}T_{\gamma_n}x_n \quad \forall n\in\mathbb{N},
\end{equation*} where the operator $\mathcal{Q}_{\gamma_{n+1}\gets \gamma_n}\colon\mathcal{H}\to\mathcal{H}$ is called the \emph{fixed-point relocator} (see Definition~\ref{def: fixed-point relocator} below). For a convergent sequence $\gamma_n \to \gamma^*$ in $\Gamma$ that satisfies a summability condition, the authors show convergence of relocated fixed-point iterations to a point in the fixed-point set of $T_{\gamma^*}$ 
(see Theorem~\ref{thm:Convergence of fixed-point relocator} below). Importantly, this framework still applies to settings where the operators $(T_{\gamma_n})_{n\in\mathbb{N}}$ have no common fixed point, such as the Douglas--Rachford method. For the latter, it preserves the classical fixed-point step defined by the Douglas--Rachford operator \eqref{DR operator sec 1}, introducing variable stepsizes through fixed-point relocators. 

The relocated fixed-point iterations framework is an analytical, unifying tool for parametrized fixed-point iterations. It has been applied to obtain variable stepsize variants of (distributed) resolvent splitting methods, and extended in \cite{atenas2026variable} for (distributed) forward-backward methods. This framework provides explicit conditions that the sequence of variable stepsizes should satisfy to preserve the convergence guarantees of the constant stepsize counterparts, see, e.g., \cite[Section 5]{atenas2025relocated} and \cite[Sections 4 \& 5]{atenas2026variable}. In this manner, relocated fixed-point iterations constitute the theoretical base for the design of parameter tuning strategies that can enhance the numerical performance of methods that can be written as fixed-point iterations.

Briefly, let us mention other approaches in the literature that consider fixed-point iterations with varying operators. Given a family of operators $(T_n)_{n \in \mathbb{N}}$, it is common to consider a fixed-point iteration of the form
$$x_{n+1} = T_{n}x_n \quad\forall n \in \mathbb{N}. $$
In this context, Maul{\'e}n, Fierro and Peypouquet \cite{maulen2024inertial} examine inertial Krasnoselskii-Mann iterations with variable parameters, and Cegielski, Reich and Zalas \cite{cegielski2018regular} investigate fixed-point iterations under regularity assumptions on the family of operators. In both settings, it is assumed that the operators from $(T_n)_{n \in \mathbb{N}}$ have a common fixed-point which, as discussed in the previous paragraph, need not hold when $T_n $ is the operator   $T_{\gamma_n}$ in \eqref{DR operator sec 1} \cite[Remark 4.13]{atenas2025relocated}.  In the limit, the sequence $(x_n)_{n \in \mathbb{N}}$ is shown to converge to a point $x \in \cap_{n \in \mathbb{N}} \Fix T_n$. 

Regarding splitting methods specifically, Liang, Fadili and Peyr\'e \cite{liang2017local} propose a non-stationary Douglas--Rachford algorithm for convex minimization problems. The non-stationary nature of the method consists of replacing the constant stepsize of the classical Douglas--Rachford algorithm with a varying stepsize sequence. A different non-stationary variant of the Douglas--Rachford algorithm is proposed by Lorenz and Tran-Dinh \cite{lorenz2019non}, derived from a different iteration scheme, in which the steps defining the Douglas--Rachford operator are themselves modified, and the resulting method is analyzed in the more general setting of maximally monotone operators.  The non-stationary Douglas--Rachford algorithm \cite[eq. (12)]{lorenz2019non}  is a particular case of the relocated Douglas--Rachford algorithm (Algorithm~\ref{a:reloc-DR}); see \cite[Remark~4.12]{atenas2025relocated}. 
Although the approaches \cite{liang2017local,lorenz2019non,atenas2025relocated} differ in construction, their convergence analyses rely on similar assumptions on the stepsize sequences; see Remark~\ref{contrast to NDR} and \cite[Remark 4.10(iii)]{atenas2025relocated}.

Beyond the non-stationary Douglas--Rachford algorithm, adaptive stepsize rules have also been developed for the Alternating Direction Method of Multipliers (ADMM) \cite{gabay1976dual,glowinski1975approximation}, including the self-adaptive strategy of He, Yang and Wang \cite{he2000alternating}, and an adaptive stepsize rule based on spectral estimation by Xu, Figueiredo, Yuan, Studer and Goldstein \cite{xu2017adaptive}. Furthermore, the authors of \cite{lorenz2019non} deduce variable stepsize rules for ADMM from their analysis of the non-stationary Douglas--Rachford algorithm. In contrast, \cite{he2000alternating,xu2017adaptive} do not translate the adaptive stepsize rules from ADMM to the Douglas--Rachford framework (or vice versa) for general maximally monotone operators. Furthermore, Pedregosa and Gidel \cite{pedregosa2018adaptive} propose a variable stepsize variant of a three operator extension of the Douglas--Rachford algorithm, based on a backtracking procedure to induce a descent condition. For other splitting methods with variable parameters via backtracking, we refer the reader to \cite{malitsky2018first,giselsson2016line}.

In this paper, we investigate the convergence rate of relocated fixed-point iterations. In particular, we show $R$-linear convergence of relocated fixed-point iterations when the operators $(T_{\gamma})_{\gamma \in \Gamma}$ satisfy a regularity condition called \emph{bounded linear regularity}. A notable aspect of our analysis is that the underlying iteration is not \emph{Fej\'er monotone}. We then apply this general framework to obtain linear convergence of variable stepsize resolvent splitting methods. When the stepsize is constant, Giselsson and Boyd \cite{giselsson2016linear} previously obtained linear rates of convergence for the Douglas--Rachford algorithm in the optimization case assuming one of the functions is strongly convex with Lipschitz continuous gradient. Our analysis covers this setting as a particular instance of Corollary~\ref{cor: DR error bound}. In \cite{simi2025linear}, Simi and Tam also utilize model assumptions of similar nature to characterize the rate of convergence for a multioperator extension of the Douglas--Rachford method. In the nonmonotone case, Atenas \cite{A25} uses an error bound akin to (bounded) linear regularity to establish rates of convergence of the Douglas--Rachford method for nonconvex optimization problems.  

The remainder of this paper is organized as follows. In Section~\ref{section:preliminaries}, we present some preliminary results used in our analysis. In Section~\ref{section: 3}, we show our main results concerning linear convergence of relocated fixed-point iterations under bounded linear regularity of the family of operators. In Sections~\ref{s:DR} and~\ref{s:MT}, we use our results to derive linear convergence of the relocated Douglas--Rachford and Malitsky--Tam algorithms, respectively. Finally, concluding remarks are presented in Section~\ref{s:conclusion}.

\section{Preliminaries} \label{section:preliminaries}

Throughout this work, $\mathcal{H}$ denotes a real Hilbert space with inner product $\langle\cdot,\cdot\rangle$ and induced norm $\|\cdot\|$. A \emph{set-valued} operator is denoted $A:\mathcal{H}\setto\mathcal{H}$ and maps each point $x\in\mathcal{H}$ to a set $Ax\subseteq\mathcal{H}$. When $A$ is single-valued (\emph{i.e.,} $Ax$ is a singleton for all $x\in\mathcal{H})$, we write $A:\mathcal{H}\rightarrow\mathcal{H}$. The \emph{domain},  \emph{graph},  set of \emph{fixed-points} and set of \emph{zeros} of an operator $A\colon\mathcal{H}\setto\mathcal{H}$ are denoted $\dom A:=\{x\in\mathcal{H}:Ax\neq\emptyset\}$, 
$\gra A:=\{(x,u)\in \mathcal{H}\times\mathcal{H}:u\in Ax\}, \Fix A:=\{x\in \mathcal{H}:x\in Ax\}$, and $\zer A:=\{x\in \mathcal{H}:0\in Ax\}$, respectively. The \emph{identity operator} is denoted $\Id:\mathcal{H}\rightarrow \mathcal{H}$. The inverse of an operator $A:\mathcal{H}\setto\mathcal{H}$ is denoted $A^{-1}:\mathcal{H}\setto\mathcal{H}$, whose graph is defined as $\gra A^{-1}:=\{(y,x)\in\mathcal{H}\times\mathcal{H}:y\in Ax\}$. Observe that a direct calculation assures that for any $\gamma \in \mathbb{R}_{++}$, \begin{equation} \label{eq:escaled-inverse}
    (\gamma A)^{-1} = A^{-1}\circ (\gamma^{-1} \Id).
\end{equation}

An operator $A:\mathcal{H}\setto\mathcal{H}$ is said to be \emph{monotone} if 
$$\langle x-y,u-v\rangle\geq0\quad\forall(x,u),(y,v)\in\gra A,$$
and it is \emph{maximally monotone} if there exists no monotone operator whose graph properly contains $\gra A$.
 It is said to be \emph{$\mu$-strongly monotone} if $\mu>0$ and $$\langle x-y,u-v\rangle\geq\mu\|x-y\|^2\quad\forall(x,u),(y,v)\in\gra A.$$

 The \emph{resolvent} of an operator $A:\mathcal{H}\setto\mathcal{H}$ is defined by $J_{A}:=(\Id+A)^{-1}$. When $A:\mathcal{H}\setto\mathcal{H}$ is a maximally monotone operator, the resolvent $J_{A}$ is single-valued with full domain and $(1/2)$-averaged~\cite[Corollary~23.11]{bauschke2017convex}. 

\begin{remark}\label{remark: Lipschitz continuity in resolvent}
    Let $\gamma\in\mathbb{R}_{++}$. Since $(x,\gamma) \mapsto J_{\gamma A}x$ is Lipschitz continuous on compact subsets of $\mathcal{H} \times \mathbb{R}_{++}$~\cite[Proposition~3.4]{atenas2025relocated}, then it is also (sequentially) continuous. Indeed, let $(x,\gamma) \in \mathcal{H} \times \Gamma$, and $(x_n, \gamma_n) \to (x,\gamma)$. Then the set $K = \{(x_n,\gamma_n)\}_{k \in \mathbb{N}} \cup \{(x,\gamma)\}$ is compact \cite[Theorem 2.38]{aliprantis2006infinite}, and thus there exists $L >0$ such that \begin{equation*}
    \| J_{\gamma_n A}x_n - J_{\gamma A} x\| \leq L \|(x_n,\gamma_n) - (x,\gamma)\| \quad \forall n \in \mathbb{N}.
\end{equation*} Taking the limit as $n \to \infty$ yields $J_{\gamma_n A}x_n \to J_{\gamma A} x$. 
\end{remark}
 Given an ordered pair of operators $(A_1,A_2)$ on $\mathcal{H}$, the \emph{Attouch--Th\'era primal problem}~\cite{attouch1996general} is the inclusion given by
    \begin{equation}\label{primal problem}
        \text{find~}x\in\mathcal{H}\text{~s.t.~}0\in (A_1+A_2)x,
    \end{equation}
and the \emph{Attouch--Th\'era dual problem} is the inclusion given by
    \begin{equation}\label{dual problem}
        \text{find~}y\in\mathcal{H}\text{~s.t.~}0\in A_1^{-1}y-A_2^{-1}(-y).
    \end{equation}
Note that the problems \eqref{primal problem} and \eqref{dual problem} are equivalent in the following sense: if $x$ solves \eqref{primal problem}, then there exists $y\in A_1x$ that solves \eqref{dual problem}. Conversely, if $y$ solves \eqref{dual problem}, then there exists $x\in A_1^{-1}y$ that solves \eqref{primal problem}. For further details, see~\cite[Corollary 3.2]{attouch1996general}. Attouch-Th{\'e}ra duality recovers \emph{Fenchel duality} from convex analysis under appropriate constraint qualifications \cite[Theorem~4.2]{attouch1996general}. 

A monotone operator $A\colon\mathcal{H}\setto\mathcal{H}$ is said to be \emph{paramonotone} if
\begin{equation}\label{def of paramonotone}
(x,u),(y,v)\in\gra A \text{~and~} \langle x-y,u-v\rangle=0\implies (x,v),(y,u)\in\gra A.
\end{equation}
Paramonotonicity is an important notion in the study of interior point methods and duality theory \cite{iusem1998some,bauschke2014rectangularity}. In particular, the subdifferential of a convex function $f$ is always paramonotone \cite[Proposition~2.2]{iusem1998some}. If $A=\partial f$ and two points $(x,u),(u,v)\in\gra A$ satisfy $\langle x-y,u-v\rangle=0$, then paramonotonicity implies that the same supporting hyperplane touches the epigraph of $f$ at both $(x,f(x))$ and $(y,f(y))$. More generally, paramonotonicity can be viewed as intermediate property between monotonicity and strict monotonicity. For further details, we refer the reader to \cite{iusem1998some,bauschke2014rectangularity,censor1998interior}.

When $A_1$ and $A_2$ are paramonotone operators, and $T_\gamma$ is the Douglas--Rachford operator defined as
\begin{equation}\label{DR operator in pri}
    T_{\gamma}: = \Id-J_{\gamma A_1}+J_{\gamma A_2}(2J_{\gamma A_1}-\Id),
    \end{equation}
then the fixed-points of $T_\gamma$ can be expressed as the sum of primal and rescaled dual solutions, as shown in the following lemma (cf. \cite[Proposition~1]{pena2021linear}).

\begin{proposition}\label{p:T_gamma-decomp}
    Let $\gamma \in \mathbb{R}_{++}$ and $A_1,A_2 \colon\mathcal{H}\setto\mathcal{H}$ be maximally monotone. Consider the Attouch--Th\'era primal-dual pair of problems
    \begin{equation*}
    \begin{aligned}
        \text{find~}x &\in \mathcal{P}_\gamma := \zer(\gamma A_1+ \gamma A_2), \\
        \text{find~}y &\in  \mathcal{D}_\gamma := \zer((\gamma A_1)^{-1}-(\gamma A_2)^{-1}\circ (-\Id)).
    \end{aligned}
    \end{equation*}
    Denote $\mathcal{P} := \mathcal{P}_1$ and $\mathcal{D} := \mathcal{D}_1$. Then $\mathcal{P} = \mathcal{P}_\gamma$ and $\gamma\mathcal{D} = \mathcal{D}_\gamma$. If, in addition,  $T_\gamma$ is the Douglas--Rachford operator \eqref{DR operator in pri} and $A_1$ and $A_2$ are paramonotone, then \begin{equation} \label{eq:para-duality}
    \Fix T_\gamma = \mathcal{P} + \gamma\mathcal{D}. 
\end{equation}
\end{proposition}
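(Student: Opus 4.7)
The plan is to dispatch the first two assertions by direct manipulation and devote most of the work to the decomposition $\Fix T_\gamma = \mathcal{P} + \gamma\mathcal{D}$. For $\mathcal{P} = \mathcal{P}_\gamma$, the identity $\gamma A_1 + \gamma A_2 = \gamma(A_1+A_2)$ combined with $\gamma > 0$ makes the two zero-inclusions equivalent. For $\gamma\mathcal{D} = \mathcal{D}_\gamma$, I would apply \eqref{eq:escaled-inverse} to write $(\gamma A_i)^{-1} = A_i^{-1}\circ (\gamma^{-1}\Id)$ and substitute $z = \gamma^{-1}y$; the defining inclusion of $\mathcal{D}_\gamma$ then becomes $0 \in A_1^{-1}z - A_2^{-1}(-z)$, so $y \in \mathcal{D}_\gamma$ iff $z := \gamma^{-1}y \in \mathcal{D}$.

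For the decomposition, I would first characterize $\Fix T_\gamma$ concretely by unfolding the resolvent form of \eqref{DR operator sec 1} in the monotone setting, namely $T_\gamma = \Id - J_{\gamma A_1} + J_{\gamma A_2}(2 J_{\gamma A_1} - \Id)$ with $J_{\gamma A_i} := (\Id + \gamma A_i)^{-1}$. A point $x \in \Fix T_\gamma$ satisfies $J_{\gamma A_1}x = J_{\gamma A_2}(2J_{\gamma A_1}x - x)$; setting $p := J_{\gamma A_1}x$ and $y := (x - p)/\gamma$, the resolvent identities yield $y \in A_1 p$, and after simplifying $2p - x = p - \gamma y$ the second equation gives $-y \in A_2 p$. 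A short converse calculation shows the equivalence
\[
\Fix T_\gamma = \{\, p + \gamma y : y \in A_1 p \text{ and } -y \in A_2 p\,\}.
\]
The inclusion $\Fix T_\gamma \subseteq \mathcal{P} + \gamma\mathcal{D}$ is then immediate: such a $p$ satisfies $0 \in A_1 p + A_2 p$, so $p \in \mathcal{P}$, and $p \in A_1^{-1}y \cap A_2^{-1}(-y)$ yields $0 \in A_1^{-1}y - A_2^{-1}(-y)$, so $y \in \mathcal{D}$.

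The reverse inclusion, where paramonotonicity enters, is the main obstacle. Given $p \in \mathcal{P}$ and $y \in \mathcal{D}$, choose certificates $u \in A_1 p$ with $-u \in A_2 p$, and $q$ with $y \in A_1 q$ and $-y \in A_2 q$. Monotonicity of $A_1$ applied to $(p,u),(q,y)$ gives $\langle p-q, u-y\rangle \geq 0$, while monotonicity of $A_2$ applied to $(p,-u),(q,-y)$ gives $\langle p-q, u-y\rangle \leq 0$, forcing equality. Paramonotonicity of $A_1$ and $A_2$ then swaps the second coordinates of these pairs to produce $y \in A_1 p$ and $-y \in A_2 p$, which by the above characterization places $p + \gamma y$ in $\Fix T_\gamma$. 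The crux of the proof is this certificate exchange: without paramonotonicity one cannot conclude that the \emph{same} primal solution $p$ pairs with the chosen dual certificate $y$ to produce a fixed point, so the hypothesis is used in an essential way via the equality case of monotonicity.
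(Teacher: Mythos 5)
Your proof is correct, but it reaches \eqref{eq:para-duality} by a genuinely different route than the paper. For the scaling identities $\mathcal{P}=\mathcal{P}_\gamma$ and $\gamma\mathcal{D}=\mathcal{D}_\gamma$ you argue exactly as the paper does, via \eqref{eq:escaled-inverse} and the substitution $z=\gamma^{-1}y$. For the decomposition itself, however, the paper does not argue from scratch: it notes that $T_\gamma$ in \eqref{DR operator sec 1} is the (unit-stepsize) Douglas--Rachford operator for the pair $(\gamma A_1,\gamma A_2)$, that positive scaling preserves paramonotonicity, and then invokes \cite[Corollary~5.5(iii)]{bauschke2012attouch} to obtain $\Fix T_\gamma=\mathcal{P}_\gamma+\mathcal{D}_\gamma$, which combines with the first part. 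You instead reprove that cited result in this setting: the resolvent characterization $\Fix T_\gamma=\{p+\gamma y:\, y\in A_1p,\ -y\in A_2p\}$, the immediate forward inclusion, and, for the reverse inclusion, the equality case forced by monotonicity of $A_1$ and $A_2$ followed by the paramonotone ``certificate exchange'' giving $y\in A_1p$ and $-y\in A_2p$; this is exactly where paramonotonicity is essential, and your identification of that point matches the intent of the cited lemma. Your argument is sound (the degenerate case where $\mathcal{P}$ or $\mathcal{D}$ is empty is handled automatically by the forward inclusion, which forces $\Fix T_\gamma=\emptyset$), and what it buys is a self-contained proof; what the paper's route buys is brevity, importing the Attouch--Th\'era duality machinery rather than redoing it.
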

\begin{proof}
    First, from definition, it holds that $\mathcal{P}:=\zer(A_1+A_2) = \zer(\gamma A_1 + \gamma A_2)$. In view of \eqref{eq:escaled-inverse}, we therefore have $ y\in \mathcal{D}_\gamma\iff 0 \in A_1^{-1}(\gamma^{-1}y) - A_2^{-1}(-\gamma^{-1}y)\iff 0 \in A_1^{-1}w - A_2^{-1}(-w)$ and $y = \gamma w\iff y \in \gamma  \mathcal{D}$.

Next, observe that the operator $T_\gamma$ in \eqref{DR operator in pri} is associated with the Attouch--Th\'era primal problem $0 \in (\gamma A_1 + \gamma A_2)x$, and the Attouch--Th\'era dual problem $0 \in (\gamma A_1)^{-1}y - (\gamma A_2)^{-1}(-y)$. When $A_1$ and $A_2$ are paramonotone, then $\gamma A_1$ and $\gamma A_2$ are also paramonotone. Since $\gamma A_1$ and $\gamma A_2$ are paramonotone, and noting that $\mathcal{P}=\mathcal{P}_\gamma$ and $\gamma\mathcal{D}=\mathcal{D}_\gamma$, it follows from \cite[Corollary~5.5(iii)]{bauschke2012attouch} that $\Fix T_\gamma=\mathcal{P}_{\gamma}+\mathcal{D}_{\gamma}=\mathcal{P}+\gamma\mathcal{D}.$
\end{proof}

A single-valued operator $T:\mathcal{H}\rightarrow\mathcal{H}$ is said to be \emph{$\beta$-Lipschitz} if $\beta\geq0$ and $$\|Tx-Ty\|\leq\beta\|x-y\|\quad\forall x,y\in\mathcal{H}. $$
An operator is said to be a \emph{$\beta$-contraction} if it is $\beta$-Lipschitz with $\beta\in[0,1)$. It is said to be  \emph{nonexpansive} if it is $1$-Lipschitz, and  \emph{$\alpha$-averaged nonexpansive} if $\alpha\in(0,1)$, and
$$\|Tx-Ty\|^2+\frac{1-\alpha}{\alpha}\|(\Id-T)x-(\Id-T)y\|^2\leq\|x-y\|^2\quad\forall x,y\in\mathcal{H}.$$

Given a nonempty closed convex set $C \subseteq \mathcal{H}$, $\dist(\cdot,C)\colon\mathcal{H}\to\mathbb{R}$ denotes the \emph{distance to $C$} given by $\dist(x,C) := \inf_{y \in C} \|y-x\|$ for all $x \in \mathcal{H}$. Furthermore, $P_C\colon\mathcal{H}\to C$ denotes the \emph{projection operator onto $C$} which maps each point $x\in\mathcal{H}$ to $P_Cx$, the unique point in $C$ such that $\|x - P_Cx\| = \dist(x,C)$. It holds that $P_C = J_{N_C}$ (see, e.g., \cite[Example~23.4]{bauschke2017convex}), where $N_C : \mathcal{H}\setto\mathcal{H}$ is the \emph{normal cone to $C$}, defined as $N_C(x) = \{ z \in \mathcal{H}: \langle z , y -x  \rangle \leq 0 \,\forall y \in C \}$ for $x \in C$, and $N_C(x) = \emptyset$ otherwise.

In \cite[Section~3.1]{pena2021linear}, the authors introduce the notion of strong convexity of a function relative to a set. 
In this work, we define the concept of strong monotonicity of an operator relative to a set. The latter reduces to the former when the operator is the subdifferential of a convex function.
\begin{definition}[Strongly monotone relative to a set] \label{d:relative-to-a-set}
    Let $A:\mathcal{H}\setto\mathcal{H}$ be a set-valued 
    operator and $X\subseteq\dom A$ be a nonempty closed convex set. The operator $A$ is said to be \emph{$\mu$-strongly monotone relative to $X$} if
    \begin{equation*}\label{eq:relative strong monotonicity}
\langle v - u, y - P_X(y) \rangle \ge \mu \|y - P_X(y)\|^2\quad \forall (y, v) \in \operatorname{gra} A,\, u \in (A\circ P_X)y.
\end{equation*} 
\end{definition}
Similar to the observation in \cite[p.~10]{pena2021linear}, if $A$ is $\mu$-strongly monotone, then it is $\mu$-strongly monotone relative to any nonempty closed convex set $X\subseteq\dom A$. And, if $A=K^*BK$ where $B$ is $\mu$-strongly monotone and $K^*$ is a bounded linear operator, then $A$ is $\mu(\sigma^+_{\min}(K))^2$-strongly monotone relative to $K^{-1}(X)\subseteq\dom A$ for any nonempty closed convex subset $X\subseteq\dom B$, where $\sigma_{\min}^+(K)$ denotes the smallest positive singular value of $K$; see \cite[Lemma~4]{pena2021linear}. The following result characterizes strong monotonicity relative to a set of operators of the form $\gamma A$ and its inverse $(\gamma A)^{-1}$.

\begin{proposition} \label{p:rel-to-a-set} Let $\gamma \in \mathbb{R}_{++}$ and $A:\mathcal{H}\setto\mathcal{H}$ be a set-valued operator. Suppose $X\subseteq\dom A$ and $Y\subseteq\dom A^{-1}$ are nonempty closed convex sets. Then the following assertions hold.
    \begin{enumerate}[(i)]
        \item \label{p:gamma-rel-mon} If $A$ is $\mu$-strongly monotone relative to  $X$, then $\gamma A$ is $\gamma\mu$-strongly monotone relative to  $X$.
        \item \label{p:inverse-rel-mon}If $A^{-1}$ is $\mu$-strongly monotone relative to $Y$, then $(\gamma A)^{-1}$ is $\gamma^{-1}\mu$-strongly monotone relative to  $\gamma Y$.
    \end{enumerate}
\end{proposition}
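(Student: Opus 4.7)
The plan is to verify Definition~\ref{d:relative-to-a-set} in each case by pulling the inequality back to the operator $A$ (respectively $A^{-1}$) via the relationship between the graphs under positive scaling.

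For (i), the argument is a one-line scaling. Note first that $\dom(\gamma A)=\dom A\supseteq X$. Given $(y,v')\in\gra(\gamma A)$ and $u'\in (\gamma A\circ P_X)y$, write $v'=\gamma v$ and $u'=\gamma u$ with $(y,v)\in\gra A$ and $u\in (A\circ P_X)y$. Then
$$\langle v'-u',\,y-P_X(y)\rangle = \gamma\langle v-u,\,y-P_X(y)\rangle\ge \gamma\mu\|y-P_X(y)\|^2,$$
where the last inequality uses the hypothesis on $A$, yielding $\gamma\mu$-strong monotonicity of $\gamma A$ relative to $X$.

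For (ii), I would combine \eqref{eq:escaled-inverse} with the elementary projection identity $P_{\gamma Y}(y)=\gamma P_Y(\gamma^{-1}y)$, which follows from the substitution $z=\gamma w$ in $\argmin_{z\in\gamma Y}\|z-y\|^2=\gamma^2\argmin_{w\in Y}\|w-\gamma^{-1}y\|^2$. Using \eqref{eq:escaled-inverse}, one checks that $\dom(\gamma A)^{-1}=\gamma\dom A^{-1}\supseteq\gamma Y$, and that $(y,v)\in\gra(\gamma A)^{-1}$ if and only if $(\gamma^{-1}y,v)\in\gra A^{-1}$. Similarly, any element $u\in((\gamma A)^{-1}\circ P_{\gamma Y})y=A^{-1}(\gamma^{-1}P_{\gamma Y}(y))$ satisfies $u\in (A^{-1}\circ P_Y)(\gamma^{-1}y)$ by the projection identity. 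Applying the relative strong monotonicity of $A^{-1}$ at the pair $(\gamma^{-1}y,v)$ together with $u$ gives
$$\langle v-u,\,\gamma^{-1}y - P_Y(\gamma^{-1}y)\rangle \ge \mu\|\gamma^{-1}y-P_Y(\gamma^{-1}y)\|^2.$$
Multiplying both sides by $\gamma$ and invoking the projection identity once more produces
$$\langle v-u,\,y-P_{\gamma Y}(y)\rangle \ge \gamma^{-1}\mu\|y-P_{\gamma Y}(y)\|^2,$$
which is the required bound.

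There is no substantive obstacle: everything reduces to careful bookkeeping between the scaled graphs once the projection identity $P_{\gamma Y}(y)=\gamma P_Y(\gamma^{-1}y)$ is in hand. The only feature worth flagging is that the constant degrades from $\mu$ to $\gamma^{-1}\mu$ in (ii), which is consistent with the dimensional scaling of the inverse operator and the domain $\gamma Y$.
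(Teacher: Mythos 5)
Your proof is correct and follows essentially the same route as the paper: part (i) is the same one-line rescaling of the graph, and part (ii) uses \eqref{eq:escaled-inverse} together with the projection identity $P_{\gamma Y}=\gamma P_Y\circ(\gamma^{-1}\Id)$ (which the paper simply cites) before applying the hypothesis at $(\gamma^{-1}y,v)$ and multiplying by $\gamma$. The only difference is cosmetic bookkeeping, such as your self-contained justification of the projection identity.
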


\begin{proof}
(\ref{p:gamma-rel-mon})~Let $(y,v)\in\gra(\gamma A)$ and $u \in ((\gamma A) \circ P_X)y$. Then $(y,\gamma^{-1}v)\in\gra A$ and $\gamma^{-1} u \in (A \circ P_X)y$. Since $A$ is $\mu$-strongly monotone relative to $X$, we have
\begin{equation*}
            \langle \gamma^{-1}v - \gamma^{-1}u, y - P_X(y) \rangle \ge \mu \|y - P_X(y)\|^2.
        \end{equation*}
        Multiplying both sides by $\gamma \in \mathbb{R}_{++}$ yields the claimed result.

(\ref{p:inverse-rel-mon})~Let $(y,v)\in\gra(\gamma A)^{-1}$ and $u \in ((\gamma A)^{-1} \circ P_{\gamma Y})y$. From \eqref{eq:escaled-inverse} and $P_{\gamma Y} = \gamma P_Y \circ (\gamma^{-1} \Id)$ \cite[Proposition~29.1]{bauschke2017convex}, it follows that $(\gamma^{-1}y,v)\in\gra A^{-1}$ and

$u\in (A^{-1}\circ P_Y)(\gamma^{-1}y)$. Since $A^{-1}$ is $\mu$-strongly monotone relative to $Y$, we have 
\begin{equation*}
            \langle v - u, \gamma^{-1}y - P_Y(\gamma^{-1}y) \rangle \ge \mu \|\gamma^{-1}y - P_Y(\gamma^{-1}y)\|^2
\end{equation*}
Multiplying both sides by $\gamma \in \mathbb{R}_{++}$ followed by using the identity $P_{\gamma Y} = \gamma P_Y \circ (\gamma^{-1} \Id)$ gives
\begin{equation*}
            \langle v - u, y - P_{\gamma Y}(y) \rangle \ge  \gamma^{-1}\mu \|y - P_{\gamma Y}(y)\|^2,
        \end{equation*}
which proves the claim.
\end{proof}

In the following, we recall the definition of fixed-point relocators for a family of operators. Fixed-point relocators play a crucial role in the convergence analysis introduced in \cite{atenas2025relocated}, as they map fixed-point sets to fixed-point sets when the stepsize parameter varies alongside iterations. More specifically, given $\delta,\gamma >0$, a fixed-point relocator is an operator $\mathcal{Q}_{\delta\gets\gamma}$ for an algorithmic operator $T_\gamma$ that, in particular,  satisfies:  for any $x \in \Fix T_\gamma$, it holds $\mathcal{Q}_{\delta\gets\gamma}x \in \Fix T_\delta$. Namely, $\mathcal{Q}_{\delta\gets\gamma}$ relocates fixed-points of $T_\gamma$ to fixed-points of $T_\delta$. Definition~\ref{def: fixed-point relocator} formalizes this idea, and also states continuity properties necessary for the convergence results presented in Theorem~\ref{thm:Convergence of fixed-point relocator}.
\begin{definition}[{Fixed-point relocator \cite[Definition~4.1]{atenas2025relocated}}]\label{def: fixed-point relocator}
    Let $\Gamma\subseteq\mathbb{R}_{++}$ be a nonempty set and let $(T_{\gamma})_{\gamma\in\Gamma}$ be a family of operators from $\mathcal{H}$ to $\mathcal{H}$. The family of operators $(\mathcal{Q}_{\delta\leftarrow\gamma})_{\delta,\gamma\in\Gamma}$ on $\mathcal{H}$ is said to be \emph{fixed-point relocators} for $(T_{\gamma})_{\gamma\in\Gamma}$ with Lipschitz constants $(\mathcal{L_{\delta\leftarrow\gamma}})_{\delta,\gamma\in\Gamma}$ in $[1,\infty)$ if the following hold:
\begin{enumerate}[(i)]
    \item\label{def i} $(\forall \gamma,\delta\in\Gamma)$ $\mathcal{Q}_{\delta\leftarrow\gamma|\Fix T_\gamma}$ is a bijection from $\Fix T_\gamma$ to $\Fix T_\delta$.
    \item\label{def ii} $(\forall \gamma\in\Gamma)(\forall x\in\Fix T_{\gamma})$ the map $\delta\mapsto\mathcal{Q}_{\delta\leftarrow\gamma}x$ from $\Gamma$ to $\mathcal{H}$ is continuous.
    \item\label{def iii} $(\forall \gamma,\delta,\epsilon\in\Gamma)(\forall x\in\Fix T_{\gamma})$ $\mathcal{Q}_{\epsilon\leftarrow\delta}\mathcal{Q}_{\delta\leftarrow\gamma}x=\mathcal{Q_{\epsilon\leftarrow\gamma}}x$.
    \item\label{def iv} $(\forall \gamma,\delta\in\Gamma)$ $\mathcal{Q}_{\delta\leftarrow\gamma}$ is $\mathcal{L_{\delta\leftarrow\gamma}}$-Lipschitz continuous.
\end{enumerate}
\end{definition}
\begin{remark}\label{remark: identity relocator}
    For any $\epsilon\in\Gamma$, the relocator $\mathcal{Q}_{\epsilon\leftarrow\epsilon}$ coincides with the identity operator on $\Fix T_\epsilon$. To see this, let $\gamma,\epsilon\in\Gamma$ and $y\in\Fix T_{\epsilon}$. Then $x:=\mathcal{Q}_{\gamma\leftarrow\epsilon}y\in\Fix T_{\gamma}$ due to Definition~\ref{def: fixed-point relocator}(\ref{def i}). Applying Definition~\ref{def: fixed-point relocator}(\ref{def iii}) yields $\mathcal{Q}_{\epsilon\leftarrow\epsilon}y=\mathcal{Q}_{\epsilon\leftarrow\epsilon}\mathcal{Q}_{\epsilon\leftarrow\gamma}x=\mathcal{Q}_{\epsilon\leftarrow\gamma}x=y$, establishing the claim.
\end{remark}

 To set ideas, we provide the prototypical example of a fixed-point relocator presented in \cite{atenas2025relocated}, the fixed-point relocator of the Douglas--Rachford algorithmic operator \eqref{DR operator in pri}.

\begin{example}[Fixed-point relocator of Douglas--Rachford operator] \label{ex:FPR-DR}
    Let $\Gamma\subseteq\mathbb{R_{++}}$ be nonempty, $(T_\gamma)_{\gamma\in\Gamma}$ be the family of Douglas--Rachford algorithmic operators  defined in \eqref{DR operator in pri}. The operator given by
\begin{equation}\label{eq-NDR:relocator DR}
    \mathcal{Q}_{\delta\leftarrow\gamma}x=\frac{\delta}{\gamma}x+\left(1-\frac{\delta}{\gamma}\right)J_{\gamma A_1}x \quad\forall \delta, \gamma \in \mathbb{R}_{++}, \; \forall x \in \mathcal{H},
\end{equation} defines a fixed-point relocator for $T_\gamma$
with Lipschitz constant $\mathcal{L}_{\delta\leftarrow\gamma}=\max\{1,\frac{\delta}{\gamma}\}$ \cite[Lemma 4.7]{atenas2025relocated}.  In particular, $\mathcal{Q}_{\delta\leftarrow\gamma}^{-1} = \mathcal{Q}_{\gamma\leftarrow\delta}$ \cite[Lemma 3.1]{atenas2025relocated}, and $\mathcal{Q}_{\delta\leftarrow\gamma}(\Fix T_\gamma) = \Fix T_\delta$  \cite[Theorem 3.5]{atenas2025relocated}.
\end{example}
We recall the following convergence result for relocated fixed-point iterations. 

\begin{theorem}[Convergence of relocated fixed-point iterations {\cite[Theorem 4.5]{atenas2025relocated}}] \label{thm:Convergence of fixed-point relocator}
 Let $\Gamma \subseteq \mathbb{R}_{++}$ be nonempty,  $(T_{\gamma})_{\gamma\in\Gamma}$ be a family of $\alpha$-averaged nonexpansive operators on $\mathcal{H}$ with  $\Fix T_{\gamma} \neq\varnothing$ for all $\gamma\in\Gamma$, and $(\mathcal{Q}_{\delta\gets \gamma})_{\gamma,\delta \in \Gamma}$ be fixed-point relocators for $(T_{\gamma})_{\gamma \in \Gamma}$ with Lipschitz constants $(\mathcal{L}_{\delta\gets \gamma})_{\gamma,\delta \in \Gamma} $ in $[1,+\infty[$. 
 Suppose the map $\mathcal{H}\times \Gamma\to \mathcal{H}\colon (x,\gamma)\mapsto T_{\gamma}x$ is continuous, $(\gamma_n)_{n\in\mathbb{N}} \subseteq \Gamma$ converges $\gamma^*\in \Gamma$, and  $\sum_{n\in\mathbb{N}} (\mathcal{L}_{\gamma_{n+1}\gets \gamma_n} -1) < +\infty.$
 Let $x_0 \in \mathcal{H}$ and generate a sequence $(x_n)_{n\in\mathbb{N}}$ according to
\begin{equation} \label{e:FPR-iteration}
x_{n+1} := \mathcal{Q}_{\gamma_{n+1}\gets \gamma_n}T_{\gamma_n}x_n \quad \forall n\in\mathbb{N}.
\end{equation} Then $(x_n-T_{\gamma_n}x_n)_{n\in\mathbb{N}}$ converges strongly to zero, and $(x_n)_{n\in\mathbb{N}}$ converges weakly to some $x \in \Fix T_{\gamma^*}$.
\end{theorem}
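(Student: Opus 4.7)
The plan is to follow the classical template for weak convergence of fixed-point iterations of nonexpansive operators, adapted to the nonstationary setting here, by combining a sliding reference sequence of fixed-points with a quasi-Fej\'er monotonicity inequality and an Opial-type identification of the weak limit. First, fix an arbitrary $y^*\in\Fix T_{\gamma^*}$ and define the reference sequence $y_n:=\mathcal{Q}_{\gamma_n\gets\gamma^*}y^*$. Property~(\ref{def i}) of Definition~\ref{def: fixed-point relocator} gives $y_n\in\Fix T_{\gamma_n}$; property~(\ref{def iii}) together with Remark~\ref{remark: identity relocator} gives $y_{n+1}=\mathcal{Q}_{\gamma_{n+1}\gets\gamma_n}y_n$; and property~(\ref{def ii}) applied at $\gamma=\gamma^*$ with $\gamma_n\to\gamma^*$ gives $y_n\to y^*$ strongly.

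Combining the Lipschitz property~(\ref{def iv}) with the $\alpha$-averaged inequality for $T_{\gamma_n}$ applied at $(x_n,y_n)$ (valid since $y_n\in\Fix T_{\gamma_n}$), I would derive
\[
\|x_{n+1}-y_{n+1}\|^2 \leq \mathcal{L}^2_{\gamma_{n+1}\gets\gamma_n}\Bigl(\|x_n-y_n\|^2 - \tfrac{1-\alpha}{\alpha}\|x_n-T_{\gamma_n}x_n\|^2\Bigr).
\]
Setting $\mathcal{L}^2_{\gamma_{n+1}\gets\gamma_n}=1+\epsilon_n$ with $\epsilon_n\geq 0$, the summability of $(\mathcal{L}_{\gamma_{n+1}\gets\gamma_n}-1)$ together with its boundedness yields $\sum_n\epsilon_n<+\infty$, so the above inequality is quasi-Fej\'er. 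A Robbins--Siegmund type lemma then gives that $\|x_n-y_n\|^2$ converges and $\sum_n\|x_n-T_{\gamma_n}x_n\|^2<+\infty$; in particular $(x_n)$ is bounded and $x_n-T_{\gamma_n}x_n\to 0$ strongly, which establishes the first assertion. Since $y_n\to y^*$ strongly, it also follows that $\lim_n\|x_n-y^*\|$ exists for every $y^*\in\Fix T_{\gamma^*}$.

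For the weak convergence I would invoke Opial's lemma, for which it remains to verify that every weak cluster point of $(x_n)$ lies in $\Fix T_{\gamma^*}$. Given $x_{n_k}\rightharpoonup\bar x$, the decomposition
\[
x_{n_k}-T_{\gamma^*}x_{n_k} = \bigl(x_{n_k}-T_{\gamma_{n_k}}x_{n_k}\bigr) + \bigl(T_{\gamma_{n_k}}x_{n_k}-T_{\gamma^*}x_{n_k}\bigr)
\]
has first summand tending to zero strongly by the asymptotic regularity already established. \emph{The main obstacle} is to control the second summand and conclude $\bar x\in\Fix T_{\gamma^*}$, despite only having weak convergence of $(x_{n_k})$. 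I would address this with a demiclosedness principle for pointwise convergent nonexpansive operators: joint continuity of $(x,\gamma)\mapsto T_\gamma x$ yields pointwise convergence $T_{\gamma_n}x\to T_{\gamma^*}x$ for every $x\in\mathcal{H}$, which combined with the equi-nonexpansiveness of the family $(T_\gamma)_{\gamma\in\Gamma}$ and the strong convergence $x_{n_k}-T_{\gamma_{n_k}}x_{n_k}\to 0$ forces $\bar x\in\Fix T_{\gamma^*}$ (equivalently, this may be phrased through graphical convergence of the maximally monotone operators $\Id-T_{\gamma_n}$ to $\Id-T_{\gamma^*}$). Opial's lemma then delivers the weak convergence of $(x_n)$ to some $x\in\Fix T_{\gamma^*}$.
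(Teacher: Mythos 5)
Your proposal is correct, and since the paper states this theorem without proof (it is imported from \cite[Theorem~4.5]{atenas2025relocated}), the relevant comparison is with that source, whose argument follows the same template you use: a sliding reference sequence $y_n=\mathcal{Q}_{\gamma_n\gets\gamma^*}y^*$ of relocated fixed points, a quasi-Fej\'er inequality obtained from the $\mathcal{L}_{\gamma_{n+1}\gets\gamma_n}$-Lipschitz property of the relocators combined with the $\alpha$-averaged inequality at $(x_n,y_n)$, and an Opial-type identification of the weak limit. The one step you leave at a high level does close exactly as you indicate: for every $y\in\mathcal{H}$, monotonicity of $\Id-T_{\gamma_{n_k}}$ gives $\langle (y-T_{\gamma_{n_k}}y)-(x_{n_k}-T_{\gamma_{n_k}}x_{n_k}),\,y-x_{n_k}\rangle\ge 0$, and letting $k\to\infty$ (using $T_{\gamma_{n_k}}y\to T_{\gamma^*}y$ from the continuity hypothesis, the strong convergence $x_{n_k}-T_{\gamma_{n_k}}x_{n_k}\to 0$, and $x_{n_k}\rightharpoonup\bar x$ with bounded $(x_{n_k})$) yields $\langle y-T_{\gamma^*}y,\,y-\bar x\rangle\ge 0$ for all $y$, so maximal monotonicity of the continuous monotone operator $\Id-T_{\gamma^*}$ forces $\bar x\in\Fix T_{\gamma^*}$.
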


Theorem~\ref{thm:Convergence of fixed-point relocator} establishes weak convergence of relocated fixed-point iterations but says nothing about the rate.

In what follows, we investigate the rate at which relocated fixed-point iterates converge assuming a linear error bound holds (see Definition~\ref{def: bounded linear regularity} below). To formalize this, recall that a sequence $(x_n)_{n\in\mathbb{N}}$ is said to converge \emph{$R$-linearly} to $x\in\mathcal{H}$ if there exist $C\in\mathbb{R_{+}}$ and $r\in(0,1)$ such that $\|x_n-x\|\leq Cr^n$ for all $n\in\mathbb{N}$.  Note that, in particular, this implies that $(x_n)_{n \in \mathbb{N}}$ converges strongly to $x$.

\section{Linear convergence of relocated fixed-point iterations}\label{section: 3}
In this section, we establish linear convergence of  relocated fixed-point iterations. 
The key ingredient in our analysis is the following notion of bounded linear regularity for a family of operators.
\begin{definition}[Uniform linear regularity]\label{def: bounded linear regularity}
    Let $\Gamma\subseteq\mathbb{R}_{++}$ be a nonempty set and $(T_{\gamma})_{\gamma \in \Gamma}$ be a family of operators on $\mathcal{H}$. We say $(T_{\gamma})_{\gamma \in \Gamma}$ is \emph{uniformly boundedly linearly regular} if, for any nonempty bounded set $S \subseteq \mathcal{H}$, there exists $\kappa >0$ such that
    \begin{equation}\label{eq:ulr}
    \dist(x, \Fix T_\gamma)\leq \kappa \|x - T_\gamma x\|\quad\forall x \in S,\forall \gamma\in\Gamma.
\end{equation}
When \eqref{eq:ulr} holds with $S=\mathcal{H}$, we say $(T_{\gamma})_{\gamma \in \Gamma}$ is \emph{uniformly linearly regular} (with constant $\kappa$). 
\end{definition}

\begin{remark}
Some comments on Definition~\ref{def: bounded linear regularity} are in order. When $\Gamma=\{\gamma^*\}$ is a singleton, uniform (bounded) linear regularity for the family $(T_\gamma)_{\gamma\in\Gamma}$ reduces to \emph{(bounded) linear regularity} of operator $T_{\gamma^*}$ (see \cite[Definition~2.1]{bauschke2015linear}), which is closely related to \emph{metric subregularity} of $(\Id-T)$. In the constant stepsize setting, metric subregularity of $(\Id-T)$ implies the condition of coercivity used to establish linear convergence of fixed-point iterations; see \cite[eq. (3.1), Lemma~3.1]{hesse2013nonconvex} and \cite[Theorem~1.6]{aspelmeier2016local}. When $\Gamma$ is finite, (bounded) linear regularity is one of the key ingredients for analyzing convergence of the ``cyclic algorithm'' for finding a common fixed-point assuming one exists \cite[Corollary~6.2]{bauschke2015linear} (\emph{i.e.,} an element of $\cap_{\gamma\in\Gamma}\Fix T_{\gamma}$). See also \cite{borwein2017convergence} for discussion on H\"older versions of these properties. 

When $\Gamma$ is countable, \cite[Definition 4.1]{cegielski2018regular} proposes a regularity notion for sequences of operators. Precisely, given a nonempty closed convex set $C \subseteq \mathcal{H}$, a sequence of operators $(T_n)_{n\in\mathbb{N}}$ is said to be \emph{boundedly linearly $C$-regular} if, for any bounded set $S \subseteq \mathcal{H}$, there exists $\kappa >0$ such that 
$$\dist(x, C)\leq \kappa \|x - T_n x\|\quad\forall x \in S,\,\forall n \in \mathbb{N}.$$
As  demonstrated in \cite{cegielski2018regular}, this regularity condition is crucial for analyzing the rate of convergence of fixed-point iterations of the type $x_{n+1} = T_n x_n$ for all $n \in \mathbb{N}$ for finding a point in $C:=\bigcap_{n \in \mathbb{N}} \Fix T_n$ when one exists (see, e.g., \cite[Theorem 6.1]{cegielski2018regular}). Although nonemptiness of $C$ is a reasonable assumption in feasibility problems (for instance),  as noted in \cite[Remark 4.13]{atenas2025relocated}, it need not hold in the setting of relocated fixed-point iterations, even for simple structured optimization problems.
\end{remark}

In the following, Lemma~\ref{lemma: contraction implies regularity} shows that a family of contractions satisfies the condition of uniform (bounded) linear regularity. The result extends \cite[Lemma~5.5]{van2023bounding}, which shows that a single contraction mapping satisfies linear regularity.
\begin{lemma}\label{lemma: contraction implies regularity}
    Let $\Gamma\subseteq\mathbb{R}_{++}$ be a nonempty set and $(T_\gamma)_{\gamma\in\Gamma}$ be a family of $\beta$-contractions on $\mathcal{H}$. Then $(T_\gamma)_{\gamma\in\Gamma}$ is uniformly linearly regular with constant $\kappa=1/(1-\beta)$.
\end{lemma}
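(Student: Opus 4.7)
The plan is to exploit the Banach contraction principle, which applies uniformly across the family since the contraction constant $\beta$ is the same for every $T_\gamma$. For each $\gamma\in\Gamma$, the operator $T_\gamma$ is a $\beta$-contraction with $\beta\in[0,1)$, so it admits a unique fixed point, say $x_\gamma^\ast\in\Fix T_\gamma$. In particular, $\dist(x,\Fix T_\gamma)=\|x-x_\gamma^\ast\|$ for every $x\in\mathcal{H}$, which reduces the desired bound to controlling $\|x-x_\gamma^\ast\|$ by $\|x-T_\gamma x\|$.

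To obtain this control, I would use the triangle inequality followed by the contraction property applied at the pair $(x,x_\gamma^\ast)$. Writing
\begin{equation*}
    \|x-x_\gamma^\ast\|\leq \|x-T_\gamma x\|+\|T_\gamma x-T_\gamma x_\gamma^\ast\|\leq \|x-T_\gamma x\|+\beta\|x-x_\gamma^\ast\|,
\end{equation*}
and rearranging yields $(1-\beta)\|x-x_\gamma^\ast\|\leq \|x-T_\gamma x\|$, from which the inequality $\dist(x,\Fix T_\gamma)\leq \tfrac{1}{1-\beta}\|x-T_\gamma x\|$ follows immediately. Since the constant $\tfrac{1}{1-\beta}$ is independent of both $\gamma\in\Gamma$ and $x\in\mathcal{H}$, the estimate is valid on the whole space and so $(T_\gamma)_{\gamma\in\Gamma}$ is uniformly linearly regular in the sense of Definition~\ref{def: bounded linear regularity} (with $S=\mathcal{H}$).

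There is no real obstacle here: the only nontrivial ingredients are (i) the existence and uniqueness of $x_\gamma^\ast$, which is immediate from the Banach contraction principle, and (ii) the fact that the bound constant depends only on the common contraction factor $\beta$, not on $\gamma$, which is what guarantees uniformity over the family.
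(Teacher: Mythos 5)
Your proposal is correct and follows essentially the same argument as the paper: existence of the unique fixed point $x_\gamma$ via the Banach contraction principle, then the triangle inequality together with the $\beta$-contraction property and a rearrangement to obtain $\dist(x,\Fix T_\gamma)\leq \tfrac{1}{1-\beta}\|x-T_\gamma x\|$ with a constant independent of $\gamma$.
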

\begin{proof} 
Let $\gamma \in \Gamma$ be arbitrary. Since $T_\gamma$ is a contraction, it has a unique fixed-point  \cite[Theorem~1.50]{bauschke2017convex}. That is, $\Fix T_\gamma=\{x_\gamma\}$ for some $x_\gamma\in\mathcal{H}$. For any $x\in \mathcal{H}$, the triangle inequality and the fact that $T_\gamma$ is a $\beta$-contraction yields
\begin{equation}\label{eq-NDR: eq contraction implies regualarity}
        \|x-x_\gamma\|=\|x-T_\gamma x_\gamma\|
        \leq \|x-T_{\gamma}x\|+\|T_\gamma x-T_\gamma x_\gamma \|
        \leq \|x-T_\gamma x\|+\beta\|x-x_\gamma\|.   
\end{equation}
Rearranging \eqref{eq-NDR: eq contraction implies regualarity} yields
$$\dist(x,\Fix T_{\gamma})=\|x-x_\gamma\|\leq\frac{1}{1-\beta}\|x-T_\gamma x\|,$$
which establishes the result.
\end{proof}

Now we consider the following assumption which plays a key role for establishing the linear convergence result,  by relating the rate of convergence of the stepsize sequence $(\gamma_n)_{n \in \mathbb{N}}$ to the relocated fixed-point iterations.
\begin{assumption}\label{assumption: Q is Lipschitz depending on gamma}
    Let $\Gamma\subseteq\mathbb{R}_{++}$ be a nonempty set,  $(T_\gamma)_{\gamma\in\Gamma}$ be a family of operators on $\mathcal{H}$, and $(\mathcal{Q}_{\delta\leftarrow\gamma})_{\gamma,\delta\in\Gamma}$ be fixed-point relocators for $(T_\gamma)_{\gamma\in\Gamma}$ on $\mathcal{H}$. 
    For each bounded subset $S\subseteq\cup_{\gamma\in\Gamma}\bigl(\Fix T_\gamma\times \{\gamma\}\bigr)$,    
    there exists a constant $L>0$ such that 
    $$ \|\mathcal{Q}_{\delta\leftarrow\gamma}x-\mathcal{Q}_{\gamma\leftarrow\gamma}x\|\leq L|\delta-\gamma|\quad\forall \delta\in\Gamma,\,\forall (x,\gamma)\in S. $$
\end{assumption}
In Section~\ref{s:DR}, we will show that the fixed-point relocators in Example~\ref{ex:FPR-DR} for the Douglas--Rachford operators satisfy Assumption~\ref{assumption: Q is Lipschitz depending on gamma} (see Lemma~\ref{lemma: DR relocator Lipschitz}).

The following establishes linear convergence of relocated fixed-point iterations initialized at a fixed-point of $T_{\gamma_0}$. This auxiliary result will be useful in the main result of this section, Theorem~\ref{thm: T satisfies linear error bound}.

\begin{lemma}\label{Lemma: c_n coverges linearly to c}
    Let $\Gamma\subseteq\mathbb{R}_{++}$ be a nonempty set, $(T_\gamma)_{\gamma\in\Gamma}$ be a family of operators on $\mathcal{H}$ with $\Fix T_{\gamma}\neq\emptyset$ for all $\gamma\in\Gamma$, and $(\mathcal{Q}_{\delta\leftarrow\gamma})_{\gamma,\delta\in\Gamma}$ be fixed-point relocators  for $(T_\gamma)_{\gamma\in\Gamma}$ on $\mathcal{H}$. Suppose Assumption~\ref{assumption: Q is Lipschitz depending on gamma} holds, and $(\gamma_n)_{n\in\mathbb{N}}\subseteq\Gamma$ is a sequence that converges $R$-linearly to $\gamma^*\in\Gamma$.
    Given $c_0\in\Fix T_{\gamma_0}$, generate a sequence $(c_n)_{n\in\mathbb{N}}$ according to
    \begin{equation}\label{eq-NDR:iteration for fixed-point relocator}
c_{n+1}:=\mathcal{Q}_{\gamma_{n+1}\leftarrow\gamma_n}c_n 
\quad \forall n\in\mathbb{N}.
    \end{equation}
   Then, $c_n\in\Fix T_{\gamma_n}$ for all $n\in\mathbb{N}$, and $(c_n)_{n\in\mathbb{N}}$ converges $R$-linearly to $\mathcal{Q}_{\gamma^*\leftarrow\gamma_0}c_0\in\Fix T_{\gamma^*}$.
\end{lemma}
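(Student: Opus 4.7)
The plan is to avoid iterating the bound along $n$ and instead exploit the chain property in Definition~\ref{def: fixed-point relocator}(\ref{def iii}) to collapse the composition $\mathcal{Q}_{\gamma_n\leftarrow\gamma_{n-1}}\cdots\mathcal{Q}_{\gamma_1\leftarrow\gamma_0}$ into a single relocator $\mathcal{Q}_{\gamma_n\leftarrow\gamma^*}$ acting on the limit $c^*$, so that the desired discrepancy $\|c_n-c^*\|$ is governed directly by $|\gamma_n-\gamma^*|$.

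First I would verify by induction that $c_n\in\Fix T_{\gamma_n}$: the base case is given, and if $c_n\in\Fix T_{\gamma_n}$, then Definition~\ref{def: fixed-point relocator}(\ref{def i}) implies $c_{n+1}=\mathcal{Q}_{\gamma_{n+1}\leftarrow\gamma_n}c_n\in\Fix T_{\gamma_{n+1}}$. A second induction, using Definition~\ref{def: fixed-point relocator}(\ref{def iii}) together with $c_n\in\Fix T_{\gamma_n}$, yields the telescoped identity
\begin{equation*}
c_n=\mathcal{Q}_{\gamma_n\leftarrow\gamma_0}c_0\quad\forall n\in\mathbb{N}.
\end{equation*}
Set $c^*:=\mathcal{Q}_{\gamma^*\leftarrow\gamma_0}c_0$, which lies in $\Fix T_{\gamma^*}$ by Definition~\ref{def: fixed-point relocator}(\ref{def i}).

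Now comes the key rewriting. Applying Definition~\ref{def: fixed-point relocator}(\ref{def iii}) to $c_0\in\Fix T_{\gamma_0}$ with the intermediate parameter $\gamma^*$ gives
\begin{equation*}
\mathcal{Q}_{\gamma_n\leftarrow\gamma^*}c^*=\mathcal{Q}_{\gamma_n\leftarrow\gamma^*}\mathcal{Q}_{\gamma^*\leftarrow\gamma_0}c_0=\mathcal{Q}_{\gamma_n\leftarrow\gamma_0}c_0=c_n.
\end{equation*}
Combined with Remark~\ref{remark: identity relocator} (so $\mathcal{Q}_{\gamma^*\leftarrow\gamma^*}c^*=c^*$), this yields
\begin{equation*}
\|c_n-c^*\|=\|\mathcal{Q}_{\gamma_n\leftarrow\gamma^*}c^*-\mathcal{Q}_{\gamma^*\leftarrow\gamma^*}c^*\|.
\end{equation*}
The right-hand side is exactly the quantity controlled by Assumption~\ref{assumption: Q is Lipschitz depending on gamma} applied to the (trivially bounded) singleton set $S=\{(c^*,\gamma^*)\}\subseteq \Fix T_{\gamma^*}\times\{\gamma^*\}$. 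Thus there exists $L>0$ with
\begin{equation*}
\|c_n-c^*\|\leq L|\gamma_n-\gamma^*|\quad\forall n\in\mathbb{N}.
\end{equation*}

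Finally, since $(\gamma_n)_{n\in\mathbb{N}}$ converges $R$-linearly to $\gamma^*$, there exist $M\geq 0$ and $r\in(0,1)$ such that $|\gamma_n-\gamma^*|\leq Mr^n$, whence $\|c_n-c^*\|\leq LMr^n$, proving $R$-linear convergence of $(c_n)_{n\in\mathbb{N}}$ to $c^*$.

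The proof involves no real obstacle: the substantive move is recognising that $c_n$ need not be analyzed step by step, but can be expressed as the action of a single relocator on the limit $c^*$. Once this collapse is performed, Assumption~\ref{assumption: Q is Lipschitz depending on gamma} (applied to a one-point set, so no uniformity issues arise) transfers the $R$-linear rate of $(\gamma_n)$ verbatim to $(c_n)$.
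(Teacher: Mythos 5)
Your proof is correct and follows essentially the same route as the paper's: induction for $c_n\in\Fix T_{\gamma_n}$, the chain property in Definition~\ref{def: fixed-point relocator}(\ref{def iii}) to collapse the composition into $c_n=\mathcal{Q}_{\gamma_n\leftarrow\gamma^*}c^*$, and Assumption~\ref{assumption: Q is Lipschitz depending on gamma} applied to a singleton together with Remark~\ref{remark: identity relocator} to transfer the $R$-linear rate of $(\gamma_n)_{n\in\mathbb{N}}$ to $(c_n)_{n\in\mathbb{N}}$. The only cosmetic difference is that the paper first writes $c_0=\mathcal{Q}_{\gamma_0\leftarrow\gamma^*}c$ and then re-applies the chain rule, whereas you apply Definition~\ref{def: fixed-point relocator}(\ref{def iii}) directly to $c_0$ with intermediate parameter $\gamma^*$.
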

\begin{proof}
     We first show that $c_n\in\Fix T_{\gamma_n}$ for all $n\in\mathbb{N}$. If $c_n\in\Fix T_{\gamma_n}$, then Definition~\ref{def: fixed-point relocator}(\ref{def i}) implies 
     $$c_{n+1}=\mathcal{Q}_{\gamma_{n+1}\leftarrow\gamma_{n}}c_n\subseteq \mathcal{Q}_{\gamma_{n+1}\leftarrow\gamma_{n}}(\Fix T_{\gamma_n})\subseteq \Fix T_{\gamma_{n+1}}.$$
     Since $c_0\in\Fix T_{\gamma_0}$ by assumption, the result follows by induction on $n$.
     
     We now turn our attention to convergence of $(c_n)_{n\in\mathbb{N}}$. Set $c:=\mathcal{Q}_{\gamma^*\leftarrow\gamma_0}c_0$. Then Definition~\ref{def: fixed-point relocator}(\ref{def i}) yields $c\in\Fix T_{\gamma^*}$. Combining Remark~\ref{remark: identity relocator} and Definition~\ref{def: fixed-point relocator}(\ref{def iii}) gives
      \begin{equation}\label{eq:c_0-c}
      c_0=Q_{\gamma_0\leftarrow\gamma_0}c_0=Q_{\gamma_0\leftarrow\gamma^*}Q_{\gamma^*\leftarrow\gamma_0}c_0=Q_{\gamma_0\leftarrow\gamma^*}c.
      \end{equation}
     Since $c_n\in\Fix T_n$ for all $n\in\mathbb{N}$, Definition~\ref{def: fixed-point relocator}(\ref{def iii}) together with \eqref{eq:c_0-c} gives
       $$ c_n = (Q_{\gamma_n\leftarrow\gamma_{n-1}}\dots Q_{\gamma_2\leftarrow\gamma_1}Q_{\gamma_1\leftarrow\gamma_0})c_0 = Q_{\gamma_n\gets\gamma_0}c_0=Q_{\gamma_n\gets\gamma_0}Q_{\gamma_0\gets\gamma^*}c=Q_{\gamma_n\gets\gamma^*}c\quad\forall n\in \mathbb{N}.$$
    
    By Assumption~\ref{assumption: Q is Lipschitz depending on gamma}, there exists $L>0$ such that    \begin{equation}\label{eq-NDR:c_n linearly converge to c}  
            \|c_n-c\|=\|\mathcal{Q}_{\gamma_n\leftarrow\gamma^*}c-Q_{\gamma^*\leftarrow\gamma^*}c\|
            \leq L|\gamma_n-\gamma^*|.
    \end{equation}
    Since $(\gamma_n)_{n\in\mathbb{N}}$ converges $R$-linearly to $\gamma^*$, \eqref{eq-NDR:c_n linearly converge to c} implies $(c_n)_{n\in\mathbb{N}}$ converges $R$-linearly to $c$. This completes the proof.
    \end{proof}

\begin{remark}
    We emphasize that the point $c_0\in \Fix T_{\gamma_0}$ in Lemma~\ref{Lemma: c_n coverges linearly to c} is only used to define the auxiliary sequence \eqref{eq-NDR:iteration for fixed-point relocator} of fixed points in the proof. It is not required for the implementation of the algorithm encoded by \eqref{e:FPR-iteration}.
\end{remark}

We are now ready to prove our main result (Theorem~\ref{thm: T satisfies linear error bound}) concerning linear convergence of relocated fixed-point iterations under uniform bounded linear regularity.

\begin{theorem}[Rate of convergence of relocated fixed-point iterations]\label{thm: T satisfies linear error bound}
    Let $\Gamma\subseteq\mathbb{R}_{++}$ be a nonempty set, $(T_\gamma)_{\gamma\in\Gamma}$ be a family of $\alpha$-averaged nonexpansive operators with $\Fix{T_{\gamma}}\neq\emptyset$ for all $\gamma\in\Gamma$, and $(\mathcal{Q}_{\delta\leftarrow\gamma})_{\gamma,\delta\in\Gamma}$ be fixed-point relocators for $(T_\gamma)_{\gamma\in\Gamma}$ on $\mathcal{H}$ with Lipschitz constant $(\mathcal{L_{\delta\leftarrow\gamma}})_{\gamma,\delta\in\Gamma}$ in $[1,+\infty)$. Suppose Assumption~\ref{assumption: Q is Lipschitz depending on gamma} holds, $\mathcal{H}\times \Gamma\to \mathcal{H}\colon (x,\gamma)\mapsto T_{\gamma}x$ is continuous, $(T_\gamma)_{\gamma\in\Gamma}$ is uniformly boundedly linearly  regular, $(\gamma_n)_{n\in\mathbb{N}}\subseteq\Gamma$ converges $R$-linearly to $\gamma^*\in\Gamma$ and $\sum_{n\in\mathbb{N}}(\mathcal{L}_{\gamma_{n+1}\leftarrow\gamma_n}-1)<+\infty$.
    Given $x_0\in\mathcal{H}$, generate a sequence $(x_n)_{n\in\mathbb{N}}$ according to
    \begin{equation}\label{eq-NDR:iteration for fixed-point relocator-error-bound}
x_{n+1}:=\mathcal{Q}_{\gamma_{n+1}\leftarrow\gamma_n}T_{\gamma_{n}}x_n \quad \forall n\in\mathbb{N}.
    \end{equation}
    Then the following assertions hold:
    \begin{enumerate}[(i)]
    \item \label{it i}$\big(\dist(x_n,\Fix T_{\gamma_n})\big)_{n \in \mathbb{N}}$ converges $R$-linearly to zero.
    \item\label{it ii:T satisfies linear error bound}$(x_n)_{n\in\mathbb{N}}$ and $(T_{\gamma_n}x_n)_{n\in\mathbb{N}}$ converge $R$-linearly to the same point in $\Fix T_{\gamma^*}$.
    \end{enumerate}
\end{theorem}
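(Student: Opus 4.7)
The plan is to bound the residual $d_n := \dist(x_n,\Fix T_{\gamma_n})$ by a contractive recursion and use this to show $(x_n)$ is $R$-linearly Cauchy. By Theorem~\ref{thm:Convergence of fixed-point relocator} the sequence $(x_n)$ converges weakly, hence is bounded, so uniform bounded linear regularity applies with some constant $\kappa$ on $S := \{x_n : n\in\mathbb{N}\}$. The main ingredients will be the averaged nonexpansive inequality, the Lipschitz bound on the relocator, the regularity bound, and Assumption~\ref{assumption: Q is Lipschitz depending on gamma}.

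For part~(\ref{it i}), I set $p_n := P_{\Fix T_{\gamma_n}}(x_n)$ and $\hat p_n := \mathcal{Q}_{\gamma_{n+1}\leftarrow\gamma_n}p_n$; by Definition~\ref{def: fixed-point relocator}(\ref{def i}), $\hat p_n \in \Fix T_{\gamma_{n+1}}$. Combining $\|T_{\gamma_n}x_n - p_n\|^2 \leq d_n^2 - \tfrac{1-\alpha}{\alpha}\|x_n - T_{\gamma_n}x_n\|^2$ (from $\alpha$-averaged nonexpansiveness with $p_n \in \Fix T_{\gamma_n}$) with $d_n \leq \kappa\|x_n - T_{\gamma_n}x_n\|$ (regularity) yields $\|T_{\gamma_n}x_n - p_n\| \leq q\, d_n$ where $q := \sqrt{1 - (1-\alpha)/(\alpha\kappa^2)} \in [0,1)$. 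Since $\mathcal{Q}_{\gamma_{n+1}\leftarrow\gamma_n}$ is $\mathcal{L}_{\gamma_{n+1}\leftarrow\gamma_n}$-Lipschitz,
\[
d_{n+1} \leq \|x_{n+1} - \hat p_n\| = \|\mathcal{Q}_{\gamma_{n+1}\leftarrow\gamma_n}T_{\gamma_n}x_n - \mathcal{Q}_{\gamma_{n+1}\leftarrow\gamma_n}p_n\| \leq \mathcal{L}_{\gamma_{n+1}\leftarrow\gamma_n}\, q\, d_n.
\]
Iterating and bounding $\prod_{k=0}^{n-1}\mathcal{L}_{\gamma_{k+1}\leftarrow\gamma_k} \leq M < \infty$ via the summability hypothesis delivers $d_n \leq M q^n d_0$, which proves~(\ref{it i}).

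For part~(\ref{it ii:T satisfies linear error bound}), the triangle inequality gives
\[
\|x_{n+1} - x_n\| \leq \|x_{n+1} - \hat p_n\| + \|\hat p_n - p_n\| + \|p_n - x_n\| \leq \mathcal{L}_{\gamma_{n+1}\leftarrow\gamma_n}\, q\, d_n + \|\hat p_n - p_n\| + d_n.
\]
For the middle term, $p_n = \mathcal{Q}_{\gamma_n\leftarrow\gamma_n}p_n$ by Remark~\ref{remark: identity relocator}, and $\|p_n\| \leq \|x_n\| + d_n$ shows $(p_n)$ is bounded; applying Assumption~\ref{assumption: Q is Lipschitz depending on gamma} to $\{(p_n,\gamma_n):n\in\mathbb{N}\}$ therefore yields $\|\hat p_n - p_n\| \leq L|\gamma_{n+1} - \gamma_n|$, which decays $R$-linearly since $(\gamma_n)$ does. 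Combined with $d_n \leq Mq^nd_0$ and boundedness of $(\mathcal{L}_{\gamma_{n+1}\leftarrow\gamma_n})$, this gives $\|x_{n+1} - x_n\| \leq Cr^n$ for some $C>0$ and $r\in(0,1)$. A telescoping estimate then shows $(x_n)$ is Cauchy and converges strongly $R$-linearly to some limit, which by Theorem~\ref{thm:Convergence of fixed-point relocator} coincides with the weak limit $x \in \Fix T_{\gamma^*}$. Finally, $\|T_{\gamma_n}x_n - x_n\| \leq \|T_{\gamma_n}x_n - p_n\| + \|p_n - x_n\| \leq (q+1)d_n$ transfers the same $R$-linear rate to $(T_{\gamma_n}x_n)$.

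The main delicacy I anticipate is juggling the moving fixed-point sets: the relocator sends each $p_n$ to $\hat p_n \in \Fix T_{\gamma_{n+1}}$ at a drift cost $\|\hat p_n - p_n\|$, and Assumption~\ref{assumption: Q is Lipschitz depending on gamma} is exactly what keeps this drift $O(|\gamma_{n+1}-\gamma_n|)$, which inherits the $R$-linear rate of $(\gamma_n)$. Without this Lipschitz-in-$\gamma$ property the triangle estimate for $\|x_{n+1}-x_n\|$ would fail to be summable.
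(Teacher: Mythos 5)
Your proposal is correct and follows essentially the same route as the paper's proof: project onto the moving fixed-point set, combine $\alpha$-averagedness with the uniform error bound to contract $\dist(x_n,\Fix T_{\gamma_n})$, relocate the projection with the $\ell_n$-Lipschitz bound, and control the drift $\|\mathcal{Q}_{\gamma_{n+1}\leftarrow\gamma_n}p_n-p_n\|$ via Assumption~\ref{assumption: Q is Lipschitz depending on gamma} to get an $R$-linearly Cauchy sequence. Your minor variations (bounding $(p_n)$ directly by $\|x_n\|+d_n$ instead of invoking Lemma~\ref{Lemma: c_n coverges linearly to c}, using the product bound $M$ rather than eventual contraction of $\ell_n^2(1-\tfrac{1-\alpha}{\alpha\kappa^2})$, and telescoping one-step increments instead of the paper's multi-step comparison with $\mathcal{Q}_{\gamma_{n+m}\leftarrow\gamma_n}p_n$) are all sound and do not change the substance of the argument.
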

\begin{proof}
By Theorem~\ref{thm:Convergence of fixed-point relocator}, the sequence $(x_n)_{n\in\mathbb{N}}$ converges weakly to some $x\in\Fix T_{\gamma^*}$, hence, in particular, the sequence $(x_n)_{n\in\mathbb{N}}$ is bounded. Thus, since $(T_\gamma)_{\gamma\in\Gamma}$ is uniformly boundedly linearly regular, there exists a $\kappa>0$ such that 
\begin{equation}\label{eq:error bound in proof}
    \dist(x_n, \Fix T_{\gamma_n})\leq \kappa \|x_n - T_{\gamma_n} x_n\|\quad\forall n\in\mathbb{N}.
\end{equation}
For convenience, denote $\ell_n:=\mathcal{L}_{\gamma_{n+1}\leftarrow\gamma_n}$ and $p_n:=P_{\Fix T_{\gamma_n}}x_n$ for all $n \in \mathbb{N}$. The latter is well-defined as $\Fix T_{\gamma_n}$ is a nonempty, closed, convex set \cite[Corollary~4.24 \& Theorem~4.29]{bauschke2017convex}.
We also claim that $(p_n)_{n \in \mathbb{N}}$ is bounded. To see this, let $c_0\in\Fix T_{\gamma_0}$ and define $c_{n+1}:=\mathcal{Q}_{\gamma_{n+1}\leftarrow\gamma_n}c_n$. Then, by Lemma~\ref{Lemma: c_n coverges linearly to c}, $c_n\in\Fix T_{\gamma_n}$ for all $n\in\mathbb{N}$ and $(c_n)_{n \in \mathbb{N}}$ converges (so, in particular, is bounded). Since $P_{\Fix T_{\gamma_n}}$ is nonexpansive, we have
$$ \|p_n-c_n\| = \|P_{\Fix T_{\gamma_n}}x_n-P_{\Fix T_{\gamma_n}}c_n\|\leq \|x_n-c_n\|,$$
from which boundedness of $(p_n)_{n \in \mathbb{N}}$ follows. 

 (\ref{it i})~Since $p_n\in\Fix_{T_{\gamma_{n}}}$, Definition~\ref{def: fixed-point relocator}(\ref{def i}) implies $\mathcal{Q}_{\gamma_{n+1}\leftarrow\gamma_n}p_n\in\Fix T_{\gamma_{n+1}}$. From Definition~\ref{def: fixed-point relocator}(\ref{def iv}), we also have that $\mathcal{Q}_{\gamma_{n+1}\leftarrow\gamma_n}$ is $\ell_n$-Lipschitz continuous. Hence, as $T_{\gamma_n}$ is $\alpha$-averaged and \eqref{eq:error bound in proof} holds, we have
    \begin{equation}\label{eq-NDR:distance}
    \begin{aligned}
    \dist^2(x_{n+1},\Fix T_{\gamma_{n+1}})&\leq\|x_{n+1}-\mathcal{Q}_{\gamma_{n+1}\leftarrow\gamma_n}p_n\|^2\\
    &=\|\mathcal{Q}_{\gamma_{n+1}\leftarrow\gamma_n}T_{\gamma_n}x_n-\mathcal{Q}_{\gamma_{n+1}\leftarrow\gamma_n}p_n\|^2\\
    &\leq\ell_n^2\|T_{\gamma_{n}}x_n-p_n\|^2\\
    &\leq \ell_n^2(\|x_n-p_n\|^2-\frac{1-\alpha}{\alpha}\|(\Id-T_{\gamma_n})x_n\|^2)\\
    &\leq \ell_n^2(\dist^2(x_n,\Fix T_{\gamma_n})-\frac{1-\alpha}{\alpha\kappa^2}\dist^2(x_n,\Fix T_{\gamma_n}))\\
    &=\ell_n^2(1-\frac{1-\alpha}{\alpha\kappa^2})\dist^2(x_n,\Fix T_{\gamma_n}).
    \end{aligned}
    \end{equation}
    As $\ell_n\to 1^+$ and $1-\frac{1-\alpha}{\alpha\kappa^2}<1$, there exists $n_0\in\mathbb{N}$ and $r\in(0,1)$ such that $\ell_n^2(1-\frac{1-\alpha}{\alpha\kappa^2})<r^2<1$ for all $n\geq n_0$. Hence \eqref{eq-NDR:distance} implies
     \begin{equation}\label{eq-NDR:12*}
         \dist(x_{n+1},\Fix T_{\gamma_{n+1}})\leq r\dist(x_{n},\Fix T_{\gamma_{n}})\leq \dots\leq r^{n+1-n_0}\dist(x_{n_0},\Fix T_{\gamma_{n_0}}),
     \end{equation}
     from which $R$-linear convergence to zero follows.

(\ref{it ii:T satisfies linear error bound})~Since $p_n\in\Fix T_{\gamma_n}$, Definition~\ref{def: fixed-point relocator}\eqref{def i} implies  $v_{k}:=\mathcal{Q}_{\gamma_{k}\leftarrow\gamma_n}p_n\in\Fix {T_{\gamma_{k}}}$ for any $k\in\mathbb{N}$. Then, by Remark~\ref{remark: identity relocator} and Definition~\ref{def: fixed-point relocator}\eqref{def iii}, we have $v_n=p_n$ and
     $$ v_{k+1} = \mathcal{Q}_{\gamma_{k+1}\leftarrow\gamma_n}p_n = \mathcal{Q}_{\gamma_{k+1}\leftarrow\gamma_k}\mathcal{Q}_{\gamma_{k}\leftarrow\gamma_n}p_n=\mathcal{Q}_{\gamma_{k+1}\leftarrow\gamma_k}T_{\gamma_k}v_k\quad\forall k\in\mathbb{N}. $$
   Since $\sum_{n\in\mathbb{N}}(\ell_n-1)<+\infty$, it follows that $M:=\prod_{n\in\mathbb{N}}\ell_n<+\infty$. Hence, by combining $\ell_n$-Lipschitz continuity of $\mathcal{Q}_{\gamma_{n+1}\leftarrow\gamma_n}$, $\alpha$-averagedness of $T_{\gamma_n}$ and $\ell_n \geq 1$, we obtain
     \begin{equation}\label{eq:v_n+m}
          \|x_{n+m}-v_{n+m}\| 
          \leq \left(\prod_{k=n}^{n+m-1}\ell_{k}\right)\|x_n-v_n\|\leq M\|x_n-p_n\|. 
    \end{equation}
   
    Since $(\gamma_n)_{n \in \mathbb{N}}$ and $(p_n)_{n \in \mathbb{N}}$ are bounded, Assumption~\ref{assumption: Q is Lipschitz depending on gamma} implies that there exists $L> 0$ such that
    \begin{equation}\label{eq-NDR: z-p}
        \begin{aligned}
            \|\mathcal{Q}_{\gamma_{n+m}\leftarrow\gamma_n}p_n-p_n\|&=\|\mathcal{Q}_{\gamma_{n+m}\leftarrow\gamma_n}p_n-\mathcal{Q}_{\gamma_n\leftarrow\gamma_n}p_n\|\\
            &\leq L|\gamma_{n+m}-\gamma_n|\\
            &\leq L(|\gamma_{n+m}-\gamma^*|+|\gamma^*-\gamma_n|).
        \end{aligned}
    \end{equation}
    Applying the triangle inequality followed by substituting \eqref{eq:v_n+m} and \eqref{eq-NDR: z-p} gives
        \begin{equation}\label{eq-NDR: x_n+m}
        \begin{aligned}
            \|x_{n+m}-x_n\|
            &\leq\|x_{n+m}-v_{n+m}\|+\|v_{n+m}-p_n\|+\|p_n-x_n\|\\
            &\leq M\|x_n-p_n\|+\|\mathcal{Q}_{\gamma_{n+m}\leftarrow\gamma_n}p_n-p_n\|+\|x_n-p_n\|\\
            &\leq (M+1)\dist(x_n,\Fix T_{\gamma_n})+L(|\gamma_{n+m}-\gamma^*|+|\gamma^*-\gamma_n|).
        \end{aligned}
    \end{equation}
    Since $\big(\dist(x_n,\Fix T_{\gamma_n})\big)_{n \in \mathbb{N}}$ converges $R$-linearly to zero and $(\gamma_n)_{n\in\mathbb{N}}$ converges $R$-linearly to $\gamma^*$, we conclude that $(\|x_{n+m}-x_n\|)_{m,n \in \mathbb{N}}$ converges $R$-linearly to zero for $m,n\rightarrow \infty$. In particular, this means that $(x_n)_{n\in\mathbb{N}}$ is a Cauchy sequence, and hence it converges strongly. By taking the limit as $m\to+\infty$ in \eqref{eq-NDR: x_n+m}, we therefore deduce that $(x_n)_{n\in\mathbb{N}}$ converges $R$-linearly to $x$.
    
    Finally, since $T_{\gamma_n}$ is $\alpha$-averaged and $p_n\in\Fix T_{\gamma_n} $, we have 
    $$\frac{1-\alpha}{\alpha}\|(\Id-T_{\gamma_n})x_n\|^2\leq \|x_n-p_n\|^2=\dist^2(x_n,\Fix T_{\gamma_n}).$$
    Together with the triangle inequality, this yields
     $$\|T_{\gamma_n}x_n-x\|\leq \|T_{\gamma_n}x_n-x_n\| + \|x_n-x\| \leq \sqrt{\frac{\alpha}{1-\alpha}}\dist(x_n,\Fix T_{\gamma_n}) + \|x_n-x\| , $$
     from which it follows that $(T_{\gamma_n}x_n)_{n\in\mathbb{N}}$ converges strongly and  $R$-linearly to $x$.
    The proof is now complete.
\end{proof}
\begin{remark} A closer look at the proof of Theorem~\ref{thm: T satisfies linear error bound}\eqref{it ii:T satisfies linear error bound} reveals that the sequence $(x_n)_{n\in\mathbb{N}}$ and $(T_{\gamma_n}x_n)_{n\in\mathbb{N}}$ still converge strongly (but not necessarily linearly) if $R$-linear convergence of $(\gamma_n)_{n\in\mathbb{N}}$ is replaced by convergence (without a rate).
\end{remark}

\begin{corollary}[Rate of convergence of relocated fixed-point iterations of contractions]\label{cor: linear convergence for contraction}
Let $\Gamma\subseteq\mathbb{R}_{++}$ be nonempty set, $(T_\gamma)_{\gamma\in\Gamma}$ be a family of $\beta$-contractions with $\Fix{T_{\gamma}}\neq\emptyset$ for all $\gamma\in\Gamma$, and $(\mathcal{Q}_{\delta\leftarrow\gamma})_{\gamma,\delta\in\Gamma}$ be fixed-point relocators for $(T_\gamma)_{\gamma\in\Gamma}$ on $\mathcal{H}$ with Lipschitz constant $(\mathcal{L_{\delta\leftarrow\gamma}})_{\gamma,\delta\in\Gamma}$ in $[1,+\infty)$. Suppose Assumption~\ref{assumption: Q is Lipschitz depending on gamma} holds, $\mathcal{H}\times \Gamma\to \mathcal{H}\colon (x,\gamma)\mapsto T_{\gamma}x$ is continuous, $(\gamma_n)_{n\in\mathbb{N}}\subseteq\Gamma$ converges $R$-linearly to $\gamma^*\in\Gamma$ and $\sum_{n\in\mathbb{N}}(\mathcal{L}_{\gamma_{n+1}\leftarrow\gamma_n}-1)<+\infty$.
    Given $x_0\in\mathcal{H}$, generate a sequence $(x_n)_{n\in\mathbb{N}}$ according to
    \begin{equation}\label{eq-NDR:iteration for fixed-point relocator-error-bound-cor}
x_{n+1}=\mathcal{Q}_{\gamma_{n+1}\leftarrow\gamma_n}T_{\gamma_{n}}x_n \quad \forall n\in\mathbb{N}.
    \end{equation}
    Then the following assertions hold:
    \begin{enumerate}[(i)]
    \item $(\dist(x_n,\Fix T_{\gamma_n}))_{n\in\mathbb{N}}$ converges $R$-linearly to zero.
        \item $(x_n)_{n\in\mathbb{N}}$ and $(T_{\gamma_n}x_n)_{n\in\mathbb{N}}$ converge $R$-linearly to the same point in $\Fix T_{\gamma^*}$.
    \end{enumerate}
\end{corollary}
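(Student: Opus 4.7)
The plan is to derive Corollary~\ref{cor: linear convergence for contraction} as a direct consequence of Theorem~\ref{thm: T satisfies linear error bound}. Comparing hypotheses, every assumption of the theorem is either explicitly stated in the corollary, or else concerns two properties that are not quite stated but that will follow automatically from the $\beta$-contractivity assumption: (a) that $(T_\gamma)_{\gamma \in \Gamma}$ is a family of $\alpha$-averaged nonexpansive operators with a common constant $\alpha \in (0,1)$, and (b) that $(T_\gamma)_{\gamma \in \Gamma}$ is uniformly boundedly linearly regular. So the whole proof reduces to establishing (a) and (b) and then invoking the theorem.

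For (b), I would simply cite Lemma~\ref{lemma: contraction implies regularity}, which gives uniform linear regularity of $(T_\gamma)_{\gamma \in \Gamma}$ with constant $\kappa = 1/(1-\beta)$; in particular the same $\kappa$ works over every bounded subset of $\mathcal{H}$, so uniform bounded linear regularity holds trivially.

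For (a), I would invoke the standard fact that any $\beta$-contraction with $\beta \in [0,1)$ is $\alpha$-averaged with $\alpha = (1+\beta)/2 \in (0,1)$. Concretely, for each $\gamma \in \Gamma$, write $T_\gamma = (1-\alpha)\Id + \alpha N_\gamma$ where $N_\gamma := \alpha^{-1}T_\gamma - \alpha^{-1}(1-\alpha)\Id$. The triangle inequality combined with $\beta$-Lipschitz continuity of $T_\gamma$ gives $\|N_\gamma x - N_\gamma y\| \le \alpha^{-1}(\beta + (1-\alpha))\|x-y\| = \|x-y\|$ by the choice of $\alpha$, so $N_\gamma$ is nonexpansive. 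Since $\beta$ is independent of $\gamma$, the same $\alpha$ works for the entire family, which is exactly what Theorem~\ref{thm: T satisfies linear error bound} requires. (As a side remark, $\Fix T_\gamma \ne \emptyset$ is automatic from the Banach fixed-point theorem, though it is also stated explicitly in the corollary.)

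With (a) and (b) in hand, all remaining hypotheses of Theorem~\ref{thm: T satisfies linear error bound} — continuity of $(x,\gamma) \mapsto T_\gamma x$, the existence and Lipschitz properties of the relocators, Assumption~\ref{assumption: Q is Lipschitz depending on gamma}, $R$-linear convergence of $(\gamma_n)_{n\in\mathbb{N}}$, and the summability $\sum (\mathcal{L}_{\gamma_{n+1}\leftarrow\gamma_n}-1)<+\infty$ — are assumed verbatim in the corollary. Applying the theorem then yields both conclusions (i) and (ii) at once. There is no real obstacle in this argument; the corollary is effectively a repackaging that records the convenient case where contractivity delivers both averagedness and uniform linear regularity for free.
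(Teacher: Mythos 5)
Your proposal is correct and follows essentially the same route as the paper: the paper likewise deduces uniform linear regularity from Lemma~\ref{lemma: contraction implies regularity} and $\alpha$-averagedness with $\alpha=(1+\beta)/2$ (citing a standard result where you verify it directly), and then invokes Theorem~\ref{thm: T satisfies linear error bound}.
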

\begin{proof}
Since $(T_\gamma)_{\gamma\in\Gamma}$ is a family of $\beta$-contractions, it is a family of $\alpha$-averaged operators with $\alpha:=(\beta+1)/2\in(1/2,1)$ by \cite[Proposition~4.38]{bauschke2017convex} and uniformly linearly regular by Lemma~\ref{lemma: contraction implies regularity}. The result thus follows from Theorem~\ref{thm: T satisfies linear error bound}.
\end{proof}

\begin{remark}
    When the sequence of stepsizes $(\gamma_n)_{n\in\mathbb{N}}$ does not converge $R$-linearly in Corollary~\ref{cor: linear convergence for contraction} (hence also in Theorem~\ref{thm: T satisfies linear error bound}), then linear convergence of the relocated fixed-point algorithm need not hold. To see this, let $\beta\in[0,1)$ and for $\gamma \in \mathbb{R}$, define the operator $T_{\gamma}:\mathbb{R}\rightarrow\mathbb{R}$ according to 
    $$T_\gamma x:=\gamma+\beta(x-\gamma).$$
    Then, for any $\gamma\in\mathbb{R}$, $\Fix T_{\gamma}=\{\gamma\}$ and 
    $$\|T_\gamma x-T_\gamma y\|=\beta\|x-y\|\quad\forall x,y\in\mathbb{R}.$$
    In particular, $T_{\gamma}$ is a $\beta$-contraction with fixed-point relocator $\mathcal{Q}_{\delta\leftarrow\gamma}=\delta$ for any $\delta\in\mathbb{R}$.
    
    Let $(\gamma_n)_{n\in\mathbb{N}}$ be a positive sequence which converges to some $\gamma^*>0$. Set $x_0=\gamma_0$ and consider the relocated fixed-point iteration given by
    $$x_{n+1}=\mathcal{Q}_{\gamma_{n+1}\leftarrow\gamma_n}T_{\gamma_n}x_n \quad\forall n\in\mathbb{N}.$$
    A direct computation shows that $x_{n}=\gamma_n$ for all $n\in\mathbb{N}$. Hence $(x_n)_{n\in\mathbb{N}}$ converges $R$-linearly if and only if $(\gamma_n)_{n\in\mathbb{N}}$ converges $R$-linearly. 
\end{remark}

\section{The relocated Douglas--Rachford algorithm} \label{s:DR}
In this section, we apply the results established in Section~\ref{section: 3} to derive linear convergence of the \emph{relocated Douglas--Rachford algorithm} \cite{atenas2025relocated}, which is also known as the \emph{non-stationary Douglas--Rachford algorithm} in \cite{lorenz2019non}. Given $A_1, A_2: \mathcal{H} \setto \mathcal{H}$  are maximally monotone operators, the (relocated) Douglas--Rachford algorithm aims to \begin{equation*}
    \text{find~} x\in \mathcal{H} \text{~such that~} 0 \in (A_1 + A_2)x.
\end{equation*} As outlined in the Introduction, the Douglas--Rachford algorithm generates a sequence $({x}_n)_{n \in \mathbb{N}}$ via ${x}_{n+1} = T_\gamma {x}_n$ for all $n \in \mathbb{N}$, where $T_\gamma$ is the operator in \eqref{DR operator in pri}. Under the usual maximal monotonicity assumptions, $(x_n)_{n\in\mathbb{N}}$ converges weakly to a point $x\in\Fix T_\gamma$ with $J_{\gamma A_1}x\in\zer(A_1+A_2)$.

The relocated Douglas--Rachford framework extends the classical setting by introducing relocators that allow the stepsize to vary while still guaranteeing convergence.
Given  $\gamma,\delta\in\mathbb{R_{++}}$, let $T_\gamma$ denote the Douglas--Rachford operator defined in \eqref{DR operator sec 1} and let $Q_{\delta\leftarrow\gamma}$ be the fixed-point relocator introduced in~\cite{atenas2025relocated} defined in \eqref{eq-NDR:relocator DR}.
Given an initial point $x_0\in\mathcal{H}$ and a sequence of stepsizes $(\gamma_n)_{n\in\mathbb{N}}\in\mathbb{R}_{++}$, the resulting relocated Douglas--Rachford sequence~\cite{atenas2025relocated}  $(x_n)_{n\in\mathbb{N}}$ is given by
\begin{equation}\label{eq-NDR: NDR iteration 0}
    \left\{\begin{aligned}
        w_n&=T_{\gamma_n}x_n = x_n-J_{\gamma_nA_1}x_n+J_{\gamma_nA_2}(2J_{\gamma_nA_1}x_n-x_n)\\
        x_{n+1}&=\mathcal{Q}_{\gamma_{n+1}\leftarrow\gamma_n}w_n=\frac{\gamma_{n+1}}{\gamma_n}w_n+\left(1-\frac{\gamma_{n+1}}{\gamma_n}\right)J_{\gamma_n A_1}w_n
    \end{aligned}\right.\quad\forall n\in\mathbb{N}.
\end{equation} Denote $z_0=J_{\gamma_0A_1}x_0$. Then, as observed in \cite[Proposition~4.8]{atenas2025relocated}, \eqref{eq-NDR: NDR iteration 0} can be reformulated as Algorithm~\ref{a:reloc-DR}, performing one resolvent evaluation per operator in each iteration, unlike \eqref{eq-NDR: NDR iteration 0}.

\begin{algorithm}[!ht]
\caption{Relocated Douglas--Rachford algorithm for finding a zero of $A_1+A_2$. \label{a:reloc-DR}}
\SetKwInOut{Input}{Input}
\Input{Choose $x_0 \in \mathcal{H}$ and a stepsize $ \gamma_0 \in \mathbb{R}_{++}$.}

Set $z_0 = J_{\gamma_0 A_1}x_0$. 

\For{$n=0,1,2,\dots$}{
Step 1. Intermediate step. Compute \begin{equation} \label{DR:variable-stepsize-1}
    \left\{\begin{aligned}
    y_n & = J_{\gamma_nA_2}(2z_n - x_n) \\
    w_n & = x_n - z_n + y_n. \\
    \end{aligned}\right.
\end{equation}

Step 2. Next iterate. Define $\gamma_{n+1} \in \mathbb{R}_{++}$ and compute
\begin{equation} \label{DR:variable-stepsize-2}
    \left\{\begin{aligned}
    z_{n+1} & = J_{\gamma_n A_1}w_n \\
    x_{n+1} & = \dfrac{\gamma_{n+1}}{\gamma_n} w_n + \left( 1 - \dfrac{\gamma_{n+1}}{\gamma_n} \right) z_{n+1}. 
    \end{aligned}\right.
\end{equation}

}
\end{algorithm}

Observe that when $\gamma_n = \gamma \in \mathbb{R}_{++}$ for all $n \in \mathbb{N}$, Algorithm~\ref{a:reloc-DR} recovers the  traditional Douglas--Rachford algorithm with constant stepsize. The authors in \cite{lorenz2019non,atenas2025relocated} retrieve similar convergence guarantees to the traditional setting, while allowing variable stepsizes.

We start the analysis of rate of convergence of Algorithm~\ref{a:reloc-DR} with the following lemma, which shows that fixed-point relocators of the Douglas--Rachford operator in Example~\ref{ex:FPR-DR} satisfies Assumption~\ref{assumption: Q is Lipschitz depending on gamma}.
\begin{lemma}\label{lemma: DR relocator Lipschitz}
Let $\Gamma\subseteq\mathbb{R}_{++}$ be a nonempty closed interval, $T_\gamma$ be the Douglas--Rachford operator given in \eqref{DR operator in pri}, and $Q_{\delta\leftarrow\gamma}$ be the fixed-point relocator given by \eqref{eq-NDR:relocator DR}. Then, for any nonempty bounded set $S\subseteq\cup_{\gamma\in \Gamma}\bigl(\Fix T_{\gamma}\times\{\gamma\}\bigr)$, there exists $L\geq 0$ such that 
$$ \|\mathcal{Q}_{\delta\leftarrow\gamma}x-\mathcal{Q}_{\gamma\leftarrow\gamma}x\| \leq L|\delta-\gamma| \quad\forall\delta\in\Gamma,\forall (x,\gamma)\in S. $$
In other words, Assumption~\ref{assumption: Q is Lipschitz depending on gamma}  holds.
\end{lemma}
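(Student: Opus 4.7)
The plan is to reduce the asserted Lipschitz-type bound to a uniform bound on $\|J_{\gamma A_1}x - x\|/\gamma$ over $(x,\gamma)\in S$, and then deduce the latter from nonexpansiveness of the resolvent together with the continuity statement in Remark~\ref{remark: Lipschitz continuity in resolvent}. A direct substitution of $\delta=\gamma$ into the formula \eqref{eq-NDR:relocator DR} gives $\mathcal{Q}_{\gamma\leftarrow\gamma}x = x$ for every $x\in\mathcal{H}$, and a short algebraic manipulation yields
\begin{equation*}
\mathcal{Q}_{\delta\leftarrow\gamma}x - \mathcal{Q}_{\gamma\leftarrow\gamma}x
= \frac{\gamma-\delta}{\gamma}\bigl(J_{\gamma A_1}x - x\bigr) \quad\forall \delta,\gamma\in\Gamma,\,\forall x\in\mathcal{H}.
\end{equation*}
Taking norms and pulling out $|\delta-\gamma|$ reduces the claim to finding $L\geq 0$ such that $\|J_{\gamma A_1}x - x\|/\gamma \leq L$ for every $(x,\gamma)\in S$.

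Next, I would denote by $\pi_2(S)\subseteq\Gamma$ the projection of $S$ onto the second coordinate; since $S$ is bounded, so is $\pi_2(S)$. Because $\Gamma$ is a closed interval in $\mathbb{R}_{++}$, we have $\inf\Gamma=:a>0$, so that $\gamma\geq a$ for all $(x,\gamma)\in S$ and the closure $\overline{\pi_2(S)}\subseteq [a,\sup\pi_2(S)]$ is a compact subset of $\mathbb{R}_{++}$. The triangle inequality and nonexpansiveness of $J_{\gamma A_1}$ then give
\begin{equation*}
\|J_{\gamma A_1}x - x\|\leq \|J_{\gamma A_1}x\| + \|x\| \leq 2\|x\| + \|J_{\gamma A_1}(0)\|,
\end{equation*}
and the right-hand side is uniformly bounded on $S$: $\|x\|$ is bounded because $S$ is, while continuity of $\gamma\mapsto J_{\gamma A_1}(0)$ on $\mathbb{R}_{++}$ (Remark~\ref{remark: Lipschitz continuity in resolvent}) together with compactness of $\overline{\pi_2(S)}$ yield $\sup_{\gamma\in\overline{\pi_2(S)}}\|J_{\gamma A_1}(0)\|<+\infty$. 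Dividing by $\gamma\geq a$ furnishes the required constant $L$.

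The only mildly delicate point is this last step: extracting a \emph{uniform} bound from the merely pointwise continuity of $\gamma\mapsto J_{\gamma A_1}(0)$ requires $\pi_2(S)$ to have compact closure inside $\mathbb{R}_{++}$, that is, for $\gamma$ to stay bounded away from $0$ along $S$. This is precisely what the hypothesis that $\Gamma$ is a closed interval in $\mathbb{R}_{++}$ supplies; the same hypothesis also ensures that the denominator $1/\gamma$ arising in the reduction step remains controlled, which is what allows the overall Lipschitz-type inequality to close.
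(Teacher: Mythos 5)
Your proposal is correct and follows essentially the same route as the paper: both reduce the claim to the identity $\mathcal{Q}_{\delta\leftarrow\gamma}x - x = \frac{\gamma-\delta}{\gamma}(J_{\gamma A_1}x - x)$ and then bound $\|x - J_{\gamma A_1}x\|/\gamma$ uniformly on $S$ via nonexpansiveness of the resolvent, continuity of $\gamma \mapsto J_{\gamma A_1}0$ (Remark~\ref{remark: Lipschitz continuity in resolvent}), and compactness of the relevant set of stepsizes. Your use of $\overline{\pi_2(S)}$ in place of $\Gamma$ itself is a minor (and slightly more careful) variant, but not a genuinely different argument.
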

\begin{proof}
For any $(x,\gamma)\in S$ and $\delta\in\Gamma$, we have
\begin{equation}\label{eq-NDR: Lipschitz for DR}
        \begin{aligned}
            \|\mathcal{Q}_{\delta\leftarrow\gamma}x-\mathcal{Q}_{\gamma\leftarrow\gamma}x\|
            &=\left\|\frac{\delta}{\gamma}x+\left(1-\frac{\delta}{\gamma}\right)J_{\gamma A_1}x-x\right\|\\
            &=|\delta-\gamma|\frac{\|x-J_{\gamma A_1}x\|}{\gamma}.
        \end{aligned}
    \end{equation}
We claim that $\sup_{(x,\gamma)\in S}\frac{\|x-J_{\gamma A_1}x\|}{\gamma}<+\infty$. To see this, first note that the triangle inequality combined with nonexpansiveness of $J_{\gamma A_1}$ implies
 $$ \|x-J_{\gamma A_1}x\| \leq \|x-J_{\gamma A_1}0\|+\|J_{\gamma A_1}0-J_{\gamma A_1}x\| \leq \|J_{\gamma A_1}0\|+2\|x\|. $$
As $\gamma\mapsto J_{\gamma A_1}0$ is continuous in view of Remark~\ref{remark: Lipschitz continuity in resolvent} and $\Gamma$ is compact in $\mathbb{R}$, $\sup_{\gamma\in\Gamma}\|J_{\gamma A_1}0\|<+\infty$ and, since $S$ is bounded, $\sup_{(x,\gamma)\in S}\|x\|<+\infty$. Altogether, we have
$$ \sup_{(x,\gamma)\in S}\frac{\|x-J_{\gamma A_1}x\|}{\gamma} \leq \sup_{\gamma\in\Gamma}\frac{\|J_{\gamma A_1}0\|}{\gamma}+2\sup_{(x,\gamma)\in S}\frac{\|x\|}{\gamma} <+\infty, $$
which establishes the result. 
\end{proof}

In order to apply Theorem~\ref{thm: T satisfies linear error bound}, we first show that the family of Douglas--Rachford operators is uniformly linearly regular under the assumption of strong monotonicity relative  to a set (Definition~\ref{d:relative-to-a-set}) of the operators involved.
\begin{lemma}\label{lemma: error bound for DR} 
Let $\Gamma\subseteq\mathbb{R}_{++}$ be a nonempty closed interval and $A_1,A_2\colon\mathcal{H}\setto\mathcal{H}$ be maximally monotone and paramonotone. Suppose $A_1$ or $A_2$ is $\mu$-strongly monotone relative to $\mathcal{P}:=\zer(A_1+A_2)$, and $A_1^{-1}$ or $A_2^{-1}$ is $\rho$-strongly monotone relative to $\mathcal{D}:=\zer(A_1^{-1} - A_2^{-1} \circ (- \Id)).$  Then the Douglas--Rachford operator $T_\gamma$ given by \eqref{DR operator in pri} is linearly regular with $\kappa_\gamma=4(1+\max\{\frac{1}{\gamma\mu},\frac{\gamma}{\rho}\})$ for all $\gamma\in\Gamma$. Consequently, $(T_\gamma)_{\gamma\in\Gamma}$ is uniformly linearly regular.
\end{lemma}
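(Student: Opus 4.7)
The plan is, for each fixed $\gamma\in\Gamma$ and each $x\in\mathcal{H}$, to exhibit a concrete point of $\Fix T_\gamma$ that is within $\kappa_\gamma\|x-T_\gamma x\|$ of $x$; the uniform linear regularity will then follow because $\gamma\mapsto \kappa_\gamma$ is continuous on the compact interval $\Gamma$. I will carry out the argument in the representative case where $A_1$ is $\mu$-strongly monotone relative to $\mathcal{P}$ and $A_1^{-1}$ is $\rho$-strongly monotone relative to $\mathcal{D}$; the remaining three combinations will be handled by symmetric manipulations, using projections of $y$ instead of $z$ whenever $A_2$ carries the primal strong monotonicity, and analogously on the dual side.

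First I introduce the standard Douglas--Rachford quantities $z=J_{\gamma A_1}x$, $y=J_{\gamma A_2}(2z-x)$, $u=(x-z)/\gamma\in A_1 z$, $v=(2z-x-y)/\gamma\in A_2 y$, which satisfy the identities $u+v=(z-y)/\gamma$ and $\|x-T_\gamma x\|=\|z-y\|$. By Proposition~\ref{p:T_gamma-decomp}, $\Fix T_\gamma=\mathcal{P}+\gamma\mathcal{D}$, and a short paramonotonicity argument (if $p\in\mathcal{P}$ has partner $w\in A_1 p\cap(-A_2 p)$ and $q\in\mathcal{D}$ has partner $p'\in A_1^{-1}q\cap A_2^{-1}(-q)$, then monotonicity of $A_1$ and $A_2$ forces $\langle p-p',w-q\rangle=0$, after which paramonotonicity swaps the pairings) shows that every $(p,q)\in\mathcal{P}\times\mathcal{D}$ is a Karush--Kuhn--Tucker pair, so $p+\gamma q\in\Fix T_\gamma$. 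Taking as reference point $p:=P_\mathcal{P}(z)$ and $q:=P_\mathcal{D}(u)$ then gives
\[
\dist(x,\Fix T_\gamma)\le \|x-(p+\gamma q)\|\le \|z-p\|+\gamma\|u-q\|.
\]

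The core step is to bound both $\|z-p\|$ and $\|u-q\|$ by multiples of $\|z-y\|$. Applying Definition~\ref{d:relative-to-a-set} to $A_1$ at $(z,u)\in\gra A_1$ with $q\in A_1 p$ (valid by the KKT pairing above) gives $\mu\|z-p\|^2\le \langle u-q,z-p\rangle$; applying the same definition to $A_1^{-1}$ at $(u,z)\in\gra A_1^{-1}$ with $p\in A_1^{-1}q$ gives $\rho\|u-q\|^2\le\langle z-p,u-q\rangle$. For the matching upper bound on the inner product, monotonicity of $A_2$ at $(y,v),(p,-q)\in\gra A_2$ yields $\langle y-p,v+q\rangle\ge 0$; substituting $v+q=(z-y)/\gamma-(u-q)$, writing $z-p=(z-y)+(y-p)$, and using $\|y-p\|\le\|z-p\|+\|z-y\|$ together with Cauchy--Schwarz yields
\[
\langle u-q,z-p\rangle\le \|u-q\|\,\|z-y\|+\tfrac{1}{\gamma}\bigl(\|z-p\|+\|z-y\|\bigr)\|z-y\|.
\]

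Inserting this upper bound into the two strong-monotonicity lower bounds and applying Young's inequality to the cross terms $\tfrac{1}{\gamma}\|z-p\|\,\|z-y\|$ and $\|u-q\|\,\|z-y\|$ (with weights chosen so that $\|z-p\|^2$ is paired with $\gamma\mu$ and $\gamma^2\|u-q\|^2$ with $\rho/\gamma$) decouples the two unknowns and produces quadratic estimates $\|z-p\|^2\le K_1\|z-y\|^2$ and $\gamma^2\|u-q\|^2\le K_2\|z-y\|^2$ with constants $K_1,K_2$ of order $(1+\max\{1/(\gamma\mu),\gamma/\rho\})^2$; the asymmetry between the $1/(\gamma\mu)$ and $\gamma/\rho$ contributions will produce exactly the maximum appearing in the advertised $\kappa_\gamma$. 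This Young-inequality bookkeeping, calibrated to extract the coefficient $4$, is the most tedious (but routine) part of the argument, and is where the main algebraic work lies. Substituting back into the bound on $\dist(x,\Fix T_\gamma)$ yields linear regularity of $T_\gamma$ with the stated constant; continuity of $\gamma\mapsto\kappa_\gamma$ on the compact interval $\Gamma$ then gives $\sup_{\gamma\in\Gamma}\kappa_\gamma<+\infty$, which is exactly uniform linear regularity of $(T_\gamma)_{\gamma\in\Gamma}$.
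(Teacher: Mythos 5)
Your argument is correct, but it takes a different route from the paper. The paper's proof is essentially a reduction: it uses Proposition~\ref{p:rel-to-a-set} to transfer the hypotheses to the rescaled operators ($\gamma A_i$ is $\gamma\mu$-strongly monotone relative to $\mathcal{P}_\gamma=\mathcal{P}$, and $(\gamma A_i)^{-1}$ is $\gamma^{-1}\rho$-strongly monotone relative to $\mathcal{D}_\gamma=\gamma\mathcal{D}$), and then invokes the argument of \cite[Theorem~2]{pena2021linear} verbatim, with Proposition~\ref{p:T_gamma-decomp} replacing their subdifferential-based decomposition so that paramonotone operators are covered; the stepsize dependence $\max\{\tfrac{1}{\gamma\mu},\tfrac{\gamma}{\rho}\}$ then falls out of the rescaled moduli. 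You instead rederive the underlying error bound from first principles, keeping the $\gamma$-dependence inline: the KKT-pairing of $\mathcal{P}\times\mathcal{D}$ via paramonotonicity, the choice $p+\gamma q$ with $p=P_{\mathcal{P}}(z)$, $q=P_{\mathcal{D}}(u)$, the two relative strong-monotonicity lower bounds, and the monotonicity of the remaining operator for the upper bound are all sound, and the final bookkeeping does close: writing $a=\|z-p\|$, $b=\gamma\|u-q\|$, $d=\|z-y\|$, your inequalities read $\gamma\mu\,a^2\le (a+b+d)d$ and $\tfrac{\rho}{\gamma}\,b^2\le (a+b+d)d$, and maximizing $a+b$ subject to these gives $a+b\le\bigl(4\max\{\tfrac{1}{\gamma\mu},\tfrac{\gamma}{\rho}\}+1\bigr)d\le\kappa_\gamma d$, so the advertised constant holds (with slack). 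Two parts are left as sketches and deserve a sentence each in a full write-up: the other three strong-monotonicity combinations (projecting $y$ rather than $z$, or pairing through $A_2^{-1}$, which introduces one extra $\|z-y\|$ term but still fits under $\kappa_\gamma$), and the explicit weight choice in the Young/optimization step above. What each approach buys: the paper's proof is shorter and makes the stepsize scaling structurally transparent through Propositions~\ref{p:T_gamma-decomp} and~\ref{p:rel-to-a-set}, while yours makes the lemma self-contained and verifies the constant directly rather than by analogy with \cite{pena2021linear}. (Both proofs share the paper's implicit assumption that the closed interval $\Gamma\subseteq\mathbb{R}_{++}$ is bounded, so that $\sup_{\gamma\in\Gamma}\kappa_\gamma<+\infty$.)
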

\begin{proof}
Let $\gamma\in\Gamma$. In view of Propositions~\ref{p:T_gamma-decomp} and \ref{p:rel-to-a-set}, the assumptions imply $\gamma A_1$ or $\gamma A_2$ is $\gamma\mu$-strongly monotone relative to $\mathcal{P}_\gamma = \mathcal{P}$, and $(\gamma A_1)^{-1}$ or $(\gamma A_2)^{-1}$ is $\gamma^{-1}\rho$-strongly monotone relative to $\gamma\mathcal{D} = \mathcal{D}_\gamma.$ The proof that $T_\gamma$ is linearly regular with $\kappa_\gamma=4(1+\max\{\frac{1}{\gamma\mu},\frac{\gamma}{\rho}\})$ is analogous to \cite[Theorem~2]{pena2021linear} except that  Proposition~\ref{p:T_gamma-decomp} (which applies for paramonotone operators) is used in place of \cite[Proposition~1]{pena2021linear} (which applies for subdifferentials). Since $\kappa_\gamma$  is a continuous function of $\gamma$ on $\Gamma$, it attains its maximum on the compact set $\Gamma$. Hence $(T_\gamma)_{\gamma\in\Gamma}$ is uniformly linearly regular with $\kappa=\max_{\gamma\in\Gamma}\kappa_\gamma$.
\end{proof}

We are now ready to prove the main results (Corollaries~\ref{cor: DR error bound} and \ref{cor: relocated DR when contraction})  regarding linear convergence of the relocated Douglas--Rachford algorithm (cf. \cite[Corollary 4.11]{atenas2025relocated}).  
\begin{corollary}[Rate of convergence of relocated Douglas--Rachford]\label{cor: DR error bound}
    Let $\Gamma\subseteq\mathbb{R}_{++}$ be a nonempty closed interval and $A_1,A_2\colon\mathcal{H}\setto\mathcal{H}$ be maximally monotone and paramonotone such that $\mathcal{P}:=\zer(A_1+A_2)\neq\emptyset$. Suppose $A_1$ or $A_2$ is $\mu$-strongly monotone relative to $\mathcal{P}$, and $A_1^{-1}$ or $A_2^{-1}$ is $\rho$-strongly monotone relative to $\mathcal{D}:=\zer(A_1^{-1} - A_2^{-1} \circ (- \Id)).$ Let $(\gamma_n)_{n\in\mathbb{N}}\subseteq\Gamma$ converge $R$-linearly to $\gamma^*\in\Gamma$, and $T_\gamma$ be the Douglas--Rachford operator given in \eqref{DR operator in pri}. Given $x_0\in\mathcal{H}$, generate sequences $(x_n)_{n\in\mathbb{N}}$, $(w_n)_{n\in\mathbb{N}}, (z_n)_{n\in\mathbb{N}}$ and $(y_n)_{n\in\mathbb{N}}$ according to Algorithm~\ref{a:reloc-DR}. Then the following assertions hold:
\begin{enumerate}[(i)]
    \item \label{cor:fixed-point-convergence} $(x_n)_{n\in\mathbb{N}}$ and $(w_n)_{n\in\mathbb{N}}$ converge  $R$-linearly to the same point $x$ in $\Fix T_{\gamma^*}$.
    \item \label{cor:primal-convergence} $(z_n)_{n\in\mathbb{N}}$ and $(y_n)_{n\in\mathbb{N}}$ converge  $R$-linearly to the same point $z := J_{\gamma^*A_1}x$ in $\mathcal{P}$.
    
    \item \label{cor:dual-convergence} $\bigl(\frac{x_n - z_n}{\gamma_n}\bigr)_{n\in\mathbb{N}}$ and $\bigl(\frac{w_n - y_n}{\gamma_n}\bigr)_{n\in\mathbb{N}}$  converge $R$-linearly to the  point $g := \frac{x - z}{\gamma^*}$ in $\mathcal{D}$. 
\end{enumerate}
\end{corollary}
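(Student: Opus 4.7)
The plan is to realize all three items as consequences of Theorem~\ref{thm: T satisfies linear error bound} applied to the Douglas--Rachford family $(T_\gamma)_{\gamma\in\Gamma}$ equipped with the relocator in Example~\ref{ex:FPR-DR}, supplemented by the joint Lipschitz property of the resolvents on compact sets from Remark~\ref{remark: Lipschitz continuity in resolvent}.

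First I would verify the hypotheses of Theorem~\ref{thm: T satisfies linear error bound}. Each $T_\gamma$ is $\tfrac{1}{2}$-averaged; $\Fix T_\gamma\neq\emptyset$ follows from $\mathcal{P}\neq\emptyset$ via standard Douglas--Rachford theory (and is witnessed explicitly by Proposition~\ref{p:T_gamma-decomp}); continuity of $(x,\gamma)\mapsto T_\gamma x$ is Remark~\ref{remark: Lipschitz continuity in resolvent}; uniform bounded linear regularity is Lemma~\ref{lemma: error bound for DR}; and Assumption~\ref{assumption: Q is Lipschitz depending on gamma} is Lemma~\ref{lemma: DR relocator Lipschitz}. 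The only hypothesis not imported verbatim is $\sum_n(\mathcal{L}_{\gamma_{n+1}\leftarrow\gamma_n}-1)<+\infty$; since $\mathcal{L}_{\delta\leftarrow\gamma}=\max\{1,\delta/\gamma\}$, each summand is bounded above by $|\gamma_{n+1}-\gamma_n|/\gamma_n$, and $R$-linear convergence of $(\gamma_n)$ to $\gamma^*>0$ combined with the uniform lower bound $\gamma_n\to\gamma^*>0$ makes this tail geometrically summable. Theorem~\ref{thm: T satisfies linear error bound} then yields that $(x_n)$ and $(w_n)=(T_{\gamma_n}x_n)$ both converge $R$-linearly to a common limit $x\in\Fix T_{\gamma^*}$, which is (\ref{cor:fixed-point-convergence}).

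For (\ref{cor:primal-convergence}), I would use that $(x_n,\gamma_n)$ is eventually contained in a compact subset of $\mathcal{H}\times\mathbb{R}_{++}$. By Remark~\ref{remark: Lipschitz continuity in resolvent}, on this compact set there is $L>0$ such that $\|z_n-J_{\gamma^*A_1}x\|\le L(\|x_n-x\|+|\gamma_n-\gamma^*|)$, yielding $R$-linear convergence of $(z_n)$ to $z:=J_{\gamma^*A_1}x$. The same estimate applied to $(2z_n-x_n,\gamma_n)$, which is also eventually compact, gives $y_n\to J_{\gamma^*A_2}(2z-x)$ $R$-linearly; the fixed-point identity $x=T_{\gamma^*}x$ unpacks precisely to $z=J_{\gamma^*A_1}x=J_{\gamma^*A_2}(2z-x)$, so the two limits coincide.

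For (\ref{cor:dual-convergence}), the algorithmic identity $w_n-y_n=x_n-z_n$ gives $(w_n-y_n)/\gamma_n=(x_n-z_n)/\gamma_n$, so the two sequences coincide; $R$-linear convergence of $(x_n)$, $(z_n)$, and $(\gamma_n)$ (with $\gamma^*>0$ bounded away from zero) then delivers $R$-linear convergence of both to $g:=(x-z)/\gamma^*$. To identify the limit points as elements of $\mathcal{P}$ and $\mathcal{D}$, I would read off inclusions from the resolvent characterizations at the limit: $z=J_{\gamma^*A_1}x$ gives $g\in A_1 z$, and $z=J_{\gamma^*A_2}(2z-x)$ gives $-g\in A_2 z$; adding shows $0\in(A_1+A_2)z$, hence $z\in\mathcal{P}$, while rearranging shows $z\in A_1^{-1}g\cap A_2^{-1}(-g)$, hence $0\in A_1^{-1}g-A_2^{-1}(-g)$ and $g\in\mathcal{D}$. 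The main obstacle is largely bookkeeping---tracking which sequences live in which eventual compact sets for the Lipschitz estimates, and correctly harvesting primal-dual certificates from the fixed-point identity of $T_{\gamma^*}$---together with the summability estimate for $(\mathcal{L}_{\gamma_{n+1}\leftarrow\gamma_n}-1)$, which is the sole numerical step not provided directly by an earlier lemma.
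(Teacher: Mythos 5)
Your proposal is correct and follows essentially the same route as the paper: verify the hypotheses of Theorem~\ref{thm: T satisfies linear error bound} via Lemmas~\ref{lemma: DR relocator Lipschitz} and~\ref{lemma: error bound for DR}, handle the summability of $(\mathcal{L}_{\gamma_{n+1}\leftarrow\gamma_n}-1)$ through the geometric bound on $|\gamma_{n+1}-\gamma_n|$, and then derive (ii) and (iii) from the resolvent Lipschitz estimates of Remark~\ref{remark: Lipschitz continuity in resolvent} on compact sets together with the identity $w_n-y_n=x_n-z_n$. The only cosmetic differences are that you obtain $y_n\to z$ by applying the resolvent estimate at $(2z_n-x_n,\gamma_n)$ and extract $z\in\mathcal{P}$, $g\in\mathcal{D}$ directly from the fixed-point inclusions, whereas the paper uses $y_n=w_n-x_n+z_n$ and cites its predecessor for $z\in\mathcal{P}$; both are valid.
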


\begin{proof}
\begin{enumerate}[(\ref{cor:fixed-point-convergence})]
    \item First, 
    the operators in $(T_\gamma)_{\gamma\in\Gamma}$ are $\frac{1}{2}$-averaged nonexpansive \cite[Remark~4.34(iii)]{bauschke2017convex}. Furthermore, by Lemma~\ref{lemma: error bound for DR}, $(T_{\gamma})_{\gamma\in\Gamma}$ is uniformly linearly regular.  Since $(\gamma_n)_{n\in\mathbb{N}}$ converges $R$-linearly to $\gamma^*$, then there exists $C\in\mathbb{R}_{+}$ and $r\in(0,1)$ such that, for all $n\in\mathbb{N}$, we have
\begin{equation} \label{eq:rate-diff}
    (\gamma_{n+1}-\gamma_n)_+ \leq |\gamma_{n+1}-\gamma_n|\leq|\gamma_{n+1}-\gamma^*|+|\gamma^*-\gamma_n|\leq Cr^{n+1}+Cr^n\leq C(1+r)r^n.
\end{equation}
In addition, since $(\gamma_n)_{n\in\mathbb{N}}$ contained in a closed interval $\Gamma\subset\mathbb{R}_{++}$, there exists $\gamma_{\rm low}>0$ such that $\gamma_n\geq\gamma_{\rm low}$ for all $n\in\mathbb{N}$. 
Thus,
$$\sum_{n=0}^\infty(\mathcal{L}_{\gamma_{n+1}\leftarrow\gamma_n}-1)=\sum_{n=0}^\infty\frac{(\gamma_{n+1}-\gamma_n)_+}{\gamma_n}\leq\frac{C(1+r)}{\gamma_{\rm low}} \sum_{n=0}^\infty r^n=\frac{C(1+r)}{\gamma_{\rm low}(1-r)}<+\infty.$$ 
     
    Moreover, by Lemma~\ref{lemma: DR relocator Lipschitz}, Assumption~\ref{assumption: Q is Lipschitz depending on gamma} holds and $(x,\gamma)\mapsto T_{\gamma}x$ is continuous by \cite[Corollary~3.6]{atenas2025relocated}. Since $\mathcal{P}\neq\emptyset$, it follows that $\Fix T_{\gamma}\neq\emptyset$ for all $\gamma\in\Gamma$ (see, for example, \cite[Lemma~4.2]{malitsky2023resolvent}).  The result then follows from Theorem~\ref{thm: T satisfies linear error bound}.
    
    \item[(\ref{cor:primal-convergence})] By \cite[Corollary~4.11]{atenas2025relocated}, $z=J_{\gamma^* A_{1}}x\in\mathcal{P}$.
    Set $S:=\{(x_n,\gamma_n):n\in\mathbb{N}\}\cup\{(x,\gamma^*)\}$. Since for all $n\in \mathbb{N}$, $x_n\in\mathcal{H}$, $\gamma_n\in\Gamma\subseteq\mathbb{R}_{++}$  and the limit satisfies $(x,\gamma^*)\in\mathcal{H}\times\mathbb{R}_{++}$, we have, $S\subset\mathcal{H}\times\mathbb{R}_{++}$. As $(x_n,\gamma_n)_{n\in\mathbb{N}}$ converges strongly to $(x,\gamma^*)$, then $S$ is compact \cite[Theorem 2.38]{aliprantis2006infinite}.
Hence by Remark~\ref{remark: Lipschitz continuity in resolvent}, there exists $L>0$ such that $(x,\gamma)\mapsto J_{\gamma A_1}x$ is $L$-Lipschitz on $S$. Since $J_{\gamma_n A_{1}}$ is nonexpansive, we then obtain
    \begin{equation}\label{eq-NDR:7}
    \begin{aligned}
        \|z_n-z\|&=\|J_{\gamma_nA_1}x_n-J_{\gamma^*A_1}x\|\\
        &\leq\|J_{\gamma_nA_1}x_n-J_{\gamma_nA_1}x\|+\|J_{\gamma_nA_1}x-J_{\gamma^*A_1}x\|\\
        &\leq\|x_n-x\|+L|\gamma_n-\gamma^*|.
        \end{aligned}
    \end{equation}
 Since  $(x_n)_{n\in\mathbb{N}}$ and $(\gamma_n)_{n\in\mathbb{N}}$ converge $R$-linearly to $x$ and $\gamma^*$, respectively, we conclude that $(z_n)_{n\in\mathbb{N}}$ converges $R$-linearly to $z$. Finally, since, $w_n=x_n-z_n+y_n$, we have
$$\|y_n-z\|=\|w_n-x_n+z_n-z\|\leq\|w_n-x\|+\|x_n-x\|+\|z_n-z\|,$$
from which it follows that $(y_n)_{n\in\mathbb{N}}$ also converges $R$-linearly to $z$.

\item[(\ref{cor:dual-convergence})] 

Since $(y,\gamma)\mapsto J_{\gamma A_i}y$ is continuous for $i=1,2$ by Remark~\ref{remark: Lipschitz continuity in resolvent},  $(x_n)_{n\in\mathbb{N}}$ and  $(w_n)_{n\in\mathbb{N}}$ converge strongly to $x$, and $(z_n)_{n\in\mathbb{N}}$ and $(y_n)_{n\in\mathbb{N}}$ converge strongly to $z$,  taking the limit on both sides as $n\to\infty$ in the first identity in \eqref{DR:variable-stepsize-1} and the first line in \eqref{DR:variable-stepsize-2} yields $z=J_{\gamma^*A_2}(2z-x)$ and $z=J_{\gamma^*A_1}x$. Denote $g:=(x-z)/\gamma^*$. Then, using the definition of the resolvent,  $-z\in -A_2^{-1}(-g)$ and $z\in A_1^{-1}g$ hold, from which it follows that $g\in\mathcal{D}$.
Furthermore, since $(\gamma_n)_{n \in \mathbb{N}}$ is convergent in $\Gamma$, there exists $\gamma_{\rm low} >0$ such that $\gamma_n \geq \gamma_{\rm low}$ for all $n \in \mathbb{N}$. Hence, 
\begin{equation*}
    \left\|\frac{x_n - z_n}{\gamma_n} - g\right\| \leq \frac{1}{\gamma_{\rm low}}\bigl(\|x_n -x\| + \|z_n - z\|\bigr) +  \frac{1}{\gamma_{\rm low}\gamma^*}|\gamma_n - \gamma^*| \|x - z\|, 
\end{equation*}
from which we deduce $R$-linear convergence of $\bigl(\frac{x_n - z_n}{\gamma_n}\bigr)_{n\in\mathbb{N}}$. In view of the second step in \eqref{DR:variable-stepsize-1}, we have $\left\|\frac{w_n - y_n}{\gamma_n} - g \right\| = \left\|\frac{x_n - z_n}{\gamma_n} - g \right\|$ from which $R$-linear convergence of  $\bigl(\frac{w_n - y_n}{\gamma_n}\bigr)_{n\in\mathbb{N}}$ follows. This completes the proof.
\end{enumerate}  
\end{proof}

\begin{remark}
A closer look at the proof of Corollary~\ref{cor: DR error bound} reveals that Attouch--Th\`era duality is only used in combination with strong monotonicity of $A_1$ and $A_2^{-1}$ (resp.\ strong monotonicity of $A_1^{-1}$ and $A_2$) to deduce uniform linear regularity of $(T_{\gamma})_{\gamma\in\Gamma}$. Thus, the conclusions of Corollary~\ref{cor: DR error bound} would remain unchanged after replacing the assumptions related to paramonotonicity and strong monotonicity with uniform linear regularity of $(T_{\gamma})_{\gamma\in\Gamma}$.
\end{remark}

When the Douglas--Rachford operators are contractions, we can apply Corollary~\ref{cor: linear convergence for contraction} to deduce linear rates of convergence. In the following result, we show under what conditions this assumption is satisfied.
\begin{lemma}\label{lemma:uniform contraction of DR}
     Suppose that $A_1:\mathcal{H}\rightarrow\mathcal{H}$ is monotone and $L$-Lipschitz, and $A_2:\mathcal{H}\setto\mathcal{H}$ is $\mu$-strongly maximally monotone. Let $\Gamma\subseteq\mathbb{R}_{++}$ be a nonempty closed interval, and $T_\gamma$ be the Douglas--Rachford operator given in \eqref{DR operator in pri}. Then there exists a $\beta\in[0,1)$ such that  $(T_{\gamma})_{\gamma\in\Gamma}$ is a family of $\beta$-contractions.
\end{lemma}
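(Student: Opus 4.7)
The plan is to show that each individual operator $T_\gamma$ is a contraction with a ratio $\beta_\gamma\in[0,1)$ that depends continuously on $\gamma$; since $\Gamma$ is a closed interval in $\mathbb{R}_{++}$ (hence compact, as used in Lemma~\ref{lemma: DR relocator Lipschitz}), the continuous function $\gamma\mapsto\beta_\gamma$ attains a maximum $\beta<1$, and this $\beta$ serves as a uniform contraction constant.

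For fixed $\gamma\in\Gamma$ and arbitrary $x,y\in\mathcal{H}$, I will introduce the shorthand $a:=J_{\gamma A_1}x$, $b:=J_{\gamma A_1}y$, $p:=J_{\gamma A_2}(2a-x)$, $q:=J_{\gamma A_2}(2b-y)$ so that $T_\gamma x - T_\gamma y = (x-y) - (a-b) + (p-q)$. I will then exploit three ingredients in turn: (a) strong monotonicity of $A_2$ applied to the inclusions $((2a-x)-p)/\gamma\in A_2p$ and $((2b-y)-q)/\gamma\in A_2 q$, which yields
\begin{equation*}
\langle (x-y)-(a-b),\,p-q\rangle \;\leq\; \langle a-b,\,p-q\rangle - (1+\gamma\mu)\|p-q\|^2;
\end{equation*}
(b) firm nonexpansivity of $J_{\gamma A_1}$, giving $\|(x-y)-(a-b)\|^2\leq\|x-y\|^2-\|a-b\|^2$; and (c) $L$-Lipschitz continuity of $A_1$, which via $A_1 a=(x-a)/\gamma$ and $A_1 b=(y-b)/\gamma$ yields $\|(x-y)-(a-b)\|\leq\gamma L\|a-b\|$ and, by the triangle inequality, the reverse bound $\|a-b\|\geq\|x-y\|/(1+\gamma L)$.

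Expanding $\|T_\gamma x - T_\gamma y\|^2 = \|(x-y)-(a-b)\|^2 + 2\langle (x-y)-(a-b),p-q\rangle + \|p-q\|^2$, substituting (a), and completing the square in $\|p-q\|$ to bound $2\langle a-b,p-q\rangle - (1+2\gamma\mu)\|p-q\|^2 \leq \|a-b\|^2/(1+2\gamma\mu)$, I obtain
\begin{equation*}
\|T_\gamma x - T_\gamma y\|^2 \;\leq\; \|(x-y)-(a-b)\|^2 + \tfrac{1}{1+2\gamma\mu}\|a-b\|^2.
\end{equation*}
Plugging in (b) reduces this to $\|x-y\|^2 - \tfrac{2\gamma\mu}{1+2\gamma\mu}\|a-b\|^2$, and finally (c) lets me eliminate $\|a-b\|$ in favor of $\|x-y\|$, producing the clean estimate
\begin{equation*}
\|T_\gamma x - T_\gamma y\|^2 \;\leq\; \Bigl(1 - \tfrac{2\gamma\mu}{(1+2\gamma\mu)(1+\gamma L)^2}\Bigr)\|x-y\|^2 \;=:\; \beta_\gamma^2\,\|x-y\|^2,
\end{equation*}
with $\beta_\gamma\in[0,1)$ for every $\gamma>0$. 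A continuity argument on the compact interval $\Gamma$ then delivers $\beta:=\max_{\gamma\in\Gamma}\beta_\gamma<1$, completing the proof.

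The main technical obstacle is finding the right way to combine the three ingredients so that neither $\|p-q\|$ nor $\|a-b\|$ is left uncontrolled; in particular, the use of strong monotonicity of $A_2$ alone only yields nonexpansiveness, so it is essential to couple it with the firm nonexpansivity of $J_{\gamma A_1}$ and to use Lipschitz continuity of $A_1$ as a \emph{lower} bound on $\|a-b\|$ (rather than the more familiar upper bound). A minor point is that the compactness of $\Gamma$ is crucial: the contraction factor $\beta_\gamma$ tends to $1$ as $\gamma\to\infty$, so no uniform contraction would be possible on unbounded $\Gamma$.
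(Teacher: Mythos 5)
Your proposal is correct, but it takes a genuinely different route from the paper. The paper's proof is essentially a citation: for each fixed $\gamma\in\Gamma$ it invokes \cite[Theorem~4.3]{moursi2019douglas} applied to $\gamma A_1$ (monotone, $\gamma L$-Lipschitz) and $\gamma A_2$ ($\gamma\mu$-strongly monotone) to obtain an explicit contraction factor $\beta_\gamma\in(0,1)$, and then, exactly as you do, uses continuity of $\gamma\mapsto\beta_\gamma$ and compactness of the closed interval $\Gamma\subseteq\mathbb{R}_{++}$ to set $\beta:=\max_{\gamma\in\Gamma}\beta_\gamma<1$. You instead re-derive a contraction factor from scratch: your steps (a)--(c) are all sound — the strong-monotonicity inequality for $A_2$, firm nonexpansiveness of $J_{\gamma A_1}$ (available since $A_1$, being single-valued, continuous and monotone on all of $\mathcal{H}$, is maximally monotone, a point worth stating explicitly), the lower bound $\|a-b\|\geq\|x-y\|/(1+\gamma L)$ from Lipschitz continuity of $A_1$, and the Young-inequality step eliminating $\|p-q\|$ — and they combine correctly to give $\beta_\gamma^2=1-\tfrac{2\gamma\mu}{(1+2\gamma\mu)(1+\gamma L)^2}<1$. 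What each approach buys: yours is self-contained and elementary, at the price of a looser contraction constant than the sharper $\beta_\gamma$ of \cite[Theorem~4.3]{moursi2019douglas} quoted in the paper; the paper's is shorter and records the tighter rate, but outsources all the work. For the purposes of this lemma (existence of some uniform $\beta<1$) either constant suffices, and your closing observations — that compactness of $\Gamma$ is essential because $\beta_\gamma\to1$ as $\gamma\to0^+$ or $\gamma\to\infty$ — are consistent with the paper's standing convention that $\Gamma$ is a compact interval.
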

\begin{proof}
    Let $\gamma\in\Gamma$. Since $\gamma A_1$ is monotone and $(\gamma L)$-Lipschitz and $\gamma A_2$ is $(\gamma\mu)$-strongly monotone, \cite[Theorem~4.3]{moursi2019douglas} implies that $T_\gamma$ is a $\beta_\gamma$-contraction with 
    $$ \beta_\gamma := \frac{1}{2(1+\gamma\mu)}\left(\sqrt{2\gamma^2\mu^2+2\gamma\mu+1+2\left(1-\frac{1}{(1+\gamma L)^2}-\frac{1}{1+\gamma^2L^2}\right)\gamma\mu(1+\gamma\mu)}+1\right)$$ and $\beta_\gamma\in(0,1)$. Since $\beta_\gamma$ is a continuous function of $\gamma$, it attains its maximum value on the compact set $\Gamma$. Hence $(T_\gamma)_{\gamma\in\Gamma}$ is a family of $\beta$-contractions with $\beta:=\max_{\gamma\in\Gamma}\beta_\gamma<1$.
    
\end{proof}

\begin{corollary}[Rate of convergence of relocated Douglas--Rachford of contractions]\label{cor: relocated DR when contraction}
    Let $\Gamma\subseteq\mathbb{R}_{++}$ be a nonempty closed interval. Suppose $A_1\colon\mathcal{H}\to\mathcal{H}$ is maximally monotone and $L$-Lipschitz and $A_2\colon\mathcal{H}\setto\mathcal{H}$ is $\mu$-strongly maximally monotone with $\mathcal{P}:=\zer(A_1+A_2)\neq\emptyset$. Denote $\mathcal{D}:=\zer(A_1^{-1} - A_2^{-1} \circ (- \Id))$.  Let $(\gamma_n)_{n\in\mathbb{N}}\subseteq\Gamma$ converge $R$-linearly to $\gamma^*\in\Gamma$, and $T_\gamma$ be the Douglas--Rachford operator given in \eqref{DR operator sec 1}. Given $x_0\in\mathcal{H}$, generate sequences $(x_n)_{n\in\mathbb{N}}$, $(w_n)_{n\in\mathbb{N}}, (z_n)_{n\in\mathbb{N}}$ and $(y_n)_{n\in\mathbb{N}}$ according to Algorithm~\ref{a:reloc-DR}. Then the following assertions hold:
\begin{enumerate}[(i)]
    \item \label{cor:fixed-point-convergence for contraction} $(x_n)_{n\in\mathbb{N}}$ and $(w_n)_{n\in\mathbb{N}}$ converge  $R$-linearly to the same point $x$ in $\Fix T_{\gamma^*}$.
    \item \label{cor:primal-convergence for contraction} $(z_n)_{n\in\mathbb{N}}$ and $(y_n)_{n\in\mathbb{N}}$ converge  $R$-linearly to the same point $z := J_{\gamma^*A_1}x$ in $\mathcal{P}$.
    \item \label{cor:dual-convergence for contraction} $\bigl(\frac{x_n - z_n}{\gamma_n}\bigr)_{n\in\mathbb{N}}$ and $\bigl(\frac{w_n - y_n}{\gamma_n}\bigr)_{n\in\mathbb{N}}$  converge $R$-linearly to the  point $g := \frac{x - z}{\gamma^*}$ in $\mathcal{D}$.
\end{enumerate}
\end{corollary}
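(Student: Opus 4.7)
The plan is to reduce this corollary to Corollary~\ref{cor: linear convergence for contraction} for part (i) and then repeat, with minor changes, the computations from the proof of Corollary~\ref{cor: DR error bound} for parts (ii) and (iii). By Lemma~\ref{lemma:uniform contraction of DR}, the hypotheses on $A_1$ and $A_2$ imply that $(T_\gamma)_{\gamma\in\Gamma}$ is a family of $\beta$-contractions for some $\beta\in[0,1)$; in particular, each $T_\gamma$ admits a unique fixed-point so that $\Fix T_\gamma \neq \emptyset$ for every $\gamma\in\Gamma$. Assumption~\ref{assumption: Q is Lipschitz depending on gamma} is Lemma~\ref{lemma: DR relocator Lipschitz}, and the continuity of $(x,\gamma)\mapsto T_\gamma x$ is \cite[Corollary~3.6]{atenas2025relocated}.

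The only remaining hypothesis of Corollary~\ref{cor: linear convergence for contraction} to check is summability of $\mathcal{L}_{\gamma_{n+1}\leftarrow\gamma_n}-1$. From the explicit formula $\mathcal{L}_{\delta\leftarrow\gamma}=\max\{1,\delta/\gamma\}$ in Example~\ref{ex:FPR-DR} we obtain
\[
\mathcal{L}_{\gamma_{n+1}\leftarrow\gamma_n}-1 \;=\; \frac{(\gamma_{n+1}-\gamma_n)_+}{\gamma_n} \;\leq\; \frac{(\gamma_{n+1}-\gamma_n)_+}{\gamma_{\mathrm{low}}},
\]
where $\gamma_{\mathrm{low}}:=\inf_{n\in\mathbb{N}}\gamma_n>0$ exists since $(\gamma_n)_{n\in\mathbb{N}}$ converges to $\gamma^*>0$. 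The bound \eqref{eq:rate-diff} (already established in the proof of Corollary~\ref{cor: DR error bound} using only $R$-linear convergence of the stepsizes) gives $\sum_n(\gamma_{n+1}-\gamma_n)_+<+\infty$, yielding the required summability. Applying Corollary~\ref{cor: linear convergence for contraction} then yields part (i), with $(w_n)=(T_{\gamma_n}x_n)$ and $(x_n)$ converging $R$-linearly to the same point $x\in\Fix T_{\gamma^*}$.

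For parts (ii) and (iii), the arguments are verbatim those used in the proof of parts (\ref{cor:primal-convergence}) and (\ref{cor:dual-convergence}) of Corollary~\ref{cor: DR error bound}; the only substantive difference is that $R$-linear convergence of $(x_n)$ and $(w_n)$ is now provided by part (i) rather than by uniform linear regularity. In particular, the identification $z=J_{\gamma^* A_1}x\in\mathcal{P}$ still follows from \cite[Corollary~4.11]{atenas2025relocated}, so paramonotonicity of the operators is not needed here; the $R$-linear rate for $(z_n)$ is obtained by inserting $J_{\gamma_n A_1}x$ and combining nonexpansiveness of $J_{\gamma_n A_1}$ with Lipschitz continuity of $(y,\gamma)\mapsto J_{\gamma A_1}y$ on the compact set $\{(x_n,\gamma_n):n\in\mathbb{N}\}\cup\{(x,\gamma^*)\}$ (cf.\ \eqref{eq-NDR:7}); the rate for $(y_n)$ follows from $w_n=x_n-z_n+y_n$; and the rate for the dual iterates follows from the uniform lower bound $\gamma_n\geq\gamma_{\mathrm{low}}$ together with $w_n-y_n=x_n-z_n$.

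Since parts (ii) and (iii) are essentially quotations of the earlier argument, the only genuinely new work is the reduction in part (i). There is no real obstacle here: the contraction property is exactly Lemma~\ref{lemma:uniform contraction of DR}, the Lipschitz-relocator condition is Lemma~\ref{lemma: DR relocator Lipschitz}, and the summability of $\mathcal{L}_{\gamma_{n+1}\leftarrow\gamma_n}-1$ is obtained by combining $R$-linear convergence of $(\gamma_n)$ with the uniform positive lower bound $\gamma_{\mathrm{low}}$, as above.
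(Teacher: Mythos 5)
Your proposal is correct and follows essentially the same route as the paper: invoke Lemma~\ref{lemma:uniform contraction of DR} to get a uniform contraction factor, apply Corollary~\ref{cor: linear convergence for contraction} (in place of Theorem~\ref{thm: T satisfies linear error bound}) for part (i), and then repeat the arguments of parts (\ref{cor:primal-convergence}) and (\ref{cor:dual-convergence}) of Corollary~\ref{cor: DR error bound} verbatim. Your explicit verification of $\sum_{n}(\mathcal{L}_{\gamma_{n+1}\leftarrow\gamma_n}-1)<+\infty$ via $\mathcal{L}_{\gamma_{n+1}\leftarrow\gamma_n}-1=(\gamma_{n+1}-\gamma_n)_+/\gamma_n$ and the lower bound $\gamma_{\mathrm{low}}$, and your observation that $\Fix T_\gamma\neq\emptyset$ already follows from the contraction property, are details the paper leaves implicit, but the substance is identical.
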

\begin{proof}
 By Lemma~\ref{lemma:uniform contraction of DR}, there exists $\beta\in[0,1)$ such that $(T_\gamma)_{\gamma\in\Gamma}$ is a family of $\beta$-contractions. The remainder of the proof is the same as Corollary~\ref{cor: DR error bound} except that Corollary~\ref{cor: linear convergence for contraction} is used in place of Theorem~\ref{thm: T satisfies linear error bound} to deduce $R$-linear convergence of $(x_n)_{n\in\mathbb{N}}$.
\end{proof}

\begin{remark}\label{contrast to NDR}
Some comments regarding the convergence results of the relocated Douglas--Rachford algorithm and the non-stationary Douglas--Rachford algorithm from \cite{liang2017local}  are in order. 

First, let us compare the assumptions on the stepsize sequences in the two settings. Consider the relaxation parameters in \cite{liang2017local} satisfying $\lambda_n=1$ for all $n \in \mathbb{N}$, and denote $(\hat{\gamma}_n)_{n \in \mathbb{N}}$ the sequence of stepsizes in \cite{liang2017local} converging to $\hat{\gamma}\in \mathbb{R}_{++}$.\begin{enumerate}[(i)]
    \item The summability condition $\sum_{n \in \mathbb{N}} (\mathcal{L}_{\gamma_{n+1}\gets\gamma_n}-1) < +\infty$ in Theorem~\ref{thm:Convergence of fixed-point relocator} (and Theorem~\ref{thm: T satisfies linear error bound}), in view of \cite[Corollary 4.11]{atenas2025relocated}, reduces to $\sum_{n \in \mathbb{N}} (\gamma_{n+1}-\gamma_n)_+ < +\infty$. In Theorem~\ref{thm:Convergence of fixed-point relocator}, the sequence $(\gamma_n)_{n \in \mathbb{N}}$ is convergent, then by \cite[Remark~4.10]{atenas2025relocated}, the summability condition is equivalent to $\sum_{n \in \mathbb{N}} |\gamma_{n+1}-\gamma_n| < +\infty$. Moreover, as argued in \cite[Remark~4.1(iii)]{liang2017local}, since $\inf_{n \in \mathbb{N}} \lambda_n >0$, then \cite[(H.3)]{liang2017local} reads $\sum_{n \in \mathbb{N}} |\hat{\gamma}_n-\hat{\gamma}| < +\infty$. These two conditions are, in general, not equivalent. Indeed, consider $\gamma^* >0$ and $\gamma_n = \gamma^* + \frac{1}{n}$ for all $n \in \mathbb{N}$. Then $\sum_{n \in \mathbb{N}} |\gamma_{n+1}-\gamma_n| = \sum_{n \in \mathbb{N}} \frac{1}{n(n+1)} < +\infty$, but $\sum_{n \in \mathbb{N}} |\gamma_n-\gamma^*| = \sum_{n \in \mathbb{N}} \frac{1}{n} = +\infty $.
    
    \item If convergence of the sequence of stepsizes happens at a $R$-linear rate, then the situation above changes. Let $C \in \mathbb{R}_+$ and $\eta\in(0,1)$ such that for all $n \in \mathbb{N}$, $|\gamma_n - \gamma^*| \leq C \eta^n$. In \cite[Theorem~6.1]{liang2017local}, this condition is stated as $|\hat{\gamma}_n - \hat{\gamma}| = \mathcal{O}(\eta^n)$. This assumption implies $\sum_{n \in \mathbb{N}}|\gamma_n - \gamma^*| < +\infty$, and together with \eqref{eq:rate-diff}, also yields $\sum_{n \in \mathbb{N}} |\gamma_{n+1}-\gamma_n| < +\infty$. In this manner, in the setting of Corollary~\ref{cor: relocated DR when contraction} and  \cite[Theorem~6.1]{liang2017local}, the assumption on the sequence of stepsizes are equivalent. 
\end{enumerate}

Second, let us comment on the problem assumptions in the two settings. For the non-stationary Douglas--Rachford algorithm, \cite[Theorem~6.1]{liang2017local} establishes local linear convergence by assuming that the structured convex optimization problem is \emph{partly smooth} relative to $C^2$-manifolds, whereas Corollary~\ref{cor: relocated DR when contraction} establishes global $R$-linear convergence of the relocated Douglas--Rachford algorithm under the assumptions of Lipschitz continuity and strong monotonicity.

\end{remark}

\begin{remark}\label{linear convergence of NDR}
    As discussed above, the relocated Douglas--Rachford algorithm is also known as the non-stationary Douglas--Rachford algorithm in \cite{lorenz2019non}. Hence, Corollaries~\ref{cor: DR error bound} and~\ref{cor: relocated DR when contraction} give conditions for $R$-linear convergence of the non-stationary Douglas--Rachford algorithm.  For further details on the relationship between the two algorithms, see \cite[Remark 4.12]{atenas2025relocated}.
\end{remark}

\section{The relocated Malitsky--Tam algorithm} \label{s:MT}
In this section, we use the result of Section~\ref{section: 3} to derive linear convergence of a relocated resolvent splitting algorithm introduced in \cite{atenas2025relocated}, which is based on the resolvent splitting algorithm introduced by Malitsky and Tam \cite{malitsky2023resolvent}. Let $N\geq 2$, and $A_1, \dots, A_N: \mathcal{H} \setto \mathcal{H}$ be maximally monotone operators. The method in \cite{malitsky2023resolvent} aims to \begin{equation*}
    \text{find~} x\in \mathcal{H}, \text{~such that~} 0 \in (A_1 + \dots + A_N)x,
\end{equation*} by performing separate resolvent evaluations on each $A_i$, $i=1, \dots, N$, in each iteration. The algorithmic operator is defined as follows. Given a parameter $\theta \in (0,1)$, and a stepsize $\gamma\in\mathbb{R}_{++}$, consider the operator $T_\gamma:\mathcal{H}^{N-1}\rightarrow\mathcal{H}^{N-1}$ given by\begin{equation}\label{operator of MT}
        T_\gamma \mathbf{x}=\mathbf{x}+\theta\begin{pmatrix}
        z^2-z^1\\\vdots\\z^N-z^{N-1}
    \end{pmatrix} \quad \forall \mathbf{x} = (x^1, \dots, x^{N-1})\in \mathcal{H}^{N-1},
    \end{equation}
    where $z^1,\dots,z^N\in\mathcal{H}$ are given by
    \begin{equation}\label{def of z of MT}
    \left\{\begin{aligned}
        z^1&=J_{\gamma A_1}x^1\\
        z^i&=J_{\gamma A_i}(z^{i-1}+x^i-x^{i-1})&&\text{for }i=2,\dots,N-1\\
        z^N&=J_{\gamma A_N}(z^1+z^{N-1}-x^{N-1}).
    \end{aligned}\right.
\end{equation} For all $\gamma \in \mathbb{R}_{++}$ \cite[Lemma 2]{malitsky2023resolvent}, \begin{equation}  \label{eq:MT-Fix-zer-1}
    \Fix T_\gamma \neq \varnothing \iff \zer(A_1 + \dots + A_N) \neq \varnothing,
\end{equation}and\begin{equation} \label{eq:MT-Fix-zer-2}
\begin{aligned}
    &\mathbf{x} = (x^1, \dots, x^{N-1}) \in \Fix T_\gamma \iff z \in \zer(A_1 + \dots + A_N),\\
    &\text{where~} \left\{\begin{aligned}
        &z = J_{\gamma A_1}x^1\\
        &z = J_{\gamma A_i}(x^i - x^{i-1} + z) \text{~for }i=2,\dots,N-1\\
        &z = J_{\gamma A_N}(2z - x^{N-1}).
    \end{aligned}\right.
\end{aligned}
\end{equation} Furthermore, the sequence $(\mathbf{x}_n)_{n \in \mathbb{N}}$, generated via $\mathbf{x}_{n+1} = T_\gamma \mathbf{x}_n$ for all $n \in \mathbb{N}$, converges weakly to a point $\mathbf{x} \in \Fix T_\gamma$, and the sequence $(\mathbf{z}_n)_{n \in \mathbb{N}}$, with $\mathbf{z}_n^i$ defined in \eqref{def of z of MT} for all $n \in \mathbb{N}$ and $i = 1, \dots, N-1$, converges weakly to a point $(z, \dots, z) \in \mathcal{H}^{N}$, where $z \in \zer(A_1 + \dots + A_N)$ \cite[Theorem 2]{malitsky2023resolvent}.

In \cite[Section 5]{atenas2025relocated}, the authors retrieve similar convergence guarantees for a variable stepsize version of the algorithm, described as follows. As shown in \cite[Proposition 5.5]{atenas2025relocated}, for $\gamma, \delta \in \mathbb{R}_{++}$, the operator $\mathcal{\Tilde{Q}}_{\delta\leftarrow\gamma}=(\mathcal{\Tilde{Q}}^1_{\delta\leftarrow\gamma},\dots,\mathcal{\Tilde{Q}}^{N-1}_{\delta\leftarrow\gamma}):\mathcal{H}^{N-1}\rightarrow\mathcal{H}^{N-1}$ defines a fixed-point relocator for $T_\gamma$ in \eqref{operator of MT}-\eqref{def of z of MT}, where, for $\mathbf{x} = (x^1, \dots, x^{N-1}) \in \mathcal{H}^{N-1}$,
\begin{equation}\label{relocator of MT}
    \left\{\begin{aligned}
        \mathcal{\Tilde{Q}}^1_{\delta\leftarrow\gamma}\mathbf{x}&=\frac{\delta}{\gamma}x^1+\left(1-\frac{\delta}{\gamma}\right)J_{\gamma A_1}x^1 &&\\
        \mathcal{\Tilde{Q}}^i_{\delta\leftarrow\gamma} \mathbf{x}&=\frac{\delta}{\gamma}(x^i-x^1)+\mathcal{\Tilde{Q}}^1_{\delta\leftarrow\gamma}\mathbf{x} &&\text{for~}i=2,\dots,N-1.
    \end{aligned}\right.
\end{equation}
Let $\mathbf{x}_0=(x_0^1,\dots,x_0^{N-1})\in\mathcal{H}^{N-1}$, $(\gamma_n)_{n\in\mathbb{N}}$ be a sequence in $\mathbb{R}_{++}$, and denote $z_0^1=J_{\gamma_0A_1}x_0^1$. As explained in \cite[Proposition~5.7]{atenas2025relocated}, the relocated version of the algorithm based on \eqref{operator of MT}-\eqref{def of z of MT} can be written as Algorithm~\ref{a:reloc-MT}, preserving the number of per-iteration resolvent evaluations of \cite[Algorithm 1]{malitsky2023resolvent}.

\begin{algorithm}[!ht]
\caption{Relocated resolvent splitting  for finding a zero of $\sum_{i=1}^NA_i$, $N\geq 2$. \label{a:reloc-MT}}
\SetKwInOut{Input}{Input}
\Input{Choose $ x_{0}^1, \dots, x_{0}^{N-1} \in \mathcal{H}$, a stepsize $\gamma_0 \in \mathbb{R}_{++}$ and a parameter $\theta \in]0,1[$.}

Define $z_{0}^1 = J_{\gamma_0 A_1}(x_{0}^1)$.

\For{$n=0,1,\dots$}{
Step 1. Intermediate step. Compute \begin{equation*} 
    \mathbf{w}_n = \mathbf{x}_n + \theta \begin{pmatrix}
        z_{n}^2 - z_{n}^1\\ \vdots \\ z_{n}^N - z_{n}^{N-1}
    \end{pmatrix}
\end{equation*} where $z_{n}^1, \dots, z_{n}^N \in \mathcal{H}$ are given by \begin{equation*} 
     \left\{\begin{aligned}
         z_{n}^i &= J_{\gamma_n A_i}(x_{n}^i + z_{n}^{i-1} - x_{n}^{i-1}) & \text{~for~} i = 2, \dots, N-1\\
         z_{n}^N &= J_{\gamma_n A_N}(z_{n}^1 + z_{n}^{N-1} - x_{n}^{N-1}).&
     \end{aligned}\right.
\end{equation*}

Step 2. Next iterate. Define $\gamma_{n+1} \in \mathbb{R}_{++}$ and compute
\begin{equation} \label{DR:variable-stepsize-N} 
    \begin{cases}
    z_{n+1}^1  = J_{\gamma_n A_1}w_{n}^1\\
    x_{n+1}^1  = \dfrac{\gamma_{n+1}}{\gamma_n} w_{n}^1 + \left( 1 - \dfrac{\gamma_{n+1}}{\gamma_n}\right) z_{n+1}^1
    \end{cases}
\end{equation}For $i = 2, \dots, N-1$, 
\begin{equation*} 
    x_{n+1}^i  = \dfrac{\gamma_{n+1}}{\gamma_n} (w_{n}^i - w_{n}^1) + x_{n+1}^1.
\end{equation*}
}
\end{algorithm}

We begin our analysis of Algorithm~\ref{a:reloc-MT} by showing that 
the fixed-point relocator in \eqref{relocator of MT} satisfies Assumption~\ref{assumption: Q is Lipschitz depending on gamma}.
\begin{lemma}\label{lemma: MT relocator Lipschitz}

Let $\Gamma\subseteq\mathbb{R}_{++}$ be a nonempty closed interval, $T_\gamma$ be the operator given in \eqref{operator of MT}-\eqref{def of z of MT}, and for $\gamma,\delta \in \mathbb{R}_{++}$,  $Q_{\delta\leftarrow\gamma}$ be the fixed-point relocator given by \eqref{relocator of MT}. Then, for any nonempty bounded set $S\subseteq\cup_{\gamma\in \Gamma}\bigl(\Fix T_{\gamma}\times\{\gamma\}\bigr)$, there exists $L\geq 0$ such that 
$$ \|\mathcal{\tilde{Q}}_{\delta\leftarrow\gamma}\mathbf{x}-\mathcal{\tilde{Q}}_{\gamma\leftarrow\gamma}\mathbf{x}\| \leq L|\delta-\gamma| \quad\forall\delta\in\Gamma,\forall (\mathbf{x},\gamma)\in S. $$
In other words, Assumption~\ref{assumption: Q is Lipschitz depending on gamma}  holds.
\end{lemma}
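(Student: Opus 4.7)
The plan is to follow the template of Lemma~\ref{lemma: DR relocator Lipschitz} verbatim: reduce the claim to a direct calculation of the componentwise difference $\mathcal{\Tilde{Q}}^i_{\delta\leftarrow\gamma}\mathbf{x}-\mathcal{\Tilde{Q}}^i_{\gamma\leftarrow\gamma}\mathbf{x}$ in the product space $\mathcal{H}^{N-1}$, and then invoke the same uniform-boundedness argument on the resolvent of $A_1$. First, I would observe that setting $\delta=\gamma$ in \eqref{relocator of MT} yields $\mathcal{\Tilde{Q}}^1_{\gamma\leftarrow\gamma}\mathbf{x}=x^1$ and hence $\mathcal{\Tilde{Q}}^i_{\gamma\leftarrow\gamma}\mathbf{x}=(x^i-x^1)+x^1=x^i$ for $i=2,\dots,N-1$, so that $\mathcal{\Tilde{Q}}_{\gamma\leftarrow\gamma}\mathbf{x}=\mathbf{x}$ (in line with Remark~\ref{remark: identity relocator}).

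A direct calculation from \eqref{relocator of MT} then gives
\[
\mathcal{\Tilde{Q}}^1_{\delta\leftarrow\gamma}\mathbf{x}-x^1=\frac{\delta-\gamma}{\gamma}(x^1-J_{\gamma A_1}x^1),
\]
exactly as in the Douglas--Rachford case. For $i\geq 2$, substituting the first-coordinate expression into the definition of $\mathcal{\Tilde{Q}}^i_{\delta\leftarrow\gamma}\mathbf{x}$ and regrouping produces the clean identity $\mathcal{\Tilde{Q}}^i_{\delta\leftarrow\gamma}\mathbf{x}-x^i=\tfrac{\delta-\gamma}{\gamma}(x^i-J_{\gamma A_1}x^1)$. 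Summing over components in the product norm,
\[
\|\mathcal{\Tilde{Q}}_{\delta\leftarrow\gamma}\mathbf{x}-\mathcal{\Tilde{Q}}_{\gamma\leftarrow\gamma}\mathbf{x}\|^2=\frac{(\delta-\gamma)^2}{\gamma^2}\sum_{i=1}^{N-1}\|x^i-J_{\gamma A_1}x^1\|^2.
\]

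It then remains to bound $\tfrac{1}{\gamma}\bigl(\sum_{i=1}^{N-1}\|x^i-J_{\gamma A_1}x^1\|^2\bigr)^{1/2}$ uniformly over $(\mathbf{x},\gamma)\in S$, which reduces the problem to the same estimate that appears at the end of Lemma~\ref{lemma: DR relocator Lipschitz}. Concretely, nonexpansiveness of $J_{\gamma A_1}$ yields $\|J_{\gamma A_1}x^1\|\leq\|J_{\gamma A_1}0\|+\|x^1\|$; compactness of the closed interval $\Gamma\subseteq\mathbb{R}_{++}$ together with continuity of $\gamma\mapsto J_{\gamma A_1}0$ from Remark~\ref{remark: Lipschitz continuity in resolvent} bounds $\sup_{\gamma\in\Gamma}\|J_{\gamma A_1}0\|$; boundedness of $S$ controls each $\|x^i\|$; and $\inf_{\gamma\in\Gamma}\gamma>0$ bounds $1/\gamma$. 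Taking $L$ to be the resulting supremum gives the claimed inequality.

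The only non-routine point is the expansion for $i\geq 2$: the extra term $\tfrac{\delta}{\gamma}(x^i-x^1)$ in the recursive definition of $\mathcal{\Tilde{Q}}^i_{\delta\leftarrow\gamma}$ must combine cleanly with the already-obtained difference for the first coordinate in order to yield the identity above. This is a bookkeeping check rather than a genuine obstacle, and once it is verified the lemma reduces to a direct multi-operator extension of Lemma~\ref{lemma: DR relocator Lipschitz}.
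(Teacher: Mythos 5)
Your proposal is correct and follows essentially the same route as the paper: both compute the componentwise difference explicitly, obtaining $\|\mathcal{\tilde{Q}}^i_{\delta\leftarrow\gamma}\mathbf{x}-\mathcal{\tilde{Q}}^i_{\gamma\leftarrow\gamma}\mathbf{x}\|=\tfrac{|\delta-\gamma|}{\gamma}\|x^i-J_{\gamma A_1}x^1\|$, and then bound $\sup_{(\mathbf{x},\gamma)\in S}\tfrac{\|x^i-J_{\gamma A_1}x^1\|}{\gamma}$ using nonexpansiveness of the resolvent, continuity of $\gamma\mapsto J_{\gamma A_1}0$ on the compact interval $\Gamma$, and boundedness of $S$, exactly as in Lemma~\ref{lemma: DR relocator Lipschitz}.
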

\begin{proof}
For any $(\mathbf{x},\gamma)\in S$ and $\delta\in\Gamma$, similarly to \eqref{eq-NDR: Lipschitz for DR}, we have
\begin{equation}\label{eq-NDR: Lipschitz for MT-1}
        \begin{aligned}
            \|\mathcal{\tilde{Q}}^1_{\delta\leftarrow\gamma}\mathbf{x}-\mathcal{\tilde{Q}}^1_{\gamma\leftarrow\gamma}\mathbf{x}\|
            &=\left\|\frac{\delta}{\gamma}x_1+\left(1-\frac{\delta}{\gamma}\right)J_{\gamma A_1}x^1-x^1\right\|\\
            &=|\delta-\gamma|\frac{\|x^1-J_{\gamma A_1}x^1\|}{\gamma},
        \end{aligned}
    \end{equation}
and for $i=2\dots,N-1$, we have    
\begin{equation}\label{eq-NDR: Lipschitz for MT-i}
        \begin{aligned}
            \|\mathcal{\tilde{Q}}^i_{\delta\leftarrow\gamma}\mathbf{x}-\mathcal{\tilde{Q}}^i_{\gamma\leftarrow\gamma}\mathbf{x}\|
            &=\left\|\frac{\delta}{\gamma}(x^i-x^1)+\frac{\delta}{\gamma}x^1+\left(1-\frac{\delta}{\gamma}\right)J_{\gamma A_1}x^1-x^i\right\|\\
            &=|\delta-\gamma|\frac{\|x^i-J_{\gamma A_1}x^1\|}{\gamma}.
        \end{aligned}
    \end{equation}
To complete the proof, it suffices to show that $\sup_{(\mathbf{x},\gamma)\in S}\frac{\|x^i-J_{\gamma A_1}x^1\|}{\gamma}<+\infty$ for each $i=1,\dots,N-1$. This follows by using a similar arrangement to the one in Lemma~\ref{lemma: DR relocator Lipschitz}.
\end{proof}

The next lemma provides conditions under which the operator $T_\gamma$ in \eqref{operator of MT}-\eqref{def of z of MT} is a contraction.
\begin{lemma}\label{cor: MT operators satisfy contractions}
Let $\Gamma\subseteq\mathbb{R}_{++}$ be a nonempty closed interval and $T_\gamma$ be the operators given in \eqref{operator of MT}-\eqref{def of z of MT}. Suppose that one of the following holds:
\begin{enumerate}[(a)]
    \item $A_{1},\dots,A_{N-1}: \mathcal{H}\rightarrow \mathcal{H}$ are monotone and $L$-Lipschitz, and $A_{N}\colon \mathcal{H}\setto \mathcal{H}$ is maximally $\mu$-strongly monotone. 
    \item $A_{1},\dots,A_{N-1}: \mathcal{H}\rightarrow \mathcal{H}$ are maximally $\mu$-strongly monotone and $L$-Lipschitz, and $A_{N}\colon \mathcal{H}\setto \mathcal{H}$ is maximally monotone.
\end{enumerate}
Then there exists  $\beta\in[0,1)$ such that $(T_\gamma)_{\gamma\in\Gamma}$ is a family of $\beta$-contractions.
\end{lemma}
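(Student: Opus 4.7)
The plan is to mirror the strategy used in Lemma~\ref{lemma:uniform contraction of DR}: reduce the uniform contractivity of the family $(T_\gamma)_{\gamma\in\Gamma}$ to an individual contraction estimate for each $\gamma\in\Gamma$, together with a continuity/compactness argument over $\Gamma$. For a fixed $\gamma\in\Gamma$, observe that under hypothesis (a) the operators $\gamma A_1,\ldots,\gamma A_{N-1}$ are monotone and $(\gamma L)$-Lipschitz, while $\gamma A_N$ is maximally $(\gamma\mu)$-strongly monotone; under hypothesis (b) the roles are analogous with the strong monotonicity/Lipschitz moduli scaled by $\gamma$. Since $T_\gamma$ depends on the $A_i$'s only through the scaled operators $\gamma A_i$, the contraction analysis for the Malitsky--Tam operator with constant stepsize $1$ applied to $(\gamma A_i)_{i=1}^N$ yields a contraction factor $\beta_\gamma\in[0,1)$ for $T_\gamma$.

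For this per-$\gamma$ contractivity, I would invoke the multioperator contraction result established in \cite{simi2025linear}, which is precisely the Malitsky--Tam analogue of Moursi--Vandenberghe's \cite[Theorem~4.3]{moursi2019douglas} that was used in the Douglas--Rachford case. That result provides an explicit (and, crucially for us, continuous in the strong monotonicity and Lipschitz moduli) contraction constant $\beta_\gamma<1$ for $T_\gamma$ whenever the combination of Lipschitz continuity and strong monotonicity of the data operators is as in hypothesis (a) or (b). Plugging in the scaled moduli $\gamma L$ and $\gamma\mu$ in place of $L$ and $\mu$, we obtain $\beta_\gamma$ as a function of $\gamma$ on $\Gamma$.

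It remains to pass from pointwise $\beta_\gamma<1$ to a uniform bound $\beta<1$. The key observation is that the expression for $\beta_\gamma$ supplied by the cited contraction theorem is a continuous function of $\gamma\in\mathbb{R}_{++}$ (it is an algebraic combination of $\gamma$, $\gamma L$, and $\gamma\mu$ that stays strictly below $1$ throughout the positive reals). Since $\Gamma\subseteq\mathbb{R}_{++}$ is a nonempty closed interval and hence compact, the continuous map $\gamma\mapsto\beta_\gamma$ attains its maximum on $\Gamma$, say at $\gamma_\star\in\Gamma$. Setting $\beta:=\beta_{\gamma_\star}=\max_{\gamma\in\Gamma}\beta_\gamma<1$ gives the desired uniform contraction constant, so that $(T_\gamma)_{\gamma\in\Gamma}$ is a family of $\beta$-contractions, which completes the proof.

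The main obstacle in this argument is the per-$\gamma$ contraction estimate itself: verifying that the Malitsky--Tam operator is a contraction under mixed Lipschitz/strong monotonicity assumptions on the $A_i$'s. Unlike the Douglas--Rachford case, where a clean closed-form bound by Moursi--Vandenberghe is available, the $N$-operator splitting requires a more delicate telescoping argument across the resolvent steps in \eqref{def of z of MT}; however, this is precisely what is done in \cite{simi2025linear}, and so for our purposes this step can be invoked as a black box. The continuity/compactness step is then routine, exactly as in the proof of Lemma~\ref{lemma:uniform contraction of DR}.
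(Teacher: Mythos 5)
Your proposal is correct in its core reduction and matches the paper's: replace $(A_i,\gamma)$ by the scaled operators $\gamma A_i$ with stepsize $1$, note the scaled Lipschitz and strong monotonicity moduli, and invoke the multioperator contraction result of \cite{simi2025linear} (the paper cites its Lemma~3.4). Where you diverge is the uniformization over $\Gamma$. You apply the cited result separately for each $\gamma$ to get $\beta_\gamma<1$ and then take $\beta=\max_{\gamma\in\Gamma}\beta_\gamma$ via continuity of $\gamma\mapsto\beta_\gamma$ and compactness of $\Gamma$, mirroring Lemma~\ref{lemma:uniform contraction of DR}; this hinges on the contraction constant in \cite{simi2025linear} being an explicit expression, continuous in the moduli, which you assert as a black box but do not check. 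The paper sidesteps that issue entirely: writing $\Gamma=[\gamma_{\rm low},\gamma_{\rm high}]$, it observes that for every $\gamma\in\Gamma$ the operators $\gamma A_1,\dots,\gamma A_{N-1}$ are $(\gamma_{\rm high}L)$-Lipschitz and the strongly monotone operators are $(\gamma_{\rm low}\mu)$-strongly monotone (weakening the moduli to $\gamma$-independent ones is harmless since a smaller Lipschitz constant and a larger strong monotonicity modulus only strengthen the hypotheses), so a \emph{single} application of the cited lemma with these $\gamma$-free constants yields one $\beta<1$ valid for all $\gamma\in\Gamma$. Your route could in principle give a sharper $\beta$ (the maximum of the per-$\gamma$ constants rather than the worst-case-moduli constant), but it needs the explicit continuous formula; the paper's route is simpler and insensitive to how the constant in \cite{simi2025linear} is stated. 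To make your argument airtight, either verify the continuity of the constant from the cited result or switch to the paper's uniform-moduli observation.
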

\begin{proof}
Since $\Gamma\subseteq\mathbb{R}_{++}$ is a nonempty closed interval, we may write $\Gamma=[\gamma_{\rm low},\gamma_{\rm high}]$ where $0<\gamma_{\rm low}<\gamma_{\rm high}$. Let $\gamma\in\Gamma$ be arbitrary. 
\begin{enumerate}[(a)]
\item Since $A_1,\dots,A_{N-1}$ are $L$-Lipschitz and $A_N$ is $\mu$-strongly monotone, $\gamma A_1,\dots,\gamma A_{N-1}$ are $(\gamma_{\rm high}L)$-Lipschitz and $\gamma A_N$ is $(\gamma_{\rm low}\mu)$-strongly monotone. \item  Since $A_1,\dots,A_{N-1}$ are $\mu$-strongly monotone and $L$-Lipschitz, $\gamma A_1,\dots,\gamma A_{N-1}$ are $(\gamma_{\rm low}\mu)$-strongly monotone and $(\gamma_{\rm high}L)$-Lipschitz. 
\end{enumerate}
In both cases, the strong monotonicity and Lipschitz continuity constants of $\gamma A_1,\dots,\gamma A_N$ can be expressed independently of $\gamma$. Hence the result follows from \cite[Lemma~3.4]{simi2025linear}.
\end{proof}

As mentioned above, the operator in \eqref{relocator of MT} is a fixed-point relocator of $T_\gamma$ in \eqref{operator of MT}-\eqref{def of z of MT}, and a Lipschitz constant is provided in \cite[Proposition 5.5]{atenas2025relocated}. In order to obtain the convergence results in this section, we require a Lipschitz constant in a more convenient form. We obtain this new Lipschitz constant by recalling the following identity, valid in any Hilbert space, 
\begin{equation} \label{eq:identity}
    \|\alpha u+(1-\alpha)v\|^2 = \alpha\|u\|^2+(1-\alpha)\|v\|^2-\alpha(1-\alpha)\|u-v\|^2\quad\forall u,v \in \mathcal{H}, \alpha \in \mathbb{R}.
\end{equation} 
\begin{lemma} \label{l:FPR-Lip}
    For $\delta,\gamma \in \mathbb{R}_{++}$, the operator $\mathcal{\Tilde{Q}}_{\delta\gets\gamma}: \mathcal{H}^{N-1} \to \mathcal{H}^{N-1}$ defined in \eqref{relocator of MT} is Lipschitz continuous with constant 
    \begin{equation*}
        \check{\mathcal{L}}_{\delta\gets\gamma} = \frac{\sqrt{\delta}}{\sqrt{\gamma}}+\frac{\sqrt{|\gamma-\delta|}}{\sqrt{\gamma}}\max\left\{\sqrt{N-1},  \sqrt{2N}\sqrt{\frac{\delta}{\gamma}}\right\}.
    \end{equation*}
\end{lemma}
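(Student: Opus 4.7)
The plan is to first simplify the operator by eliminating the recursion. Substituting the formula for $\mathcal{\Tilde{Q}}^1_{\delta\leftarrow\gamma}\mathbf{x}$ into the definition of $\mathcal{\Tilde{Q}}^i_{\delta\leftarrow\gamma}\mathbf{x}$ for $i\geq 2$ collapses to the uniform expression
$\mathcal{\Tilde{Q}}^i_{\delta\leftarrow\gamma}\mathbf{x} = \tfrac{\delta}{\gamma}x^i + \bigl(1-\tfrac{\delta}{\gamma}\bigr)J_{\gamma A_1}x^1$
for all $i=1,\ldots,N-1$. Equivalently, $\mathcal{\Tilde{Q}}_{\delta\leftarrow\gamma}\mathbf{x} = \tfrac{\delta}{\gamma}\mathbf{x} + (1-\tfrac{\delta}{\gamma})\mathbf{w}(\mathbf{x})$, where $\mathbf{w}(\mathbf{x})\in\mathcal{H}^{N-1}$ denotes the tuple each of whose coordinates equals $J_{\gamma A_1}x^1$. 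Given $\mathbf{x},\mathbf{y}\in\mathcal{H}^{N-1}$, I would set $\mathbf{u}:=\mathbf{x}-\mathbf{y}$ and $\mathbf{V}:=\mathbf{w}(\mathbf{x})-\mathbf{w}(\mathbf{y})$.

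Nonexpansiveness of $J_{\gamma A_1}$ then yields $\|\mathbf{V}\|^2 \leq (N-1)\|x^1-y^1\|^2 \leq (N-1)\|\mathbf{u}\|^2$, and combining this with $\|\mathbf{u}-\mathbf{V}\|^2 \leq 2\|\mathbf{u}\|^2 + 2\|\mathbf{V}\|^2$ gives $\|\mathbf{u}-\mathbf{V}\|^2 \leq 2N\|\mathbf{u}\|^2$. Applying identity \eqref{eq:identity} with $\alpha=\delta/\gamma$ produces
\[
\|\mathcal{\Tilde{Q}}_{\delta\leftarrow\gamma}\mathbf{x} - \mathcal{\Tilde{Q}}_{\delta\leftarrow\gamma}\mathbf{y}\|^2 = \alpha\|\mathbf{u}\|^2 + (1-\alpha)\|\mathbf{V}\|^2 - \alpha(1-\alpha)\|\mathbf{u}-\mathbf{V}\|^2.
\]
The closing step is subadditivity of the square root, $\sqrt{a+b}\leq\sqrt{a}+\sqrt{b}$ for $a,b\geq 0$, used to match the additive shape of $\check{\mathcal{L}}_{\delta\leftarrow\gamma}$, after which the analysis splits by the sign of $1-\alpha$.

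If $\delta\leq\gamma$ then $\alpha\in(0,1]$, so the cross term is nonpositive and can be dropped; using $\|\mathbf{V}\|^2\leq(N-1)\|\mathbf{u}\|^2$ bounds the right-hand side by $(\alpha+(1-\alpha)(N-1))\|\mathbf{u}\|^2$, whose square root is at most $\sqrt{\delta/\gamma} + \sqrt{(\gamma-\delta)/\gamma}\,\sqrt{N-1}$, and this is in turn bounded by $\check{\mathcal{L}}_{\delta\leftarrow\gamma}$ because $\sqrt{N-1} \leq \max\{\sqrt{N-1},\sqrt{2N\delta/\gamma}\}$. If $\delta>\gamma$ then $\alpha>1$, so $(1-\alpha)\|\mathbf{V}\|^2\leq 0$ while $-\alpha(1-\alpha)\|\mathbf{u}-\mathbf{V}\|^2 = \alpha(\alpha-1)\|\mathbf{u}-\mathbf{V}\|^2\geq 0$; dropping the nonpositive term and using $\|\mathbf{u}-\mathbf{V}\|^2\leq 2N\|\mathbf{u}\|^2$ yields $(\alpha + 2N\alpha(\alpha-1))\|\mathbf{u}\|^2$, whose square root is at most $\sqrt{\delta/\gamma} + \sqrt{(\delta-\gamma)/\gamma}\,\sqrt{2N\delta/\gamma}$; since $\sqrt{2N\delta/\gamma} > \sqrt{2N} > \sqrt{N-1}$ whenever $\delta>\gamma$ and $N\geq 2$, this equals $\check{\mathcal{L}}_{\delta\leftarrow\gamma}$.

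The main conceptual obstacle is the asymmetric role of the cross term $-\alpha(1-\alpha)\|\mathbf{u}-\mathbf{V}\|^2$ in \eqref{eq:identity}: it is harmlessly nonpositive when $\delta\leq\gamma$ but strictly positive when $\delta>\gamma$, where it must be estimated by the crude bound $\|\mathbf{u}-\mathbf{V}\|^2\leq 2N\|\mathbf{u}\|^2$. This is precisely what produces the $\sqrt{2N\delta/\gamma}$ branch of the maximum in $\check{\mathcal{L}}_{\delta\leftarrow\gamma}$ and explains why the stated constant naturally takes the form of a maximum.
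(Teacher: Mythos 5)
Your proposal is correct and takes essentially the same route as the paper's proof: the same collapse of \eqref{relocator of MT} to the uniform expression $\frac{\delta}{\gamma}x^i+(1-\frac{\delta}{\gamma})J_{\gamma A_1}x^1$, the same use of identity \eqref{eq:identity} (you apply it once in the product space $\mathcal{H}^{N-1}$ instead of coordinatewise and summing, which is the same computation), the same case split on the sign of $1-\delta/\gamma$ with the bounds $\|\mathbf{V}\|^2\leq (N-1)\|\mathbf{u}\|^2$ and $\|\mathbf{u}-\mathbf{V}\|^2\leq 2N\|\mathbf{u}\|^2$, and the same closing use of $\sqrt{a+b}\leq\sqrt{a}+\sqrt{b}$ to reach $\check{\mathcal{L}}_{\delta\gets\gamma}$.
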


\begin{proof}
    Let $\mathbf{x}=(x^1,\dots,x^{N-1}), \mathbf{w}=(w^1,\dots,w^{N-1}) \in \mathcal{H}^{N-1}$. Substituting the first line of \eqref{relocator of MT} into its second line yields 
\begin{equation*}\mathcal{\Tilde{Q}}^i_{\delta\leftarrow\gamma} \mathbf{x}=\frac{\delta}{\gamma}x^i+ \left( 1 - \frac{\delta}{\gamma} \right) J_{\gamma A_1}x_1 \text{~for~}i=1,\dots,N-1.
\end{equation*}
Thus, from \eqref{eq:identity}, it follows that
\begin{align*}    \|\mathcal{\Tilde{Q}}_{\delta\gets\gamma}\mathbf{x}-\mathcal{\Tilde{Q}}_{\delta\gets\gamma}\mathbf{w}\|^2
    &= \sum_{i=1}^{N-1} \|\mathcal{\Tilde{Q}}_{\delta\gets\gamma}^i\mathbf{x}-\mathcal{\Tilde{Q}}_{\delta\gets\gamma}^i\mathbf{w}\|^2 \\ & =
    \frac{\delta}{\gamma}\|\mathbf{x}-\mathbf{w}\|^2+\left( 1 - \frac{\delta}{\gamma} \right)(N-1)\|J_{\gamma A_1}x^1-J_{\gamma A_1}w^1\|^2 \\
    &\qquad - \frac{\delta}{\gamma}\left( 1 - \frac{\delta}{\gamma} \right)\sum_{i=1}^{N-1}\|(x^i-w^i)-(J_{\gamma A_1}x^1-J_{\gamma A_1}w^1)\|^2.
\end{align*} 
We distinguish two cases. If $\delta\leq\gamma$, then $1-\frac{\delta}{\gamma}\geq 0$ and so
\begin{align*}
    \|\mathcal{\Tilde{Q}}_{\delta\gets\gamma}\mathbf{x}-\mathcal{\Tilde{Q}}_{\delta\gets\gamma}\mathbf{w}\|^2
    &\leq \frac{\delta}{\gamma}\|\mathbf{x}-\mathbf{w}\|^2+\left( 1 - \frac{\delta}{\gamma} \right)(N-1)\|x^1-w^1\|^2\\
    &\leq \frac{\delta}{\gamma}\|\mathbf{x}-\mathbf{w}\|^2+\left( 1 - \frac{\delta}{\gamma} \right)(N-1)\|\mathbf{x}-\mathbf{w}\|^2\\
    &= \left(\frac{\delta}{\gamma}+\frac{(N-1)|\gamma-\delta|}{\gamma} \right)\|\mathbf{x}-\mathbf{w}\|^2.
\end{align*}
Else, suppose that $\delta>\gamma$. First note that, by nonexpansiveness of $J_{\gamma A_1}$, we have
$$ \|J_{\gamma A_1}x^1-J_{\gamma A_1}w^1\|^2 \leq \|x^1-w^1\|^2$$
and
\begin{align*}
    \|(x^i-w^i)-(J_{\gamma A_1}x^1-J_{\gamma A_1}w^1)\|^2
    & \leq 2\|x^i-w^i \| ^2 + 2\|J_{\gamma A_1}x^1-J_{\gamma A_1}w^1\|^2 \\
    &\leq 2\|x^i-w^i\|^2 + 2\|x^1-w^1\|^2.
\end{align*}
Thus, since $\frac{\delta}{\gamma}-1>0$, we have
\begin{align*}
    \|\mathcal{\Tilde{Q}}_{\delta\gets\gamma}\mathbf{x}-\mathcal{\Tilde{Q}}_{\delta\gets\gamma}\mathbf{w}\|^2
    &= \frac{\delta}{\gamma}\|\mathbf{x}-\mathbf{w}\|^2-\left(\frac{\delta}{\gamma}-1\right)(N-1)\|J_{\gamma A_1}x^1-J_{\gamma A_1}w^1\|^2 \\
    &\qquad + \frac{\delta}{\gamma}\left(\frac{\delta}{\gamma}-1\right)\sum_{i=1}^{N-1}\|(x^i-w^i)-(J_{\gamma A_1}x^1-J_{\gamma A_1}w^1)\|^2\\
    &\leq \frac{\delta}{\gamma}\|\mathbf{x}-\mathbf{w}\|^2 + \frac{\delta}{\gamma}\left(\frac{\delta}{\gamma}-1\right)\left(2\|\mathbf{x}-\mathbf{w}\|^2+2(N-1)\|\mathbf{x}-\mathbf{w}\|^2\right)\\
    &= \left(\frac{\delta}{\gamma}+2N\frac{\delta}{\gamma}\frac{|\delta-\gamma|}{\gamma}\right)\|\mathbf{x}-\mathbf{w}\|^2.
\end{align*}
Altogether, we get \begin{equation*}
        \|\mathcal{\Tilde{Q}}_{\delta\gets\gamma}\mathbf{x}-\mathcal{\Tilde{Q}}_{\delta\gets\gamma}\mathbf{w}\|\leq \hat{\mathcal{L}}_{\delta\gets\gamma}\|\mathbf{x}-\mathbf{w}\|,
    \end{equation*} where \begin{equation*}
        \hat{\mathcal{L}}_{\delta\gets\gamma}=\max\left\{ \sqrt{\frac{\delta}{\gamma} + \frac{(N-1)|\gamma-\delta|}{\gamma}}, \sqrt{\frac{\delta}{\gamma} + 2N \frac{\delta}{\gamma}\frac{|\gamma-\delta|}{\gamma}}\right\}.
    \end{equation*} A direct calculation using the fact that for all $\alpha,\beta \in \mathbb{R}_{++}$, $\sqrt{\alpha + \beta} \leq \sqrt{\alpha} + \sqrt{\beta} $ yields $\hat{\mathcal{L}}_{\delta\gets\gamma} \leq \check{\mathcal{L}}_{\delta\gets\gamma}$, from where we conclude the result.
\end{proof}

\begin{remark}
    It is clear from the proof in Lemma~\ref{l:FPR-Lip} above that $\hat{\mathcal{L}}_{\delta\gets\gamma}$ provides a tighter bound for the Lipschitz constant of the fixed-point relocator. Nevertheless, as we shall see in the next result, the Lipschitz constant $\check{\mathcal{L}}_{\delta\gets\gamma}$ provides a simpler convergence analysis in our setting.
\end{remark}

Now we are ready to prove the main result regarding linear convergence of~Algorithm~\ref{a:reloc-MT}.

\begin{corollary}[Rate of convergence of relocated Malitsky--Tam algorithm]
Let $\Gamma\subseteq\mathbb{R}_{++}$ be a nonempty closed interval. Suppose one of the following holds:
 \begin{enumerate}[(a)]
     \item \label{condition 1} $A_1,\dots,A_{N-1}:\mathcal{H}\rightarrow\mathcal{H}$ are monotone and $L$-Lipschitz, and $A_N:\mathcal{H}\setto\mathcal{H}$ is maximally $\mu$-strongly monotone.
     \item \label{condition 2}  $A_1,\dots,A_{N-1}:\mathcal{H}\rightarrow\mathcal{H}$ are maximally $\mu$-strongly monotone and $L$-Lipschitz, and $A_N:\mathcal{H}\setto\mathcal{H}$ is maximally monotone.
 \end{enumerate}
  Let $\zer(\sum_{i=1}^NA_i)\neq\emptyset$, $(\gamma_n)_{n\in\mathbb{N}}\subseteq\Gamma$ converge $R$-linearly to $\gamma^*\in\Gamma$, and $T_\gamma$ be the operator given in \eqref{operator of MT}-\eqref{def of z of MT}. Given $\mathbf{x}_0\in\mathcal{H}^{N-1}$, generate  sequences $(\mathbf{x}_n)_{n\in\mathbb{N}},(\mathbf{w}_n)_{n\in\mathbb{N}}\subset\mathcal{H}^{N-1}$ and $(\mathbf{z}_n)_{n\in\mathbb{N}}\subset\mathcal{H}^N$ according to Algorithm~\ref{a:reloc-MT}. Then the following assertions hold:
\begin{enumerate}[(i)]
    \item \label{cor:MT fixed-point-convergence for contraction} $(\mathbf{x}_n)_{n\in\mathbb{N}}$ and $(\mathbf{w}_n)_{n\in\mathbb{N}}$ converge  $R$-linearly to the same point 
    
    $\mathbf{x}=(x^1,\dots,x^{N-1})\in\Fix T_{\gamma^*}$.
    \item \label{cor:MT primal-convergence for contraction} $(\mathbf{z}_n)_{n\in\mathbb{N}}$ converge $R$-linearly to a point $(z,\dots,z)\in\mathcal{H}^N$ with $z := J_{\gamma^*A_1}x^1\in\zer(\sum_{i=1}^N A_i)$.
    \end{enumerate}
\end{corollary}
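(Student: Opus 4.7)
The plan is to invoke Corollary~\ref{cor: linear convergence for contraction} applied to the family $(T_\gamma)_{\gamma\in\Gamma}$ from \eqref{operator of MT}-\eqref{def of z of MT} with relocators \eqref{relocator of MT}, so the first task is to verify each of its hypotheses. The contractivity of the family (with some uniform constant $\beta\in[0,1)$) follows directly from Lemma~\ref{cor: MT operators satisfy contractions}, whose hypotheses coincide with those of the present corollary. Assumption~\ref{assumption: Q is Lipschitz depending on gamma} is already established in Lemma~\ref{lemma: MT relocator Lipschitz}. Continuity of $\mathcal{H}^{N-1}\times\Gamma\to\mathcal{H}^{N-1}\colon(\mathbf{x},\gamma)\mapsto T_\gamma\mathbf{x}$ can be verified by an induction on the recursive definition \eqref{def of z of MT} combined with Remark~\ref{remark: Lipschitz continuity in resolvent}. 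Finally, since $\zer(\sum_{i=1}^N A_i)\neq\emptyset$, the equivalence \eqref{eq:MT-Fix-zer-1} delivers $\Fix T_\gamma\neq\emptyset$ for every $\gamma\in\Gamma$.

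The main technical obstacle is checking the summability condition on the relocator Lipschitz constants. Using the bound $\check{\mathcal{L}}_{\delta\gets\gamma}$ from Lemma~\ref{l:FPR-Lip} and taking $\max\{1,\check{\mathcal{L}}_{\gamma_{n+1}\gets\gamma_n}\}$ as the Lipschitz constant in $[1,+\infty)$, we estimate
\begin{equation*}
    \max\{1,\check{\mathcal{L}}_{\gamma_{n+1}\gets\gamma_n}\}-1 \leq \left|\sqrt{\gamma_{n+1}/\gamma_n}-1\right| + \frac{\sqrt{|\gamma_{n+1}-\gamma_n|}}{\sqrt{\gamma_n}}\max\left\{\sqrt{N-1},\,\sqrt{2N}\sqrt{\gamma_{n+1}/\gamma_n}\right\}.
\end{equation*}
Since $\Gamma$ is a compact subinterval of $\mathbb{R}_{++}$, the ratio $\gamma_{n+1}/\gamma_n$ is bounded uniformly in $n$, so the factor multiplying $\sqrt{|\gamma_{n+1}-\gamma_n|}$ is uniformly bounded. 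The identity $\sqrt{a}-\sqrt{b}=(a-b)/(\sqrt{a}+\sqrt{b})$ gives $\left|\sqrt{\gamma_{n+1}/\gamma_n}-1\right|=O(|\gamma_{n+1}-\gamma_n|)$. The $R$-linear convergence of $(\gamma_n)$ to $\gamma^*$ provides $C>0$ and $r\in(0,1)$ with $|\gamma_{n+1}-\gamma_n|\leq C r^n$, so both $|\gamma_{n+1}-\gamma_n|$ and $\sqrt{|\gamma_{n+1}-\gamma_n|}\leq\sqrt{C}(\sqrt{r})^n$ are summable, as required.

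Once all hypotheses are checked, Corollary~\ref{cor: linear convergence for contraction} yields $R$-linear convergence of $(\mathbf{x}_n)_{n\in\mathbb{N}}$ and $(T_{\gamma_n}\mathbf{x}_n)_{n\in\mathbb{N}}=(\mathbf{w}_n)_{n\in\mathbb{N}}$ to a common limit $\mathbf{x}=(x^1,\dots,x^{N-1})\in\Fix T_{\gamma^*}$, establishing~(\ref{cor:MT fixed-point-convergence for contraction}). For~(\ref{cor:MT primal-convergence for contraction}), the $R$-linear convergence of $z_n^1=J_{\gamma_n A_1}x_n^1$ to $z:=J_{\gamma^* A_1}x^1$ follows by applying the local Lipschitz estimate of Remark~\ref{remark: Lipschitz continuity in resolvent} on the compact set $\{(x_n^1,\gamma_n):n\in\mathbb{N}\}\cup\{(x^1,\gamma^*)\}$, exactly as in the proof of Corollary~\ref{cor: DR error bound}(\ref{cor:primal-convergence}). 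An analogous induction on $i$ propagates $R$-linear convergence through each successive resolvent in \eqref{def of z of MT}, giving limits $\bar z^2,\dots,\bar z^N$. Finally, the fixed-point characterization \eqref{eq:MT-Fix-zer-2} applied to $\mathbf{x}$ forces $\bar z^i=z$ for all $i$ and yields $z\in\zer(\sum_{i=1}^N A_i)$, completing the proof.
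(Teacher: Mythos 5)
Your proposal is correct and follows essentially the same route as the paper: verify the hypotheses of Corollary~\ref{cor: linear convergence for contraction} via Lemma~\ref{cor: MT operators satisfy contractions}, Lemma~\ref{lemma: MT relocator Lipschitz}, continuity of $(\mathbf{x},\gamma)\mapsto T_\gamma\mathbf{x}$, nonemptiness of $\Fix T_\gamma$ from \eqref{eq:MT-Fix-zer-1}, and summability of $\check{\mathcal{L}}_{\gamma_{n+1}\gets\gamma_n}-1$ using the $\sqrt{|\gamma_{n+1}-\gamma_n|}=O(\sqrt{r}^{\,n})$ estimate from Lemma~\ref{l:FPR-Lip}, then propagate $R$-linear convergence through the resolvents in \eqref{def of z of MT} and identify the common limit via \eqref{eq:MT-Fix-zer-2}. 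The only cosmetic difference is that for $(z_n^1)$ you rely on the identity $z_n^1=J_{\gamma_n A_1}x_n^1$ (implicit in the reformulation yielding Algorithm~\ref{a:reloc-MT}), whereas the paper bounds $\|z_{n+1}^1-z\|$ directly through $z_{n+1}^1=J_{\gamma_n A_1}w_n^1$; both are valid.
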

\begin{proof}
    By Lemma~\ref{cor: MT operators satisfy contractions}, there exists $\beta\in[0,1)$ such that $(T_\gamma)_{\gamma\in\Gamma}$ is a family of $\beta$-contractions. By Lemma~\ref{lemma: MT relocator Lipschitz}, Assumption~\ref{assumption: Q is Lipschitz depending on gamma} holds. Since $(x,\gamma)\mapsto J_{\gamma A_i}x$ is continuous on $\mathcal{H}\times\mathbb{R}_{++}$ by Remark~\ref{remark: Lipschitz continuity in resolvent}, it follows that $(x,\gamma)\mapsto T_\gamma x$ is continuous, by algebra and composition of continuous operators. 
    Next, since $(\gamma_n)_{n\in\mathbb{N}}$ converges $R$-linearly to $\gamma^*$, then there exist $C\in\mathbb{R}_{+}$, $r\in[0,1)$ and $\gamma_{\rm high}, \gamma_{\rm low} \in \mathbb{R}_{++}$, such that, for all $n\in\mathbb{N}$, $|\gamma_n - \gamma^*| \leq C r^n$, and $\gamma_{\rm low} \leq \gamma_n \leq \gamma_{\rm high}$. In view of Lemma~\ref{l:FPR-Lip}, for all $n \in \mathbb{N}$, the Lipschitz constant $\check{\mathcal{L}}_{\gamma_{n+1} \gets \gamma_n}$ of the fixed-point relocator $\mathcal{\Tilde{Q}}_{\gamma_{n+1}\gets\gamma_n}$ satisfies 
    \begin{align*}
        \check{\mathcal{L}}_{\gamma_{n+1} \gets \gamma_n} - 1  &=  \frac{\sqrt{\gamma_{n+1}}}{\sqrt{\gamma_n}}-1+\frac{\sqrt{|\gamma_{n+1}-\gamma_n|}}{\sqrt{\gamma_n}}\max\{\sqrt{N-1},\sqrt{2N}\sqrt{\frac{\gamma_{n+1}}{\gamma_n}}\}\\
        &\leq \frac{|\sqrt{\gamma_{n+1}}-\sqrt{\gamma_n}|}{\sqrt{\gamma_{\rm low}}} +\frac{\sqrt{|\gamma_{n+1}-\gamma_n|}}{\sqrt{\gamma_{\rm low}}}\max\{\sqrt{N-1},\sqrt{2N}\sqrt{\frac{\gamma_{\rm high}}{\gamma_{\rm low}}}\}
    \end{align*}
 Then, in view of \eqref{eq:rate-diff}, we have for all $n \in \mathbb{N}$,
 \begin{equation*}
        |\sqrt{\gamma_{n+1}} - \sqrt{\gamma_n}| = \frac{|{\gamma_{n+1}} - {\gamma_n}|}{\sqrt{\gamma_{n+1}} + \sqrt{\gamma_n}}
        \leq \frac{|{\gamma_{n+1}} - {\gamma_n}|}{2\sqrt{\gamma_{\rm low}}} \leq \frac{C(1+r)r^n}{2\sqrt{\gamma_{\rm low}}}.
    \end{equation*}
    Altogether, as $r \in (0,1)$,  there exists $M >0$ such that  \begin{equation*}
        \sum_{n=0}^\infty (\check{\mathcal{L}}_{\gamma_{n+1}\gets\gamma_n} -1) \leq M \sum_{n=0}^\infty\sqrt{r}^n = \frac{M}{1-\sqrt{r}} < +\infty. 
    \end{equation*}
\begin{enumerate}
    \item[(\ref{cor:MT fixed-point-convergence for contraction})] Since $\zer\bigl(\sum_{i=1}^NA_i\bigr)\neq\emptyset$, then $\Fix T_{\gamma}\neq\emptyset$ for all $\gamma\in\Gamma$ by \eqref{eq:MT-Fix-zer-1}. 
    From Corollary~\ref{cor: linear convergence for contraction}(ii), it follows that 
    $(x_n^i)_{n\in\mathbb{N}}$ and $(w_n^i)_{n\in\mathbb{N}}$ converge $R$-linearly to the same point $x^i$, for all $i=1,\dots, N-1$.
     \item[(\ref{cor:MT primal-convergence for contraction})] Denote \begin{equation} \label{eq:common-limit}
        z:=J_{\gamma^*A_1}(x^1)=J_{\gamma^*A_i}(z+x^i-x^{i-1})=J_{\gamma^*A_N}(2z-x^{N-1}) \text{~for~}i=2,\dots,N-1,
    \end{equation} 
    which is justified by \eqref{eq:MT-Fix-zer-2}. 
    
    Define 
    $$ S_i := \begin{cases}
    \{(w_n^1,\gamma_n):n\in\mathbb{N}\}\cup\{(x^1,\gamma^*)\} & i=1\\
    \{(z_{n}^{i-1}+x_n^i-x_n^{i-1},\gamma_n):n\in\mathbb{N}\}\cup\{(z+x^i-x^{i-1},\gamma^*):n\in\mathbb{N}\} &i=2,\dots,N-1\\
    \{(z_{n}^1+z_{n}^{N-1}-x^{N-1},\gamma_n)\}\cup\{(2z^1-x^{N-1},\gamma^*)\} & i=N.
              \end{cases} $$
    
    Since $(x_n)_{n\in\mathbb{N}}$ and $(w_n)_{n\in\mathbb{N}}$ converge weakly to $x^i$, and $(z_n)_{n\in\mathbb{N}}$ converges weakly to $z$, by \cite[Theorem~2.38]{aliprantis2006infinite}, each set $S_i$ is compact.  
    Hence, by Remark~\ref{remark: Lipschitz continuity in resolvent}, there exists $L_i>0$ such that $(y^i,\gamma)\mapsto J_{\gamma A_i}y^i$ is $L_i$-Lipschitz on $S_i$. Thus,  together with the first line in \eqref{DR:variable-stepsize-N}, \eqref{eq:common-limit}, the triangle inequality and nonexpansiveness of $J_{\gamma_nA_1}$, we obtain
    \begin{align*}
    \|z^1_{n+1}-z\| 
    &\leq \|J_{\gamma_nA_1}w^1_n-J_{\gamma_nA_1}x^1\|+\|J_{\gamma_nA_1}x^1-J_{\gamma^*A_1}x^1\|\\
    &\leq \|w^1_n-x^1\| + L_1|\gamma_n-\gamma^*|,
    \end{align*}
    from which $R$-linear convergence of $(z_n^1)_{n\in\mathbb{N}}$ to $z$ follows. Similarly, for $i=2,\dots,N-1$, we have
    \begin{align*}
    \|z^i_n-z\|
    &\leq \|J_{\gamma_nA_i}(z_n^{i-1}+x_{n}^i-x_n^{i-1})-J_{\gamma_nA_i}(z+x^i-x^{i-1})\|\\
    &\qquad  +\|J_{\gamma_nA_i}(z+x^i-x^{i-1})-J_{\gamma^*A_i}(z+x^i-x^{i-1})\|\\
    &\leq \|(z_n^{i-1}+x_{n}^i-x_n^{i-1})-(z+x^i-x^{i-1})\| + L_i|\gamma_n-\gamma^*| \\
    &\leq \|z_n^{i-1}-z\|+\|x_{n}^i-x^i\|+\|x_n^{i-1}-x^{i-1}\| + L_i|\gamma_n-\gamma^*|,
    \end{align*}
    from which $R$-linear convergence of $(z_n^i)_{n\in\mathbb{N}}$ to $z$ follows inductively. Finally, 
    \begin{align*}
        \|z^N_n-z\| & \leq \|J_{\gamma_nA_N}(z_n^1+z_n^{N-1}-x_n^{N-1})-J_{\gamma_nA_N}(2z-x^{N-1})\|\\
        &\qquad + \|J_{\gamma_nA_N}(2z-x^{N-1})-J_{\gamma^*A_N}(2z-x^{N-1})\| \\
        &\leq \|z_n^1-z\| + \|z_n^{N-1}-z\| + \|x_n^{N-1}-x^{N-1}\| + L_N|\gamma_n-\gamma^*|,
    \end{align*}
    from which $R$-linear convergence of $(z_n^N)_{n\in\mathbb{N}}$ to $z$ follows. This completes the proof.
\end{enumerate}

\end{proof}

\section{Conclusion}\label{s:conclusion}
In this work, we established linear rate of convergence of relocated fixed-point iterations, introduced in \cite{atenas2025relocated}, when the family of algorithmic operators $(T_{\gamma})_{\gamma\in\Gamma}$ satisfies a linear error bound. In particular, the framework applies in the setting where $(T_{\gamma})_{\gamma\in\Gamma}$ are contractions. We then applied our result to derive linear convergence of variable stepsize resolvent splittings for solving monotone inclusions namely, to the relocated Douglas--Rachford algorithm and relocated resolvent splitting algorithm due to Malitsky and Tam under the assumptions of Lipschitz continuity and strong monotonicity.

A direction for future research is to investigate whether the relocated framework can be extended to the more general setting of bounded H\"older regularity~\cite[Definition~2.7]{borwein2017convergence} which generalizes the notion of bounded linear regularity. Such an extension would allow for the analysis of operators that satisfy only H\"older-type error bounds and may lead to sublinear or linear convergence rates for relocated iterations. Another promising direction is to study adaptive stepsize rules for $\gamma_n$ that still guarantee linear convergence while improving practical performance, for instance, by shrinking the spectral radius of the algorithmic operator as a function of the stepsize (akin to \cite{lorenz2019non}). Also, it would be valuable to investigate linear rate of convergence by applying the relocated approach to the framework of resolvent splitting introduced in \cite{tam2024frugal}.

\section*{Acknowledgments}
The research of FA and MKT was supported in part by Australian Research Council grant DP230101749. The research of FAS was supported in part by Research Training Program Scholarship from the Australian Commonwealth Government and the University of Melbourne.

\section*{Data availability}Data sharing not applicable to this article as no data sets were generated or analyzed during the current study.

\section*{Conflict of Interest}
The authors declare that they have no relevant financial or non-financial interests related to the content of this article.

\section*{Contributions}All the authors contributed substantially in this work.

\bibliographystyle{splncs04}
\bibliography{reference}
\end{document}